\newtheorem{theorem}{Theorem}[section]
\newtheorem{corollary}{Corollary}[section]
\newtheorem{lemma}{Lemma}[section]
\newtheorem{remark}{Remark}[section]
\numberwithin{equation}{section}
\newcommand{\mc}{\mathcal}
\newcommand{\mb}{\mathbb}
\newcommand{\mf}{\mathfrak}
\newcommand{\R}{\mathbb{R}}
\newcommand{\wt}{\widetilde}
\newcommand{\be}{\beta}
\newcommand{\ze}{\zeta}
\newcommand{\va}{\varepsilon}
\newcommand{\al}{\alpha}
\newcommand{\pa}{\partial}
\newcommand{\Om}{\Omega}
\newcommand{\lm}{\lambda}
\newcommand{\sg}{\sigma}
\newcommand{\ga}{\gamma}
\newcommand{\de}{\delta}
\newcommand{\De}{\Delta}
\newcommand{\na}{\nabla}
\newcommand{\te}{\theta}
\newcommand{\var}{\vartheta}
\date{} 
\begin{document}

\title{Uniform Boundary Estimates in  Homogenization of Higher Order Elliptic Systems  }
\author{Weisheng Niu\thanks{ Corresponding author.
Supported in part by the NSF of China (11701002)
and Anhui Province (1708085MA02).} \quad \quad
 Yao Xu \thanks{Contributes equally as the first author} }
\maketitle
\pagestyle{plain}
\begin{abstract}
This paper focuses on the uniform boundary estimates in  homogenization of a family of higher order elliptic operators $\mc{L}_\va$, with rapidly oscillating periodic coefficients. We derive uniform boundary $C^{m-1,\lm} (0\!<\!\lm\!<\!1)$, $ W^{m,p}$ estimates in $C^1$ domains, as well as  uniform boundary $C^{m-1,1}$ estimate in $C^{1,\te} (0\!<\!\te\!<\!1)$ domains without the symmetry assumption on the operator. The proof, motivated by the profound work ``S.N. Armstrong and C.~K. Smart, Ann. Sci. \'Ec. Norm. Sup\'er. (2016),   Z. Shen, Anal. PDE (2017)'', is based on a suboptimal convergence rate in $H^{m-1}(\Om)$. Compared to ``C.E. Kenig, F. Lin  and Z. Shen, Arch. Ration. Mech. Anal. (2012), Z. Shen, Anal. PDE (2017)", the convergence rate obtained here does not require the symmetry assumption on the operator, nor additional assumptions on the regularity of $u_0$ (the solution to the homogenized problem), and thus might be of some independent interests even for second order elliptic systems.
\end{abstract}


\section{Introduction}
This paper is aimed to investigate the uniform boundary estimates in homogenization of the following $2m$-order elliptic system,
\begin{equation} \label{eq1}
 \begin{cases}
 \mc{L}_\varepsilon u_\varepsilon =f  &\text{ in } \Omega,  \vspace{0.1cm}\\
 Tr (D^\ga u_\varepsilon)=g_\ga  & \text{ on } \partial\Omega \quad\text{for  } 0\leq|\ga|\leq m-1,
\end{cases}
\end{equation}
where $\Om \subset \R^d, d\geq1,$ is a bounded Lipschitz domain, $$ (\mc{L}_\varepsilon u_\varepsilon )_i  = (-1)^{m}\sum_{|\alpha|=|\beta|=m} D^\alpha (A_{ij}^{\alpha \beta}(x/\varepsilon )D^\beta u_{\varepsilon j}), ~~   1\leq i, j\leq  n,$$
$ u_{\varepsilon j}$ denotes the $j$-th component of the $\mb{R}^n$-valued vector function $u_\varepsilon$, $\alpha, \beta,\ga$ are multi indexes with components $\alpha_k, \beta_k,\ga_k, k=1,2,...,d$, and
$$ |\alpha|=\sum_{k=1}^d \alpha_k, ~~D^\alpha=D_{x_1}^{\alpha_1} D_{x_2}^{\alpha_2}\cdot\cdot\cdot D_{x_d}^{\alpha_d}.  $$
The coefficients matrix $A(y)=(A_{ij}^{\alpha \beta}(y)), 1 \leq i, j \leq n,$  is real, bounded measurable, satisfying the strong ellipticity condition
\begin{align}\label{cod1}
 \mu |\xi|^2\leq \sum_{|\al|=|\be|=m} A_{ij}^{\al\be}(y)\xi^i_\al\xi^j_\be\leq \frac{1}{\mu} |\xi|^2  \quad\text{for }\, a.e.\,  y\in \R^{d},
\end{align}
where  $\mu>0, \xi=(\xi_\al)_{|\al|=m}, \xi_\al=(\xi^1_\al,...,\xi_\al^n)\in \R^n$, as well as the  periodicity condition
\begin{align}\label{cod3}
A(y+z)=A(y), \quad \text{for any } z\in \mathbb{Z}^{d} \text{ and }\, a.e.\, y\in \mathbb{R}^{d}.
\end{align}

The regularity estimate uniform in $\va>0$ is one of the main concerns in quantitative homogenization.  For second order elliptic operators, this issue has been studied extensively.  In the celebrated work of M. Avellaneda and F. Lin \cite{al87, al89, al91}, by using a compactness method, the
  interior and boundary H\"older estimate, $W^{1,p}$ estimate  and Lipschitz
  estimate were obtained for second order  elliptic systems with H\"{o}lder continuous coefficients and Dirichlet conditions in bounded $C^{1,\te}$ domains.
  The uniform boundary Lipschitz estimate for the Neumann problem has been a longstanding open problem, and was recently settled  by C. Kenig, F. Lin and Z. Shen in \cite{klsa1}.  Interested readers may refer to \cite{gsh, shenapde2015,kpar2015, shen2016approximate} and references therein for more applications of compactness method in quantitative homogenization.
  More recently, another fabulous scheme, which is based on convergence rates, was formulated in \cite{armstrongan2016} to investigate uniform  (interior) estimates in stochastic homogenization. This approach was further developed in \cite{armstrongcpam2016, shenan2017} for second order elliptic systems with periodic and almost periodic coefficients. Using this method, the large scale  interior or boundary  Lipschitz estimates for second order elliptic operators were studied  \cite{armstrongan2016, armstrongcpam2016, shenan2017}, see also \cite{Gloria2015, gloria2014regularity, armstrongar2016, zhuge2017, armstrong2017quantitative} for more related results.

Relatively speaking,  few quantitative results were known in the homogenization of higher order  elliptic equations previously,
 although results on qualitative homogenization have been obtained for many years \cite{lions1978}. Very recently, the optimal $O(\va)$ convergence rate in the $L^2(\R^d)$
for higher order  elliptic equations was obtained in \cite{ks,pastukhova2016, pastukhova2017}. In \cite{Suslina2017-D, Suslina2017-N}, some interesting two-parameter resolvent estimates were established in homogenization of general higher order elliptic systems with periodic coefficients in
bounded $C^{2m}$ domains.  Meanwhile, in \cite{nsx} we investigated the sharp $O(\va)$ convergence rate in Lipschitz domains. Under the assumptions that $A$ is symmetric and $u_0\!\in\!H^{m+1}(\Om)$,  the optimal $O(\va)$ convergence rate was obtained in $W^{m-1, q_0}(\Om), q_0=2d/(d-1)$. The uniform interior $W^{m,p}$ and  $C^{m-1,1}$ estimates were also established.

As a continuation of \cite{nsx}, in this paper we investigate the uniform boundary estimates in the homogenization of higher order elliptic systems.
Let $\psi: \R^{d-1}\rightarrow \R$ be a $C^1$ function with
\begin{align}\label{c1c}
\begin{split}
 &\psi(0)=0, \quad|\na\psi|\leq M,\\
 &\sup\Big\{ |\na \psi(x')-\na \psi(y') |: x',y'\in\R^{d-1} \text{ and } |x'-y'|\leq t \Big\}\leq \tau(t),
  \end{split}
\end{align}
where $\tau(t)\longrightarrow 0$ as $ t\longrightarrow 0^+.$
Set
\begin{align}\label{drder}
\begin{split}
&D_r=D(r,\psi)=\Big\{ (x',x_d)\in \mb{R}^d: |x'|<r \text{ and } \psi(x')<x_d<\psi(x')+r\Big\},\\
&\De_r=\De(r,\psi)=\Big\{ (x',\psi(x'))\in \mb{R}^d: |x'|<r \Big\}.
\end{split}
\end{align}
The main results of this paper can be stated as follows.
\begin{theorem}\label{thholder}
Suppose that the coefficient matrix $A=A(y)$ satisfies the conditions  (\ref{cod1})--(\ref{cod3}) and $u_\va \in H^{m}(D_1; \R^n)$ is a weak solution to
 \begin{equation*}
\begin{cases}
 \mc{L}_\va u_\va=F     &\text{ in } D_1,   \\
 Tr (D^\ga u_\va)=D^\ga G  &\text{ on }  \De_1 \quad\text{for  } 0\leq|\ga|\leq m-1,
 \end{cases}\end{equation*}
where $G\in C^{m-1,1}(D_1; \R^n),  F\in L^p(D_1; \R^n)$ with $p>\max\big\{d/(m+1),  2d/(d+2m-2), 1\big\}$. Then for any $0<\lm <\min\{m+1-d/p, 1\}$ and any  $\va\leq r < 1,$
\begin{align}\label{tholdere1}
\Big(\fint_{D_r}|\na^m u_\va|^2\Big)^{1/2}\leq C r^{\lm-1}\bigg\{&\Big(\fint_{D_1}|u_\va|^2 \Big)^{1/2}+\Big(\fint_{D_1} |F|^p\Big)^{1/p} + \| G\|_{C^{m-1,1}(D_1)} \bigg\},
\end{align}
where $C$ depends only on $d,n,m, \lm,\mu,p$ and $\tau(t)$ in (\ref{c1c}).
\end{theorem}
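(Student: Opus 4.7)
The plan is to adapt the Armstrong--Smart--Shen approximation scheme (as in \cite{armstrongan2016, shenan2017}) to the higher-order Dirichlet boundary setting, using a suboptimal convergence rate in $H^{m-1}$ as the key quantitative input. The philosophy is that for $r \geq \va$, the oscillating solution $u_\va$ restricted to the scale-$r$ window $D_r$ should closely mimic a solution $u_0$ of the constant-coefficient homogenized operator $\mc{L}_0$ with the same boundary data; since $u_0$ enjoys classical boundary $C^{m-1,\lm}$ regularity in $C^1$ domains, this closeness can be propagated from the unit scale down to the microscopic scale~$\va$.

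After rescaling $x \mapsto x/r$ and subtracting a suitable extension of the Dirichlet data, I would reduce to $r=1$, $G \equiv 0$, and $F$ with a controlled $L^p$ norm. The heart of the argument is a one-step improvement lemma: there exist $\te \in (0, 1/4)$ and $C > 0$ (with the parameter dependence stated in the theorem) such that for every $0 < \va \leq \te$,
\begin{equation*}
\Big(\fint_{D_\te} |\na^m u_\va|^2 \Big)^{1/2}
\le \te^{\lm - 1} \Big(\fint_{D_1} |\na^m u_\va|^2 \Big)^{1/2}
+ C \Big\{ \Big(\fint_{D_1} |F|^p \Big)^{1/p} + \|G\|_{C^{m-1,1}(D_1)} \Big\}.
\end{equation*}
To prove this, I introduce a homogenized comparison $u_0$ solving $\mc{L}_0 u_0 = F$ in $D_1$ with $D^\ga u_0 = D^\ga u_\va$ on $\pa D_1$ for $|\ga| \leq m-1$. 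The suboptimal $H^{m-1}$ convergence rate (valid without symmetry of $A$ and without extra regularity of $u_0$, as emphasized in the abstract) controls $\|u_\va - u_0\|_{H^{m-1}(D_1)}$ by $C \va^\sg$ times the data norms. A Caccioppoli-type inequality, applied in an annular shell around $D_\te$ and exploiting that $D^\ga(u_\va - u_0) = 0$ on $\De_1$ for $|\ga| \leq m-1$, then upgrades this to an $L^2$ bound on $\na^m(u_\va - u_0)$ over a slightly enlarged set. Classical boundary $C^{m-1,\lm}$ regularity for $\mc{L}_0$ in $C^1$ domains supplies the factor $\te^{\lm - 1}$ on the $u_0$ piece, and choosing $\va$ small relative to $\te$ closes the one-step estimate.

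With the one-step improvement in hand, I iterate dyadically on scales $\te^k$ from $k=0$ down to $\te^k \sim \va$, summing the perturbative error contributions by a standard Campanato-type argument; thanks to the constraint $\lm < m+1-d/p$, the lower-order errors produce only $r^\lm$-type terms that can be reabsorbed. For intermediate scales $\te^k \leq r < \te^{k-1}$, a covering together with the interior $C^{m-1,\lm}$ estimates from \cite{nsx} reconstructs $\fint_{D_r} |\na^m u_\va|^2$ from the endpoint averages, delivering (\ref{tholdere1}) in the stated range $\va \leq r < 1$.

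The main obstacle is the one-step improvement, and within it the derivation and use of an $H^{m-1}$ convergence rate that holds without the symmetry of $A$ or extra regularity of $u_0$. Technically, the Caccioppoli upgrade from $H^{m-1}$ to $H^m$ is more delicate than in the second-order case, because one must simultaneously control all derivatives of order $\le m$ and engineer the comparison $u_0$ so that $u_\va - u_0$ vanishes to order $m-1$ on $\De_1$; this dictates both the choice of boundary data for $u_0$ and the construction of the cutoff functions used in the energy estimate. A secondary concern is keeping the dependence on the $C^1$ modulus $\tau(\cdot)$ uniform across dyadic scales, so that the iteration does not accumulate constants.
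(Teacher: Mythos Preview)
Your overall strategy—approximate $u_\va$ by a homogenized solution $u_0$ at each scale using the $H^{m-1}$ convergence rate, borrow $C^{m-1,\lm}$ decay from $u_0$, and iterate—is exactly the paper's scheme. The difference is in the quantity you iterate and, relatedly, in one step that does not work as you describe it.

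The problematic step is the ``Caccioppoli upgrade'' from $H^{m-1}$ to an $L^2$ bound on $\na^m(u_\va-u_0)$. Set $w=u_\va-u_0$. Then $\mc{L}_\va w=(\mc{L}_0-\mc{L}_\va)u_0=\sum_{|\al|=|\be|=m} D^\al\big[(\bar A^{\al\be}-A^{\al\be}(x/\va))D^\be u_0\big]$, so Caccioppoli on $w$ over $D_\te\subset D_{2\te}$ gives
\[
\fint_{D_\te}|\na^m w|^2 \;\le\; C\,\te^{-2m}\fint_{D_{2\te}}|w|^2 \;+\; C\fint_{D_{2\te}}|\na^m u_0|^2.
\]
Only the first term on the right is small in $\va$; the second is of the same order as $\fint_{D_{2\te}}|\na^m u_0|^2$ and does \emph{not} vanish as $\va/\te\to 0$. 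Hence you cannot treat the $\na^m w$ contribution as the perturbative error that ``choosing $\va$ small relative to $\te$'' absorbs; it carries the same $\te^{\lm_0-1}$ weight as the main term and must be handled together with it.

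The paper sidesteps this entirely by never estimating $\na^m(u_\va-u_0)$. It iterates the $L^2$-level flatness
\[
\Phi_\lm(r;u_\va)=\frac{1}{r^{m-1+\lm}}\inf_{P_{m-1}\in\mf{P}_{m-1}}\Big\{\Big(\fint_{D_r}|u_\va-P_{m-1}|^2\Big)^{1/2}+\text{data terms}\Big\},
\]
for which the convergence rate enters only through $\big(\fint_{D_r}|u_\va-u_0|^2\big)^{1/2}$, with no need to pass to $m$-th derivatives of the difference (Lemmas \ref{lemm3.1}--\ref{lemm3.3}). After iterating $\Phi_\lm(\de r;u_\va)\le\frac12\Phi_\lm(2r;u_\va)+C(\va/r)^{1/4}\Phi_\lm(2r;u_\va)$ down to scale $\va$, a single Caccioppoli inequality applied to $u_\va-P_{m-1}$ at the very end converts $\Phi_\lm$ into the gradient average in (\ref{tholdere1}). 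If you prefer to iterate directly on $\big(\fint_{D_r}|\na^m u_\va|^2\big)^{1/2}$, you can, but the one-step estimate must be derived by first controlling $\big(\fint_{D_{2\te}}|u_\va-P_{m-1}|^2\big)^{1/2}$ (splitting $u_\va=u_0+w$ at the $L^2$ level) and then applying Caccioppoli to $u_\va-P_{m-1}$, not to $w$.
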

Estimate (\ref{tholdere1}) can be viewed as the $C^{m-1,\lm}$ estimate uniform down to the scale $\va$ in $C^1$ domains for higher order  elliptic operators $\mc{L}_\va$.  In addition to the assumptions in Theorem \ref{thholder}, if $A\in V\!M\!O (\mathbb{R}^d),$ i.e., \begin{align}\label{vmo}
  \sup_{ x\in \mathbb{R}^d,\,0<r<t } \fint_{B(x,r)}|A(y)-\fint_{B(x,r)}A|dy\leq \varrho(t), ~~~ 0<t\leq 1,
\end{align}for some nondecreasing continuous function $\varrho(t)$ on $[0,1]$  with $ \varrho(0)= 0$.
Then a standard blow-up argument  gives the following full-scale boundary $C^{m-1,\lm}$ estimate
\begin{align} \label{holdere2}
\|u_\va\|_{C^{m-1,\lm}(D_{1/4})}\leq C \bigg\{\Big(\fint_{D_1}|u_\va|^2 \Big)^{1/2}& +\Big(\fint_{D_1} |F|^p\Big)^{1/p} + \| G\|_{C^{m-1,1}(D_1)}\bigg\}.
\end{align}

 We also mention that the restriction $p>\max\{ d/(m+1), 1\}$ is made to ensure $C^{m-1,\lm}$ estimate of the solution $u_0$  to the homogenized system, which plays an essential role in the proof of the theorem. The restriction $ p>2d/(d+2m-2)$ is used to ensure that $F\in H^{-m+1}(\Om)$, since our proof is based on the convergence result in Theorem 1.4 (see Lemma \ref{lemm3.1} for details).  Although the assumption on the regularity of $F$ in Theorem 1.1 is not sharp, see Corollary \ref{co4.1}  for the full scale uniform $C^{m-1,\lm}$ estimate of $u_\va$, it is enough for us to derive the following uniform $W^{m,p}$ estimate on $u_\va$.

\begin{theorem}\label{twmp}
Let $\Omega$ be a bounded $C^1$ domain in $\mb{R}^d$. Suppose that the coefficient matrix $ A\in V\!M\!O(\R^d)$ satisfies (\ref{cod1})--(\ref{cod3}) and $u_\varepsilon\in H^m(\Om; \R^n)$ is a weak solution to
\begin{equation*}
\begin{cases}
 \mc{L}_\va u_\va=\sum_{|\al|\leq m}D^\al f^\al &\text{ in } \Om,\\
 Tr(D^\ga u_\va)=g_\ga&\text{ on } \pa\Om \quad\text{for}~ 0\leq |\ga|\leq m-1,
 \end{cases}\end{equation*}
where $\dot{g} =\{g_\ga\}_{|\ga|\leq m-1}\in \dot{B}^{m-1/p}_p(\pa\Om; \R^n)$ and $ f^\al\in L^p(\Om;\R^n)$ for $|\al|\leq m, 2\leq p< \infty$.
Then
 \begin{align}\label{twm11}
 \| u_\varepsilon\|_{W^{m,p}(\Om)}   \leq C_p \,\bigg\{ \sum_{|\al|\leq m}\|f^\al\|_{L^p(\Omega)} + \|\dot{g}\|_{\dot{B}^{m-1/p}_p(\pa\Om) }\bigg\},
\end{align}
where the constant $C_p$ depends only on $p,d,n,m,\mu,\Omega$ and $\varrho(t)$ in (\ref{vmo}).
\end{theorem}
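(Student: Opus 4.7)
The plan is to apply Shen's real variable argument (as in \cite{shenan2017}) to upgrade the $L^2$ energy estimate to an $L^p$ estimate for $2<p<\infty$, using the full-scale uniform boundary H\"older estimate \eqref{holdere2} as the key local input.

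First reduce to zero boundary data. By the Besov trace theory, pick an extension $\Phi\in W^{m,p}(\Omega;\mathbb{R}^n)$ of $\dot g$ with $\|\Phi\|_{W^{m,p}(\Omega)}\leq C\|\dot g\|_{\dot{B}^{m-1/p}_p(\partial\Omega)}$. Replacing $u_\varepsilon$ by $u_\varepsilon-\Phi$ and absorbing $\mathcal{L}_\varepsilon\Phi$ into the forcing, one may assume $\dot g=0$. The case $p=2$ is then the standard energy estimate (together with Poincar\'e to absorb the $D^\alpha f^\alpha$ with $|\alpha|<m$).

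For $p>2$, fix a ball $B=B(x_0,r_0)$ with $x_0\in\overline\Omega$ and $r_0$ small, and split $u_\varepsilon=v_\varepsilon+w_\varepsilon$ on $4B\cap\Omega$, where $w_\varepsilon$ solves the localized Dirichlet problem $\mathcal{L}_\varepsilon w_\varepsilon=\sum_{|\alpha|\leq m}D^\alpha(f^\alpha\chi_{4B\cap\Omega})$ with zero data on $\partial(4B\cap\Omega)$. The energy estimate for $w_\varepsilon$ supplies the $L^2$ bound required of $F_B:=|\nabla^m w_\varepsilon|$ in Shen's real variable theorem. It then remains to verify, for some $q>p$ and $R_B:=|\nabla^m v_\varepsilon|$, the reverse H\"older inequality
\begin{equation*}
\Bigl(\fint_{B\cap\Omega}|\nabla^m v_\varepsilon|^q\Bigr)^{1/q}\leq C\Bigl(\fint_{4B\cap\Omega}|\nabla^m v_\varepsilon|^2\Bigr)^{1/2}
\end{equation*}
uniformly in $\varepsilon$, where $v_\varepsilon=u_\varepsilon-w_\varepsilon$ solves $\mathcal{L}_\varepsilon v_\varepsilon=0$ in $4B\cap\Omega$ with $Tr(D^\gamma v_\varepsilon)=0$ on $4B\cap\partial\Omega$ for $|\gamma|\leq m-1$.

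The main obstacle is precisely this reverse H\"older inequality for the homogeneous solution $v_\varepsilon$; one cannot simply invoke the $V\!M\!O$--Calder\'on--Zygmund theory for $\mathcal{L}_\varepsilon$ directly, since the $V\!M\!O$ modulus of $A(\cdot/\varepsilon)$ is not uniform in $\varepsilon$. I would split the analysis according to the scale $r_0$ relative to $\varepsilon$. When $r_0\leq c\varepsilon$, the change of variables $x=\varepsilon y$ reduces $\mathcal{L}_\varepsilon$ to the fixed operator $\mathcal{L}_1$, whose coefficients $A(y)$ lie in $V\!M\!O$ with modulus $\varrho$ independent of $\varepsilon$; the classical Calder\'on--Zygmund theory for $\mathcal{L}_1$ then yields the reverse H\"older inequality at any $q<\infty$, with constants depending only on $\varrho$. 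When $r_0\geq c\varepsilon$, rescale to unit size and apply \eqref{holdere2} to the rescaled $v_\varepsilon$ (whose coefficient parameter $\varepsilon/r_0$ is bounded), obtaining $\|v_\varepsilon\|_{C^{m-1,\lambda}}\leq C\|v_\varepsilon\|_{L^2}$ for any $0<\lambda<1$; combined with the scale-invariant Meyers higher integrability (providing an initial $L^{2+\delta_0}$ bound depending only on the ellipticity constant) and a Gehring-type iteration, this produces the reverse H\"older at arbitrary finite $q$, uniformly in $\varepsilon$. Shen's real variable theorem then delivers $\|\nabla^m u_\varepsilon\|_{L^p(\Omega)}\leq C_p\|f\|_{L^p(\Omega)}$, and Poincar\'e's inequality upgrades this to the full $W^{m,p}$ estimate \eqref{twm11}.
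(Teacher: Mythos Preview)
Your overall architecture matches the paper's: reduce to zero boundary data via a $W^{m,p}$ extension, apply Shen's real-variable theorem with the splitting $u_\varepsilon=v_\varepsilon+w_\varepsilon$ on $4B\cap\Omega$, and verify the reverse H\"older inequality for $|\nabla^m v_\varepsilon|$ on homogeneous solutions. The small-scale case $r_0\lesssim\varepsilon$ via rescaling to $\mathcal{L}_1$ is also what the paper does.

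The gap is in your large-scale case $r_0\gtrsim\varepsilon$. The estimate \eqref{holdere2} only controls $v_\varepsilon$ in $C^{m-1,\lambda}$, i.e.\ it bounds $\nabla^{m-1}v_\varepsilon$ in H\"older norm; it says nothing directly about $\nabla^m v_\varepsilon$ in $L^q$. Meyers/Gehring yields only a fixed small gain $L^{2+\delta_0}$ with $\delta_0$ depending on ellipticity, and the Gehring lemma does not self-iterate to arbitrary $q$. So the combination you propose does not produce
\[
\Big(\fint_{B\cap\Omega}|\nabla^m v_\varepsilon|^q\Big)^{1/q}\le C\Big(\fint_{4B\cap\Omega}|\nabla^m v_\varepsilon|^2\Big)^{1/2}
\]
for arbitrary $q<\infty$.

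What the paper actually does (Lemma~\ref{lemm4.1}) is different and uses an additional ingredient you do not invoke: the uniform \emph{interior} $W^{m,p}$ estimate already established in \cite{nsx}. For each $y\in B\cap\Omega$ one applies that interior estimate on $B(y,\delta(y)/8)$ to get
\[
\Big(\fint_{B(y,\delta(y)/8)}|\nabla^m v_\varepsilon|^p\Big)^{1/p}\le C\Big(\fint_{B(y,\delta(y)/4)}|\nabla^m v_\varepsilon|^2\Big)^{1/2},
\]
and then bounds the right side by $C\,\delta(y)^{\lambda-1}\|\nabla^m v_\varepsilon\|_{L^2(4B\cap\Omega)}$ using the large-scale decay \eqref{tholdere1} (and \eqref{remark21} for scales below $\varepsilon$). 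Choosing $\lambda\in(1-1/p,1)$ makes $\delta(y)^{(\lambda-1)p}$ integrable; integrating in $y$ and applying Fubini yields the reverse H\"older at exponent $p$. Note it is \eqref{tholdere1}, the $L^2$-average decay of $\nabla^m v_\varepsilon$, that is used here---not \eqref{holdere2}. Your proposal would be repaired by replacing the ``Meyers + Gehring iteration'' step with this interior-$W^{m,p}$-plus-weighted-integration argument.
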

    We refer readers to Section 2 for the definition of the Whitney-Besov space $\dot{B}^{s}_p(\partial\Omega; \R^n)$. Note that although the result presented in Theorem \ref{twmp} focuses on the case $p\geq2$, by a standard duality argument, it still holds for $1\!<\!p\!<\!2$. We also mention that the uniform $W^{1,p}$ estimates in the homogenization of second order elliptic systems have been studied largely, see e.g., \cite{gs, gs2, armstrongjfa2016, xujde2017}.  Theorem \ref{twmp}  generalizes the uniform $W^{1,p}$ estimates for second order  elliptic systems to higher order  elliptic systems.

Our third result gives the uniform boundary $C^{m-1,1}$ estimate of $u_\va$ in $C^{1,\te}\, (0<\te<1)$ domains.   Let $D_r, \De_r$ be defined as in (\ref{drder}), and let the defining function $\psi\in C^{1,\te} (\R^{d-1})$ with\begin{align}\label{c2c}
\psi(0)=0, \quad\|\na\psi\|_{C^\te(\R^{d-1})}\leq M_1.
\end{align}
\begin{theorem}\label{tlip}
Assume that $A$ satisfies  (\ref{cod1})--(\ref{cod3}). Let $u_\va\in H^m(D_{1}; \mb{R}^n )$ be a weak solution to
\begin{equation*}
\begin{cases}
 \mc{L}_\va u_\va= \sum_{|\al|\leq m-1}D^\al f^\al    &\text{ in } D_1,   \\
 Tr (D^\ga u_\va)=D^\ga G  &\text{ on }  \De_1 \quad \text{for  } 0\leq|\ga|\leq m-1,
 \end{cases}\end{equation*}
where $f^\al\in L^{q}(D_1; \R^n) $ with $q>d, q\geq 2$, and $ G\in C^{m,\sg}(D_1; \R^n)$ for some $0<\sg\leq \te$.
Then for any $\va\leq r<1$, we have
\begin{align}\label{tlipre1}
\Big(\fint_{D_r}|\na^m u_\va|^2\Big)^{1/2}\leq C  \bigg\{\Big(\fint_{D_1}|u_\va|^2 \Big)^{1/2}
+ \sum_{|\al|\leq m-1}\Big(\fint_{D_1}|f^\al|^q\Big)^{1/q} + \|G \|_{C^{m,\sg}(D_1)}\bigg\},
\end{align}
where $C$ depends only on $d,n,m,\mu,  q, \sigma, \te$ and $M_1$.
\end{theorem}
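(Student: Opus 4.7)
The plan is to carry out a large-scale regularity argument in the spirit of Shen and Armstrong--Smart, powered by the authors' $H^{m-1}$ convergence rate. First, by extending $G$ via a Whitney-type operator to $\Psi \in C^{m,\sg}(D_1;\R^n)$ with $Tr(D^\ga \Psi) = D^\ga G$ for $|\ga|\leq m-1$ and $\|\Psi\|_{C^{m,\sg}(D_1)} \leq C\|G\|_{C^{m,\sg}(D_1)}$, and replacing $u_\va$ by $u_\va - \Psi$, one reduces to the case $G = 0$ at the cost of adding divergence-form source terms of order $\leq m-1$ controlled by $\|G\|_{C^{m,\sg}(D_1)}$. Setting
\[
\Phi(r) := \left(\fint_{D_r}|\na^m u_\va|^2\right)^{1/2}, \qquad E := \sum_{|\al|\leq m-1}\!\left(\fint_{D_1}|f^\al|^q\right)^{1/q} + \|G\|_{C^{m,\sg}(D_1)} + \left(\fint_{D_1}|u_\va|^2\right)^{1/2},
\]
the target becomes to prove $\Phi(r) \leq C\,E$ uniformly in $\va \leq r \leq 1$.

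The core of the argument is a one-step improvement: there exist $\te_0 \in (0,1/4)$ and $\al > 0$ such that
\[
\Phi(\te_0 r) \leq \tfrac{1}{2}\,\Phi(r) + C\,E \qquad \text{for all } \va \leq r \leq 1.
\]
Iteration through the dyadic scales $r = \te_0^k \geq \va$ then yields the theorem; scales $r < \va$ are handled by the standard rescaling $y = x/\va$ which reduces matters to a unit-cell Schauder estimate. To prove the improvement, fix $\va \leq r \leq 1$ and let $v \in u_\va + H^m_0(D_r;\R^n)$ solve the homogenized Dirichlet problem $\mc{L}_0 v = \sum_{|\al|\leq m-1} D^\al f^\al$ in $D_r$. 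The authors' suboptimal $H^{m-1}$ convergence rate (as underlies Theorem 1.4), applied on $D_r$ after rescaling, gives
\[
r^{-1}\left(\fint_{D_r}|\na^{m-1}(u_\va - v)|^2\right)^{1/2} \leq C(\va/r)^\al\{\Phi(r) + E\}.
\]
Since $\mc{L}_0$ has constant coefficients and $D_r$ is a rescaled $C^{1,\te}$ graph domain, Agmon--Douglis--Nirenberg theory provides the sharp boundary estimate $r^\sg [\na^m v]_{C^\sg(D_{r/2})} \leq C\{\Phi(r) + E\}$ for $0 < \sg \leq \te$. Hence the Taylor polynomial $P_v$ of $v$ of degree $m$ at the origin, adjusted so as to respect the zero Dirichlet trace to order $m-1$ along $\De_r$, satisfies
\[
\left(\fint_{D_{\te_0 r}}|\na^m(v - P_v)|^2\right)^{1/2} \leq C\te_0^\sg\{\Phi(r) + E\}.
\]
Applying the Caccioppoli inequality to $u_\va - P_v$, which solves $\mc{L}_\va(u_\va - P_v) = \sum_{|\al|\leq m-1} D^\al f^\al - \mc{L}_\va P_v$ with $\|\mc{L}_\va P_v\|_\infty \leq C\{\Phi(r) + E\}$, and combining with the convergence estimate yields
\[
\Phi(\te_0 r) \leq C\te_0^\sg\{\Phi(r) + E\} + C\te_0^{-m-d/2}(\va/r)^\al\{\Phi(r) + E\} + CE.
\]
Choosing $\te_0$ small enough that $C\te_0^\sg \leq 1/4$, and then restricting $r \geq \va/\de$ with $\de$ small enough that $C\te_0^{-m-d/2}\de^\al \leq 1/4$, gives the desired one-step improvement on all scales $r \in [\va,1]$.

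The main obstacle is the construction of the polynomial corrector $P_v$ at the curved boundary. In the second-order case $P_v$ is affine and matches a vanishing trace on the tangent hyperplane at the origin; in the higher-order setting it has degree $m$ and its Dirichlet data up to order $m-1$ must be compatible with the actual surface $\De_r$, not merely its tangent hyperplane. The mismatch between $\De_r$ and its tangent hyperplane contributes a boundary-layer error of order $r^\te$, which is precisely why the hypothesis $\sg \leq \te$ is imposed: it guarantees that the $C^{m,\sg}$ regularity of $v$, which produces the decay factor $\te_0^\sg$ in the one-step improvement, is compatible with the geometric accuracy of the polynomial approximation at the curved boundary. Coordinating the Whitney extension $\Psi$ of $G$, the polynomial $P_v$, and the Caccioppoli test function so that every error term is absorbed into either $\tfrac12\Phi(r)$ or $CE$ is the delicate technical point of the proof.
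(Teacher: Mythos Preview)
Your one-step improvement
\[
\Phi(\te_0 r)\le \tfrac12\,\Phi(r)+CE,\qquad \Phi(r)=\Big(\fint_{D_r}|\na^m u_\va|^2\Big)^{1/2},
\]
does not follow from the ingredients you list. After Caccioppoli on $u_\va-P_v$ you obtain a bound on $\big(\fint_{D_{\te_0 r}}|\na^m(u_\va-P_v)|^2\big)^{1/2}$, but to recover $\Phi(\te_0 r)$ you must add back $|\na^m P_v|$; moreover $\mc{L}_\va P_v=\sum_{|\al|=|\be|=m}D^\al\big(A^{\al\be}_\va\,D^\be P_v\big)$ is a \emph{top-order} divergence source of size $|\na^m P_v|$, and Caccioppoli turns it into a term of the same size on the right. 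Since $|\na^m P_v|=|\na^m v(0)|$ is only controlled by $C\{\Phi(r)+E\}$ (via Schauder for $v$ on $D_r$), both contributions give $C\{\Phi(r)+E\}$, not $\tfrac12\Phi(r)+CE$. In short, $\Phi(r)$ is a quantity that should stay \emph{bounded} across scales in a Lipschitz-type estimate; it cannot be made to \emph{decay} by a fixed factor.

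The paper's argument avoids exactly this trap: it works with the excess
\[
H(r)=\frac{1}{r^m}\inf_{P_m\in\mf{P}_m}\Big\{\Big(\fint_{D_r}|u_\va-P_m|^2\Big)^{1/2}+\cdots\Big\},
\]
which \emph{does} satisfy $H(\de r)\le \tfrac12 H(r)+C(\va/r)^{1/4}\Phi(2r)$ (Lemma~5.3), and tracks separately the top coefficient $h(r)=\sum_{|\al|=m}\tfrac{1}{\al!}|D^\al P_{mr}|$ of the optimizing polynomial. The pair $(H,h)$ is then iterated via the abstract Lemma~5.4 (from Shen), which handles precisely the coupling $\Phi\le H+Ch$ and $|h(t)-h(s)|\le C H(2r)$. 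The point is that the polynomial drift $h(r)$ need not decay---it is only shown to be bounded by $H(1)+h(1)$---and this is the piece your scheme tries, incorrectly, to absorb into $\tfrac12\Phi(r)$. A secondary issue: your reduction to $G=0$ by subtracting a $C^{m,\sg}$ extension introduces a divergence source $\sum_{|\al|=m}D^\al(A^{\al\be}_\va D^\be\Psi)$ of top order $m$, which does not fit the form $\sum_{|\al|\le m-1}D^\al f^\al$; the paper keeps $G$ inside the quantities $\Phi,H$ instead.
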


Similar to \ref{tholdere1}, estimate (\ref{tlipre1}) is the $C^{m-1,1}$ estimate uniform down to the scale $\va$ for the operator $\mc{L}_\va$, which separates the large-scale estimates due to the homogenization process from the small-scale estimates related to the smoothness of the coefficients.  If in addition, $A$ is H\"{o}lder continuous,
 i.e., there exist $  \Lambda_0>0,  \tau_0\in(0,1) $ such that
\begin{align}\label{hol}
|A(x)-A(y)|\leq \Lambda_0 |x-y|^{\tau_0} \quad \text{ for any } x, y\in \mathbb{R}^d,
\end{align}
we can derive the full-scale boundary $C^{m-1,1}$ estimate
\begin{align}\label{co511}
\|\na^mu_\va\|_{L^{\infty}(D_{1/4})}\leq C  \bigg\{\Big(\fint_{D_1}|u_\va|^2 \Big)^{1/2}    +\sum_{|\al|\leq m-1}\Big(\fint_{D_1}|f^\al|^q\Big)^{1/q} + \|G \|_{C^{m,\sg}(D_1)}\bigg\}.
\end{align}
This generalises the boundary Lipschitz estimates in \cite{al87,shenan2017} for second order  elliptic systems to higher order  elliptic systems.

 Note that Theorem \ref{tlip} does not require the symmetry assumption on the coefficient matrix $A$. Therefore, it may be of some independent interests even for second order  elliptic systems.
  Recall that the symmetry assumption on the coefficient matrix $A$ is made in \cite{klsa1} to establish the uniform boundary Lipschitz estimate for second order elliptic systems with Neumann boundary conditions. Such  an assumption was removed in \cite{armstrongcpam2016}, where the boundary Lipschitz estimate was obtained for both the Dirichlet and Neumann problems (of second order) with almost periodic coefficients. When $m=1$, without essential difficulties we  may extended the uniform Lipschitz estimate in Theorem \ref{tlip} to Neumann boundary problems. However, our investigations do not rely on the nontangential maximum function estimates, which had played an essential role in \cite[p.1896]{armstrongcpam2016}. This may allow one to treat more general elliptic systems.

Finally, we mention that the requirements on smoothness of coefficients and the domain for uniform estimates in Theorem \ref{thholder} to Theorem \ref{tlip} are the same as those for second order  elliptic systems \cite{shenan2017}. Therefore, results in theorems above, combined with the interior estimates in our previous paper \cite{nsx},  present a unified description on the uniform regularity estimates in homogenization of $2m$-order elliptic systems in the divergence form. The counterpart for higher order  elliptic operators of non-divergence form will be presented in a separate paper shortly.

 The proofs of theorems above rely on the following convergence result.
 \begin{theorem}\label{thcon}
Suppose that $\Omega$ is a bounded Lipschitz domain in $\mathbb{R}^d, d\geq1$, and
 the coefficient matrix $A$ satisfies (\ref{cod1})--(\ref{cod3}). Let $u_\varepsilon, u_0$ be the weak solutions
 to the Dirichlet problem (\ref{eq1}) and the homogenized problem (\ref{hoeq1}), respectively.
Then for $0<\va<1$ and any $0<\nu<1 $, we have\begin{align}\label{thconre1}
&\|u_\varepsilon-u_0\|_{H^{m-1}_0(\Omega)}\leq C_\nu\varepsilon^{1-\nu} \left\{\|\dot{g}\|_{W\!A^{m,2}(\partial\Omega)}+\|f\|_{H^{-m+1}(\Omega)}\right\},
\end{align} where $C_\nu$ depends only on $ d, n, m,\nu,\mu$ and $\Om.$
If in addition $ A$ is symmetric, i.e. $A=A^*$, then
\begin{align}\label{thconre2}
 \| u_\varepsilon-u_0\|_{H^{m-1}_0(\Omega)} \leq C \varepsilon  \ln (1/\va)  \left\{\|\dot{g}\|_{W\!A^{m,2}(\partial\Omega)}+\|f\|_{H^{-m+1}(\Omega)}\right\},
\end{align}
where $C$ depends only on $d, n, m,\mu$ and $\Om.$
\end{theorem}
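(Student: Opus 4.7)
I would follow the two-scale-expansion + duality scheme used for second-order systems by Kenig--Lin--Shen and Shen, with all the auxiliary periodic objects already constructed in the authors' companion paper \cite{nsx}: the first-order correctors $\chi = (\chi^\beta_j)_{|\beta|=m}$ from the cell problem for $\mathcal L_1$, the skew-symmetric flux correctors $\mathfrak B = (\mathfrak B^{\alpha\beta}_{ij})$ realising the identity $A_0 - A(\cdot) - A(\cdot)\nabla^m\chi = \nabla\cdot \mathfrak B$ in the unit cell, and their adjoint analogues $\chi^\ast,\mathfrak B^\ast$ attached to $\mathcal L_\varepsilon^\ast$. Fix a smoothing operator $K_\varepsilon = \varphi_\varepsilon\ast$ at scale $\varepsilon$ and a smooth cut-off $\eta_\varepsilon$ vanishing in $\Omega_{3\varepsilon} := \{x\in\Omega:\mathrm{dist}(x,\partial\Omega)<3\varepsilon\}$, equal to $1$ outside $\Omega_{4\varepsilon}$, with $|D^k\eta_\varepsilon|\leq C\varepsilon^{-k}$. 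Extend $u_0$ to $\widetilde u_0\in H^m(\R^d)$ and define the modified two-scale error
\begin{equation*}
w_\varepsilon := u_\varepsilon - u_0 - \varepsilon^m\sum_{|\beta|=m}\chi^\beta(\cdot/\varepsilon)\,K_\varepsilon(\eta_\varepsilon D^\beta\widetilde u_0)\in H^m_0(\Omega).
\end{equation*}

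\textbf{Energy estimate.} Using the flux-corrector identity one rewrites $\mathcal L_\varepsilon w_\varepsilon = \sum_{|\alpha|=m} D^\alpha T_\alpha$, with each $T_\alpha$ supported in $\Omega_{4\varepsilon}$ modulo an $O(\varepsilon)$ interior remainder. Testing against $w_\varepsilon$, invoking the coercivity of $\mathcal L_\varepsilon$ on $H^m_0(\Omega)$, and using the Lipschitz-domain inequality $\|D^m u_0\|_{L^2(\Omega_{4\varepsilon})}\leq C\sqrt\varepsilon\,\|u_0\|_{H^m(\Omega)}$ yields
\begin{equation*}
\|w_\varepsilon\|_{H^m_0(\Omega)}\leq C\sqrt\varepsilon\,\bigl(\|f\|_{H^{-m+1}(\Omega)}+\|\dot g\|_{WA^{m,2}(\partial\Omega)}\bigr).
\end{equation*}
The identical argument with $\mathcal L_\varepsilon^\ast$ gives the analogous bound for the adjoint remainder $w_\varepsilon^\ast$ built from $v_0 := (\mathcal L_0^\ast)^{-1}\Phi$, for any $\Phi \in H^{-m+1}(\Omega;\R^n)$.

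\textbf{Duality and the symmetric case.} For $\Phi$ with $\|\Phi\|_{H^{-m+1}}\leq 1$, two integrations by parts (using $u_\varepsilon - u_0,v_0\in H^m_0$ together with $\mathcal L_0 u_0 = f = \mathcal L_\varepsilon u_\varepsilon$) yield
\begin{equation*}
\langle u_\varepsilon - u_0,\Phi\rangle = \int_\Omega (A_0 - A(x/\varepsilon))\,\nabla^m u_\varepsilon \cdot \nabla^m v_0\,dx.
\end{equation*}
Substituting the corrector expansions into both factors ($\chi$ for $u$, $\chi^\ast$ for $v$) and integrating by parts against $\mathfrak B,\mathfrak B^\ast$ gains a factor of $\varepsilon$ in the bulk, while the boundary-layer and $w_\varepsilon, w_\varepsilon^\ast$ cross-terms contribute $\sqrt\varepsilon\cdot\sqrt\varepsilon = \varepsilon$. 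Taking the supremum in $\Phi$ gives (\ref{thconre1}). The residual loss $\varepsilon^\nu$ comes from the smoothing commutators $(I-K_\varepsilon)(\eta_\varepsilon D^m u_0)$ and $(I-K_\varepsilon)(\eta_\varepsilon D^m v_0)$ in $L^2$: in a Lipschitz domain one only has the Meyers bound $u_0, v_0\in W^{m,2+\delta}$, so these decay like $\varepsilon^{\delta/(2+\delta)}$ rather than $\varepsilon$. When $A = A^\ast$ one has $\chi^\ast = \chi$, and a Tartar-type swap reduces the double commutator to a single one with only a logarithmic loss, producing the sharper rate $\varepsilon|\ln\varepsilon|$ of (\ref{thconre2}). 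The chief technical hurdle is the bookkeeping of the many $D^m$-derivatives and multi-indices together with the Whitney--Besov trace $\dot g\in WA^{m,2}(\partial\Omega)$; once the correctors and flux correctors from \cite{nsx} are in hand, the scheme runs in close analogy to the second-order theory.
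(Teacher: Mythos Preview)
Your overall architecture---two-scale expansion plus duality against the adjoint problem---matches the paper, but the proposal misidentifies the key technical inputs in three places, and as written the argument does not close.

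First, the ``Lipschitz-domain inequality'' $\|D^m u_0\|_{L^2(\Omega_{4\varepsilon})}\leq C\sqrt\varepsilon\,\|u_0\|_{H^m(\Omega)}$ is false for general $u_0\in H^m$: nothing prevents $\nabla^m u_0$ from concentrating near $\partial\Omega$. The paper obtains boundary-layer control by decomposing $u_0=v_0+v$, where $v_0$ solves the problem in a smooth superdomain (hence lies in $H^{m+1}$) and $v$ solves $\mathcal L_0 v=0$ with the residual boundary data. For $v$ one invokes Agranovich's Besov theory in Lipschitz domains to get $v\in B^{m-1/2+s}_2(\Omega)$ for every $s<1$, which via Grisvard's weighted characterisation yields
\[
\int_\Omega |\nabla^m v|^2\,\delta^{1-2s}\,dx \leq C_s,
\]
and hence $\|\nabla^m u_0\|_{L^2(\Omega_{4\varepsilon})}\leq C_\nu\varepsilon^{1/2-\nu}$ and the companion weighted bounds $\|\nabla^m u_0\|_{L^2(\Omega^{2\varepsilon};\delta^{-1})}+\|\nabla^{m+1}u_0\|_{L^2(\Omega^{2\varepsilon};\delta)}\leq C_\nu\varepsilon^{-\nu}$. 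The $\nu$-loss thus comes from $s<1$ in the Besov scale, \emph{not} from Meyers higher integrability; a Meyers argument would only produce a fixed small exponent $\delta/(2+\delta)$, which cannot reach $1-\nu$ for arbitrary $\nu>0$.

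Second, the paper's duality step does not pair two $\sqrt\varepsilon$ energy bounds. Instead it pairs the bilinear form of $w_\varepsilon$ against the adjoint corrector expansion of $\psi_\varepsilon$ using the \emph{weighted} version of the remainder estimate (weight $\delta^{\pm1}$), so that the cross terms are of the form $\varepsilon\,\|\nabla^{m+1}u_0\|_{L^2(\Omega^{2\varepsilon};\delta)}\cdot\|\nabla^m\psi_0\|_{L^2(\Omega^{2\varepsilon};\delta^{-1})}\leq C_\nu\varepsilon^{1-2\nu}$. This weighted duality is the paper's principal new idea and is entirely absent from your sketch.

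Third, in the symmetric case the improvement is not a consequence of $\chi^\ast=\chi$ or any ``Tartar swap''. Rather, when $A=A^\ast$ one has Verchota's nontangential maximal function estimate $\|\mathcal M(\nabla^m v)\|_{L^2(\partial\Omega)}\leq C$, which sharpens the boundary-layer and weighted bounds to $\|\nabla^m u_0\|_{L^2(\Omega_{2\varepsilon})}\leq C\varepsilon^{1/2}$ and $\|\nabla^m u_0\|_{L^2(\Omega^{2\varepsilon};\delta^{-1})}\leq C[\ln(1/\varepsilon)]^{1/2}$; feeding these into the same weighted duality yields $\varepsilon\ln(1/\varepsilon)$.
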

The error estimates above can be viewed as a counterpart in general Lipschitz domains for the convergence rates obtained in \cite{ks,pastukhova2016, pastukhova2017, Suslina2017-D}. Estimate (\ref{thconre1}) is new and may be of some independent interests even for second order elliptic systems. Recall that sharp convergence rate has been extensively studied for second order elliptic equations. The estimate
\begin{align*}
\|u_\varepsilon-u_0\|_{L^{2} (\Omega)} \leq C \varepsilon \|u_0\|_{H^{2}(\Omega)}
\end{align*}
has been obtained for second order elliptic equations in divergence form in $C^{1,1}$ domains \cite{grisoas2004,suslinaD2013, suslinaN2013}, as well as in Lipschitz domains with additional assumptions $u_0\in H^2(\Om)$ and $A=A^*$ \cite{shenan2017, nsx}. In \cite{klsal2,xusiam2016}, the $O[\va \ln(1/\va)]$ convergence rate like  (\ref{thconre2}) was obtained for second order elliptic systems under the assumption that $A=A^*$.
Compared with the reference aforementioned, our estimate (\ref{thconre1}), although suboptimal, holds in general Lipschitz domains and needs neither the symmetry of $A$, nor additional regularity assumptions of $u_0$.  Moreover, the assumptions on the regularity of $A, \dot{g}, f$ are also rather general. To the best of the authors' knowledge, optimal or suboptimal convergence rate under such weak conditions seems to be unknown previously even for second order elliptic systems.

The proof of Theorem \ref{thcon} follows the line of \cite{klsal2,suslinaD2013}. The first step is to derive an estimate like
\begin{align*}
  \|u_\varepsilon-u_0\|_{L^{2} (\Omega)} \leq C \varepsilon^{(1/2)^-}  \left\{\|\dot{g}\|_{W\!A^{m,2}(\partial\Omega)}+\|f\|_{H^{-m+1}(\Omega)}\right\}.
  \end{align*}
When $A$ is symmetric, this was done with the help of the nontangential maximum function estimate, which gives proper controls on $u_0$ near the boundary $\pa\Om$, see \cite{klsal2,xusiam2016,shenan2017} for the details.
Unfortunately, if $A$ is not symmetric and the domain is just Lipschitz (or even $C^1$) the nontangential maximum function estimate is not in hand.
Instead, we will take  advantage of some weighted estimate of $u_0$ (see Lemma \ref{l3.2} ) to achieve the goal. With these estimates at our disposal, we then modify the duality argument in \cite{suslinaD2013} (see also \cite{suslinaN2013, shenan2017}) with proper weight to derive the desired convergence rate.


Armed with Theorem \ref{thcon}, our proof of Theorems \ref{thholder} and \ref{tlip} follows the scheme in \cite{armstrongan2016, shenan2017}, which  roughly speaking  is a three-step argument:
\begin{enumerate}
\item[(i)] Establish the convergence rate in $L^2(\Om)$ in terms of boundary data $g$ and the forcing term $f$, i.e., the error estimate  like $$\|u_\va-u_0\|_{L^2(\Om)}\leq C \va^{\sigma_0}\big\{ \text{   norms of data } g\text{ and }  f \big\},  \text{ for some }  0<\sigma_0\leq 1;$$
\item[(ii)]  Prove that $u_\va$  satisfies the  flatness property, i.e., how well it could be approximated by a affine functions as $u_0$ does;
\item[(iii)]  Iterate step (ii) down to the scale $\va$, with the help of the error estimate in the first step.
\end{enumerate}
Note that  (\ref{thconre1}) gives (i), we can thus pass to Step (ii). We shall adapt some ideas in \cite{armstrongan2016, shenan2017} to verify that $u_\va$ satisfies  the so-called flatness property. However, instead of estimating how well  $u_\varepsilon$ is approximated by ``affine" functions as in \cite{armstrongan2016,shenan2017}, we estimate how well $u_\varepsilon$ is approximated by  polynomials of  degree $m-1$ and $m$, respectively. By a proper iteration argument, we then derive the desired large-scale $C^{m-1,\lm} (0\!<\!\lm\!<1)$ and $C^{m-1,1}$ estimates.
 The corresponding full scale estimates (\ref{co311}) and (\ref{co511}) follow from a standard blow-up argument.

Finally, the proof of Theorem \ref{twmp} relies on the boundary H\"{o}lder estimate (\ref{tholdere1}) and a real variable argument originated from \cite{caffarelli1998} and further developed in \cite{shenan2005, shenad2007}. The key idea is to reduce the $W^{m,p}$ estimate (\ref{twm11})
 to a reverse H\"{o}lder inequality of the corresponding homogeneous problem, see Lemma \ref{lemm4.1} for the details.


\section{Preliminaries}
\subsection{Function spaces}

To begin with, let us give the definitions of some function spaces involved next. Let $\Om $ be a bounded Lipschitz domain in  $\R^d.$
Let $H^m(\Omega; \mathbb{R}^n )$ and $H_0^m(\Omega; \mathbb{R}^n )$ with dual $H^{-m}(\Omega; \mathbb{R}^n )$,  be the conventional Sobolev spaces of $\mathbb{R}^n$-valued  functions.
For $0<s<1, 1<p<\infty$ and any nonnegative integer $k$, let $B^{k+s}_p(\Om)$ be the Besov space with norm, see \cite{grisvard} (p.17)
$$ \|u\|_{B^{k+s}_p(\Om)}= \sum_{0\leq\ell\leq k}\|\na^\ell u\|_{L^{p}(\Om)}+\sum_{|\ze|=k}\bigg\{\int_{\Om} \int_{\Om} \frac{|D^\ze f(x)-D^\ze f(y)|^p}{|x-y|^{d +sp}}\,dx\,dy\bigg\}^{1/p}.$$
Since $\Om$ is a bounded Lipschitz domain, $B^{k+s}_p(\Om)$ consists of the restrictions to $\Om$ of functions in $B^{k+s}_p(\R^d)$ \cite[p.25]{grisvard}.

Also define the  Whitney-Besov space $\dot{B}^{m-1+s}_p(\partial\Omega; \R^n)$ as
the closure of the  set of arrays
$$\left\{  \{ D^\alpha \mathcal{U} \}_{|\alpha|\leq m-1}:  \mathcal{U}\in C_c^\infty(\mathbb{R}^d) \right\},$$
under the norm  $$\| \dot{u} \|_{\dot{B}^{m\!-1\!+s}_p(\partial\Omega)}  =\sum_{|\alpha|\leq m-1}\Big\{\|u_\alpha\|_{L^p(\partial \Omega)}+\Big(\int_{\pa\Om} \int_{\pa\Om} \frac{|u_\al(x)-u_\al(y)|^p}{|x-y|^{d-1+sp}}dS_x dS_y\Big)^{1/p}\Big\},$$
where $ \dot{u}= \{u_\alpha\}_{|\alpha|\leq m-1},$ see e.g., \cite{agran2007}.

Define the Whitney-Sobolev space $W\!A^{m,p}(\partial\Omega, \mathbb{R}^n)$ as the completion of the set of arrays of $\mathbb{R}^n$-valued functions
$$\left\{\{ D^\alpha \mathcal{G}\mid_{\partial\Omega}\}_{|\alpha|\leq m-1}:  \mathcal{G}\in C_c^\infty(\mathbb{R}^d; \mathbb{R}^n ) \right\},
$$
under the norm
$$ \| \dot{g} \|_{W\!A^{m,p}(\partial\Omega)} =\sum_{|\alpha|\leq m-1} \|g_\alpha\|_{L^p(\partial \Omega)} + \sum_{|\alpha|=m-1} \|\nabla_{tan}  g_\alpha\|_{L^p(\partial \Omega)}.
$$  for any $ \dot{g}= \{g_\alpha\}_{|\alpha|\leq m-1} \in W\!A^{m,p}(\partial\Omega, \mathbb{R}^n)$\cite{mms}.

\subsection{Qualitative Homogenization}
Under the ellipticity condition
(\ref{cod1}),
for any $\dot{g}\in W\!A^{m,2}(\partial\Omega, \mathbb{R}^n)$ and $ f\in H^{-m}(\Omega;\mathbb{R}^n)$,
  Dirichlet problem (\ref{eq1}) admits a unique weak solution $u_\varepsilon$ in $ H^m(\Omega;\mathbb{R}^n)$ such that
 $$
 \|u_\varepsilon\|_{H^m(\Omega)} \leq C \left\{\|f\|_{H^{-m}(\Omega)} +\|\dot{g}\|_{W\!A^{m,2}(\partial\Omega)}\right\},
 $$
 where $C$ depends only on $d$, $m$, $n$, $\mu$ and $\Omega$. It is known that (see e.g., \cite{lions1978, pastukhova2016})
under the additional periodicity condition (\ref{cod3}),
the operator $\mathcal{L}_\varepsilon$ is G-convergent to  $\mathcal{L}_0 $, where
 \begin{align*}
(\mathcal{L}_0 u )_i = \sum_{|\alpha|=|\beta|=m} (-1)^m D^\alpha (\bar{A}_{ij}^{\alpha \beta}D^\beta u_j)
\end{align*}
is an elliptic operator of order $2m$ with constant coefficients,
 $$\bar{A}_{ij}^{\alpha \beta}=\sum_{|\gamma|=m}\frac{1}{|Q|}\int_Q \Big[A_{ij}^{\alpha \beta}(y)
 + A_{i\ell}^{\alpha \gamma}(y)D^\gamma \chi_{\ell j}^\beta (y)\Big]dy.
 $$
Here $Q=[0,1)^d$, $\chi=(\chi_j^\ga)= (\chi^\gamma_{ij})$ is the matrix of correctors for the operator $\mathcal{L}_\varepsilon$
given by
\begin{equation}\label{corrector}
\begin{cases}
 \sum_{|\alpha|=|\beta|=m}  D^\alpha \big\{A_{ik}^{\alpha \beta}(y)D^\beta \chi_{kj}^\ga (y)\big\}=- \sum_{|\alpha| =m}  D^\alpha   A_{ij}^{\alpha \ga}(y)  ~~    \text{ in } \R^d,\vspace{0.3cm}\\
 \chi_j^\ga (y) \quad\text{ is 1-periodic } \quad\text{ and }\quad
 \int_Q \chi_j^\ga (y)=0.
 \end{cases}
\end{equation}
The matrix $(\bar{A}_{ij}^{\alpha \beta})$ is bounded and
satisfies the coercivity condition (\ref{cod1}). Thus the following homogenized problem of (\ref{eq1}),
\begin{equation} \label{hoeq1}
 \begin{cases}
 \mathcal{L}_0  u_0 =f &\text{ in } \Omega,  \vspace{0.3cm}\\
 Tr (D^\gamma u_0)=g_\gamma & \text{ on } \partial\Omega\quad \text{for }  0\leq|\gamma|\leq m-1,
\end{cases}
\end{equation}
admits a unique weak solution $u_0\in H^m(\Omega; \mathbb{R}^n )$,  satisfying
$$ \|u_0\|_{H^m(\Omega)} \leq C \left\{\|f\|_{H^{-m}(\Omega)} +\|\dot{g}\|_{W\!A^{m,2}(\partial\Omega)}\right\}.$$

For $1\leq i,j\leq n$ and multi indexes $\al,\be$ with $|\alpha|=|\beta|=m,$ set
\begin{align}\label{duc}
B_{ij}^{\alpha \beta}(y)=A_{ij}^{\alpha \beta}(y)+ \sum_{|\ga|=m}A_{ik}^{\alpha \ga}(y)  D^\ga \chi_{kj}^\beta (y)-\bar{A}_{ij}^{\alpha \beta}.
\end{align}
By the definitions of  $\chi^\ga(y)$ and $\bar{A},$
  for any $1\leq i,j\leq n$ and any multi indexes $\al,\be$ with $|\alpha|=|\beta|=m,$  $B_{ij}^{\alpha \beta}(y)\in L^2(Q) $ is 1-periodic with zero mean, and
$\sum_{|\alpha|=m}D^\alpha B_{ij}^{\alpha \beta}(y)=0.
$
Therefore,  there exists a function $ \mf{B}_{ij}^{\ga\alpha\beta} $ such that
 \begin{align*}
 \mf{B}_{ij}^{\ga\alpha\beta}  =- \mf{B}_{ij}^{\alpha\ga\beta} , ~~\sum_{|\ga|=m}D^\ga \mf{B}_{ij}^{\ga\alpha\beta} = B_{ij}^{\alpha \beta}
\quad\text{and} \quad \|\mf{B}_{ij}^{\ga\alpha\beta} \|_{H^m(Q)}\leq  C  \|B_{ij}^{\alpha\beta}\|_{L^2(Q)},
 \end{align*}
  where $C$ depends only on $d, n, m$, see \cite[Lemma 2.1]{nsx}.

Let $\mathcal{L}^*_\varepsilon$ be the adjoint operators of $\mathcal{L}_\varepsilon$,  i.e.,
\begin{align}\label{adj}
\mathcal{L}^*_\varepsilon=  (-1)^{m}\sum_{|\alpha|=|\beta|=m} D^\alpha\left(A^{*\alpha\beta}
(x/\va)D^\beta  \right), \quad ~A^*=(A_{ij}^{*\alpha\beta})= ( A_{ji}^{\beta \alpha}).
\end{align}
Parallel to (\ref{corrector}), we can introduce the matrix of correctors $\chi^*= (\chi^{*\ga}_{j})= (\chi^{*\ga}_{ij})$ for  $\mathcal{L}^*_\varepsilon$,
 \begin{equation}\label{dualcorrector}
\begin{cases}
 \sum_{|\alpha|=|\beta|=m}  D^\alpha \big\{A_{ik}^{*\alpha \beta}(y)D^\beta \chi_{kj}^{*\ga}(y)\big\}=- \sum_{|\alpha| =m}  D^\alpha   A_{ij}^{*\alpha \ga}(y)  ~~    \text{ in } \R^d,\vspace{0.3cm}\\
 \chi_j^{*\ga} (y) \quad\text{ is 1-periodic } \quad\text{ and }\quad
 \int_Q \chi_j^{*\ga} (y)=0,
 \end{cases}
\end{equation}
We can also introduce the dual correctors $ \mathfrak{B}^{*\gamma \alpha \beta}(y)$ of $\chi^*$.
It is not difficult to see that $ \chi^{*\ga}$ and $ \mathfrak{B}^{*\gamma \alpha \beta}$ satisfy the same properties as $\chi^\ga$ and $ \mathfrak{B}^{\gamma \alpha \beta}$, since $A^*$ satisfies the same conditions as $A$.

\subsection{ Smoothing operators and auxiliary estimates}

For any fixed $\varphi\in C_c^\infty(B(0,\frac{1}{2}))$ such that $\varphi>0$ and $\int_{\mb{R}^d} \varphi (x)dx=1$, set  $\varphi_\varepsilon=\frac{1}{\varepsilon^d} \varphi(\frac{x}{\varepsilon}) $ and define
\begin{align*}
S_\varepsilon(f)(x)=\int_{\mb{R}^d} \varphi_\varepsilon(x-y)f(y)\, dy, \quad\text{and}\quad  S^2_\varepsilon= S_\varepsilon \circ S_\varepsilon.
\end{align*}
Denote $\de(x)=dist(x, \pa\Om)$, $\Omega^{ \varepsilon}=\{x\in\Omega: \de(x)> \varepsilon\},~~
\Omega_{ \varepsilon}=\{x\in\Omega: \de(x)< \varepsilon\}.$
\begin{lemma}\label{l2.2}
Assume that $f\in L^p(\R^d)$ for some $1\leq p<\infty$ and $g\in L_{loc}^p(\mb{R}^d)$ is 1-periodic. Let $h\in L^\infty(\mb{R}^d)$ with compact support $\Om^{3\va}$. Then
 \begin{align}
&\|g(x/\varepsilon)S_\varepsilon(f)(x) h(x) \|_{L^p(\Om^{3\va};\, \de )}\leq C   \|g\|_{L^p(Q)} \|f\|_{L^p(\Om^{2\va};\, \de )},\label{l2.2re2}\\
&\|g(x/\varepsilon)S_\varepsilon(f)(x) h(x) \|_{L^p(\Om^{3\va};\, \de^{-1})}\leq C   \|g\|_{L^p(Q)} \|f\|_{L^p(\Om^{2\va};\, \de^{-1} )}. \label{l2.2re3}
\end{align}
where  $\|u\|_{L^p(\Om;\, \de )}$ (similar for $\|u\|_{L^p(\Om;\, \de^{-1} )}$ ) denotes the weighted norm
$$ \|u\|_{L^p(\Om;\, \de)}=\Big(\int_\Om |u(x)|^p \de(x) \, dx\Big)^{1/p}.$$
\end{lemma}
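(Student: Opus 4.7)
The plan is to combine three simple ingredients: Jensen's inequality applied to the convolution $S_\varepsilon f$, Fubini's theorem to exchange the order of integration, and the periodicity of $g$ (via rescaling) to convert a local $L^p$ average of $g(\cdot/\varepsilon)$ into $\|g\|_{L^p(Q)}$. The essential geometric observation is that for $x\in \Omega^{3\varepsilon}$ and $|x-y|<\varepsilon/2$, the point $y$ automatically lies in $\Omega^{2\varepsilon}$, and the weights $\delta(x)$ and $\delta(y)$ (respectively $\delta(x)^{-1}$ and $\delta(y)^{-1}$) are comparable with a universal constant. I will sketch the proof of \eqref{l2.2re2}; the proof of \eqref{l2.2re3} is identical with the obvious modification.

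Since $\varphi_\varepsilon\geq 0$ has unit integral and is supported in $B(0,\varepsilon/2)$, Jensen's inequality for $p\geq 1$ gives
$$|S_\varepsilon f(x)|^p\leq \int \varphi_\varepsilon(x-y)\,|f(y)|^p\,dy\leq C\varepsilon^{-d}\int_{B(x,\varepsilon/2)}|f(y)|^p\,dy.$$
Inserting this into the left-hand side of \eqref{l2.2re2}, absorbing $\|h\|_\infty$ into the constant, and applying Fubini, one obtains
$$\int_{\Omega^{3\varepsilon}}|g(x/\varepsilon)|^p|S_\varepsilon f(x)|^p\delta(x)\,dx \leq C\varepsilon^{-d}\int |f(y)|^p \Big(\int_{\Omega^{3\varepsilon}\cap B(y,\varepsilon/2)}|g(x/\varepsilon)|^p\delta(x)\,dx\Big)dy.$$
The inner integral is nonzero only when some $x\in \Omega^{3\varepsilon}$ satisfies $|x-y|<\varepsilon/2$; this forces $\delta(y)>2\varepsilon$, so the outer integration is effectively supported in $\Omega^{2\varepsilon}$, matching the right-hand side.

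For such $y$ the bound $|x-y|<\varepsilon/2\leq \delta(y)/4$ yields $\tfrac{3}{4}\delta(y)\leq \delta(x)\leq \tfrac{5}{4}\delta(y)$, hence $\delta(x)\leq 2\delta(y)$ (and, for \eqref{l2.2re3}, $\delta(x)^{-1}\leq 2\delta(y)^{-1}$). Pulling this pointwise factor out of the inner integral leaves the local average
$$\varepsilon^{-d}\int_{B(y,\varepsilon/2)}|g(x/\varepsilon)|^p\,dx=\int_{B(y/\varepsilon,1/2)}|g(z)|^p\,dz,$$
via the change of variables $z=x/\varepsilon$. Since a ball of radius $1/2$ is covered by a dimension-only number of unit cubes, periodicity of $g$ bounds this quantity by $C\|g\|_{L^p(Q)}^p$ uniformly in $y$. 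Combining the estimates and extracting $p$-th roots yields \eqref{l2.2re2}; for \eqref{l2.2re3} the same chain applies with $\delta$ replaced by $\delta^{-1}$. No step in this argument is particularly delicate; the only thing to handle with a little care is matching the support constraints so that the right-hand side is correctly supported on $\Omega^{2\varepsilon}$ as stated.
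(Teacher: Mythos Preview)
Your proof is correct and follows essentially the same approach as the paper: both arguments rely on the support constraint forcing $y\in\Omega^{2\varepsilon}$, the comparability $\delta(x)\approx\delta(y)$ when $|x-y|<\varepsilon/2$, Fubini's theorem, and the periodicity of $g$ to convert the local $\varepsilon$-scale average into $\|g\|_{L^p(Q)}^p$. The only cosmetic difference is that the paper uses a H\"older splitting of the convolution integral (writing $f(y)=f(y)\delta(y)^{1/p}\cdot\delta(y)^{-1/p}$ and bounding $\int\varphi_\varepsilon(x-y)\delta(y)^{-q/p}\,dy\leq C\delta(x)^{-q/p}$), whereas you invoke Jensen's inequality and then pull out the weight via $\delta(x)\leq 2\delta(y)$; these are interchangeable.
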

\begin{proof} Observe that
\begin{align}\label{pl2201}
&\int_{\R^d} \big|g(x/\varepsilon)h(x) \int_{\mb{R}^d} \varphi_\varepsilon(x-y)f(y)\, dy \big|^p\de(x)\, dx \nonumber\\
&\leq C\int_{\Om^{3\va}}  |g(x/\varepsilon) |^p  \int_{\Om^{2\va}}  \varphi_\varepsilon(x-y)|f(y)|^p \de(y)\, dy  \Big\{\int_{\Om^{2\va}}\varphi_\varepsilon(x-y) \de(y)^{-q/p} dy \Big\}^{p/q}\de(x)\, dx \nonumber\\
&\leq C \int_{\Om^{2\va}} \int_{\Om^{3\va}}   |g(x/\varepsilon)|^p    \varphi_\varepsilon(x-y)dx\,|f(y)|^p \de(y)\, dy \nonumber\\
&\leq C \int_{Q} |g(z)|^pdz \int_{\Om^{2\va}} |f(y)|^p \de(y)\, dy,
\end{align}
where we have used Fubini's theorem  and the observation $$\int_{\Om^{2\va}}\varphi_\varepsilon(x-y) [\de(y)]^{-q/p}\,  dy \leq C [\de(x)]^{-q/p}$$ for the second inequality.  This gives (\ref{l2.2re2}). The proof of (\ref{l2.2re3}) is the same. \end{proof}

\begin{lemma}\label{l2.3}
Let $  \widetilde{\Omega}_\varepsilon= \{x\in \mb{R}^d: \de(x)<\varepsilon\}, f\in H^\ell(\R^d), \ell\geq 0$. Then for any multi index $\alpha, |\alpha|=\ell$,
\begin{align}
&\|S_\varepsilon(D^\alpha f)\|_{L^p(\Omega_{\varepsilon})}\leq C \varepsilon^{-\ell}\|f\|_{L^p( \widetilde{\Omega}_{2\varepsilon})},\label{l2.3re1}\\
&\|S_\varepsilon(D^\alpha f)\|_{L^p(\Omega^{3\varepsilon};\, \de )}\leq C \varepsilon^{-\ell} \|f\|_{L^p(\Omega^{\varepsilon};\,\de)}.\label{l2.3re3}
\end{align}
\end{lemma}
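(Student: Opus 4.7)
The plan is to reduce both estimates to standard Young's and H\"older's inequalities for convolutions after first moving all $\ell$ derivatives from $f$ onto the mollifier $\varphi_\va$. Since $\varphi\in C_c^\infty(B(0,1/2))$, for any multi-index $\alpha$ with $|\alpha|=\ell$ distributional integration by parts gives
\begin{align*}
S_\va(D^\alpha f)(x)=(-1)^\ell\int_{\R^d}(D^\alpha\varphi_\va)(x-y)\, f(y)\, dy=\va^{-\ell}(\psi_\va\ast f)(x),
\end{align*}
with $\psi(y):=(-1)^\ell D^\alpha\varphi(y)\in C_c^\infty(B(0,1/2))$ and $\psi_\va(y)=\va^{-d}\psi(y/\va)$. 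This extracts the $\va^{-\ell}$ factor in front of a mollifier-type kernel whose $L^1$ norm is independent of $\va$ and whose support still lies in $B(0,\va/2)$.

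The estimate \eqref{l2.3re1} then follows quickly: if $x\in\Omega_\va$, so $\de(x)<\va$, then $|y-x|<\va/2$ forces $\de(y)<3\va/2<2\va$, hence $y\in\widetilde{\Omega}_{2\va}$. Thus $(\psi_\va\ast f)(x)$ on $\Omega_\va$ only sees $f|_{\widetilde{\Omega}_{2\va}}$, and a direct application of Young's convolution inequality finishes the job after multiplication by $\va^{-\ell}$.

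The weighted estimate \eqref{l2.3re3} requires one more observation: on the support of $\psi_\va(x-\cdot)$ the distances $\de(x)$ and $\de(y)$ are comparable whenever $x\in\Omega^{3\va}$. Indeed $\de(x)\geq 3\va$ and $|x-y|<\va/2$ force $\de(y)\geq 5\va/2$ (so in particular $y\in\Omega^\va$), and $\de(x)\leq\de(y)+\va/2\leq (6/5)\de(y)$. Given this, I would apply H\"older's inequality to $|\psi_\va\ast f(x)|^p$ with respect to the measure $|\psi_\va(x-y)|\, dy$ to get
\begin{align*}
|\psi_\va\ast f(x)|^p\leq \|\psi\|_{L^1}^{p-1}\int_{\R^d}|\psi_\va(x-y)|\,|f(y)|^p\, dy,
\end{align*}
then multiply by $\de(x)$, integrate over $\Omega^{3\va}$, swap order by Fubini, and use $\de(x)\leq (6/5)\de(y)$ on the support to transfer the weight from $x$ to $y$. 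The $\va^{-\ell p}$ rescaling then yields \eqref{l2.3re3}.

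The entire argument is essentially routine convolution calculus; the only place where real care is needed is the weighted estimate, where the factor $3$ in $\Omega^{3\va}$ is precisely what guarantees the comparability of $\de(x)$ and $\de(y)$ on the support of $\psi_\va(x-\cdot)$, so that the distance weight can cross the convolution with only a bounded multiplicative loss.
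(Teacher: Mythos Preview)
Your proof is correct and follows essentially the same route as the paper: move the derivatives onto the mollifier via integration by parts, use H\"older's inequality with respect to the kernel, and exploit the comparability $\de(x)\approx\de(y)$ on the support of $\psi_\va(x-\cdot)$ when $x\in\Omega^{3\va}$ to handle the weight. The only cosmetic difference is that the paper inserts the weight $\de(y)^{1/p}\de(y)^{-1/p}$ before applying H\"older (so the second factor carries $\de(y)^{-q/p}$ and cancels against the outer $\de(x)$), whereas you apply H\"older first and transfer the weight after Fubini; both arrive at the same place.
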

\begin{proof}
Inequality (\ref{l2.3re1}) was proved in \cite[Lemma 2.3]{nsx}, and the proof of (\ref{l2.3re3}) is quite similar. We provide it just for completeness.
\begin{align*}
&\|S_\varepsilon(D^\alpha f  )\|^p_{L^p(\Omega^{3\varepsilon};\,  \de )}=\int_{\Omega^{3\varepsilon}}  \Big|\int_{\mathbb{R}^d}
D^\alpha \varphi_\varepsilon (x-y)  f(y)  \, dy \Big|^p  \de(x) \, dx\nonumber\\
&\leq
\int_{\Omega^{3\varepsilon}} \int_{\Omega^{2\varepsilon}}
|D^\alpha\varphi_\varepsilon (x-y)|\, |f(y)|^p  \de(y)\, dy  \Big\{\int_{\Omega^{2\va}}
|D^\alpha\varphi_\varepsilon (x-y)|\, [\de(y)]^{-q/p}\, dy \Big\}^{p/q} \, \de(x)\, dx\nonumber\\
&\leq \frac{C}{\varepsilon^{p\ell}}
\int_{\Omega^{2\va}} |f(y) |^p  \de(y)\, dy,
\end{align*}
where  we have used Fubini's theorem and the observation
\begin{align*}
\int_{\Omega^{2\varepsilon}}
|D^\alpha\varphi_\varepsilon (x-y)|\, [\de(y)]^{-q/p} \, dy\leq C\int_{\Omega^{2\varepsilon}}
|D^\alpha\varphi_\varepsilon (x-y)|\, [\de(x)]^{-q/p} \, dy \leq C  \va^{-\ell} [\de(x)]^{-q/p}
\end{align*}for the last step.
\end{proof}

\begin{lemma}\label{l2.4}
Suppose that $f\in W^{1,q}(\R^d)$ for some $1< q<\infty$. Let $ \na^s f=(D^\al f)_{|\al|=s}.$ Then
\begin{align}\label{l2.4re1}
\|  S_\varepsilon(f)-f\|_{L^q(\Om^{2\va};\, \de)}\leq C \varepsilon   \|\nabla f\|_{L^q(\Om^\va;\, \de)}.
\end{align}
\end{lemma}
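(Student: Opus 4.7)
The inequality is a weighted version of the standard first-order approximation estimate for mollifiers. The plan is to use the fundamental theorem of calculus, exploit that $\varphi_\varepsilon$ is supported in $B(0,\varepsilon/2)$ so that only nearby values of $\nabla f$ enter the bound, and observe that on that small scale the weight $\delta$ is essentially constant.

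Concretely, for $x\in\Omega^{2\varepsilon}$ I would write
\[
 S_\varepsilon(f)(x)-f(x)=\int_{B(0,\varepsilon/2)}\varphi_\varepsilon(z)\bigl[f(x-z)-f(x)\bigr]\,dz=-\int_0^1\!\int_{B(0,\varepsilon/2)}\varphi_\varepsilon(z)\,z\cdot\nabla f(x-tz)\,dz\,dt.
\]
Since $|z|\leq \varepsilon/2$ on $\mathrm{supp}\,\varphi_\varepsilon$ and $\varphi_\varepsilon\geq 0$ with $\int\varphi_\varepsilon=1$, Jensen's inequality yields
\[
 |S_\varepsilon(f)(x)-f(x)|^q\leq C\,\varepsilon^q\int_0^1\!\int_{B(0,\varepsilon/2)}\varphi_\varepsilon(z)\,|\nabla f(x-tz)|^q\,dz\,dt.
\]

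Next I would multiply by $\delta(x)$, integrate over $\Omega^{2\varepsilon}$, and apply Fubini to swap the $dx$ integral with the $dz\,dt$ integrals. The key geometric fact is that for $x\in\Omega^{2\varepsilon}$, $|z|\leq \varepsilon/2$, $t\in[0,1]$, the point $y:=x-tz$ satisfies $\delta(y)\geq \delta(x)-\varepsilon/2>3\varepsilon/2$, so $y\in\Omega^{\varepsilon}$, and moreover $\delta(x)\leq \delta(y)+\varepsilon/2\leq 2\delta(y)$ (since $\delta(y)>3\varepsilon/2$). A translation $x\mapsto y=x-tz$, with Jacobian $1$ for each fixed $(z,t)$, therefore gives
\[
 \int_{\Omega^{2\varepsilon}}|\nabla f(x-tz)|^q\,\delta(x)\,dx\leq 2\int_{\Omega^{\varepsilon}}|\nabla f(y)|^q\,\delta(y)\,dy,
\]
and combining with $\int\varphi_\varepsilon(z)\,dz=1$ and $\int_0^1 dt=1$ then yields \eqref{l2.4re1}.

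The only real issue is weight bookkeeping: one must verify the comparability $\delta(x)\sim\delta(y)$ in the range $x\in\Omega^{2\varepsilon}$, $|x-y|\leq \varepsilon/2$, so that the change of variables maps into $\Omega^\varepsilon$ and produces the norm $\|\nabla f\|_{L^q(\Omega^\varepsilon;\delta)}$ on the right as stated. The scheme parallels that of \eqref{l2.3re3}, with the first-order Taylor remainder playing the role of $S_\varepsilon(D^\alpha f)$.
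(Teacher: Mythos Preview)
Your argument is correct. The paper itself does not give a proof; it simply cites \cite[Lemma~3.3]{xusiam2016} and \cite[Lemma~2.2]{shenan2017}, so there is no detailed in-paper argument to compare against. The direct approach you outline---fundamental theorem of calculus, Jensen's inequality against the probability measure $\varphi_\varepsilon(z)\,dz\,dt$, then Fubini and the change of variables $y=x-tz$ with the pointwise comparison $\delta(x)\leq 2\delta(y)$ on the relevant range---is precisely the standard proof one finds in those references, so your write-up is essentially a self-contained version of what the paper outsources.
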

\begin{proof}
See \cite[Lemma 3.3]{xusiam2016} and also \cite[Lemma 2.2]{shenan2017} for the case $q=2$.
\end{proof}


\begin{lemma}\label{l2.6}
Assume that $A$ satisfies (\ref{cod1})--(\ref{cod3}), and $u_\va\in H^m(B(x_0,R)\cap\Om; \mb{R}^n )$ is a solution to $\mc{L}_\va u=  \sum_{|\alpha|\leq m}D^\alpha f^\alpha$ in $B(x_0,R)\cap\Om$ with $Tr (D^\ga u_\varepsilon)=D^\ga G$ on $B(x_0,R)\cap \pa\Om$ for some $G\in H^{m}(B(x_0,R)\cap\Om; \mb{R}^n )$ where $x_0\in\pa\Om$. Let $ f^\alpha \in L^2(B(x_0,R)\cap\Om; \mb{R}^n )$ for $ |\alpha|\leq m$. Then for $0\leq j\leq m$ and $0<r<R$,  we have
\begin{align}\label{cac2}
\int_{B(x_0,r)\cap\Om} |\nabla^j (u_\va-G)|^2&\leq \frac{C}{(R-r)^{2j}} \int_{B(x_0,R)\cap\Om}(|u_\va|^2+|G|^2)+ C R^{2m-2j} \int_{B(x_0,R)\cap\Om}|\na^m G|^2 \nonumber\\&\quad +C\sum_{|\al|\leq m} R^{4m-2j-2|\al|} \int_{B(x_0,R)\cap\Om}  |f^\al|^2,\end{align}
where $C$  depends only on $d,n, m,\mu$ and $\Om$.
\end{lemma}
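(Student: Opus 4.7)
The plan is to prove this higher-order boundary Caccioppoli inequality by reducing to zero Dirichlet data, carrying out an energy estimate at the top order $j=m$, and using Gagliardo--Nirenberg interpolation for the intermediate orders.

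\textbf{Step 1 (reduction).} Set $v := u_\va - G \in H^m(B(x_0,R)\cap\Om;\R^n)$. By hypothesis $\mathrm{Tr}(D^\ga v) = 0$ on $B(x_0,R)\cap\pa\Om$ for $|\ga|\leq m-1$, and a direct computation shows that $v$ satisfies, weakly in $B(x_0,R)\cap\Om$,
$$\mc{L}_\va v = \sum_{|\al|\leq m} D^\al \tilde f^\al, \qquad \tilde f^\al := \begin{cases} f^\al, & |\al|<m,\\ f^\al + (-1)^{m+1}\sum_{|\be|=m}A^{\al\be}(x/\va)D^\be G, & |\al|=m.\end{cases}$$
By \eqref{cod1}, $|\tilde f^\al|^2 \leq 2|f^\al|^2 + C|\na^m G|^2$ when $|\al|=m$, so controlling the right-hand side of \eqref{cac2} in terms of $\tilde f^\al$ and $v$ is equivalent to controlling it in terms of $f^\al$, $G$, and $u_\va$.

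\textbf{Step 2 (top-order estimate).} For $r\leq \rho < \rho' \leq R$, pick $\eta \in C_c^\infty(B(x_0,\rho'))$ with $\eta \equiv 1$ on $B(x_0,\rho)$ and $|D^k\eta|\leq C(\rho'-\rho)^{-k}$ for $0\leq k\leq m$. The test function $\phi := \eta^{2m}v$ lies in $H^m_0(B(x_0,R)\cap\Om)$: Leibniz combined with the vanishing traces of $v$ on $B(x_0,R)\cap\pa\Om$ forces $\mathrm{Tr}(D^\ga \phi) = 0$ on that portion of the boundary for $|\ga|\leq m-1$, while $\eta^{2m}$ vanishes to sufficiently high order at $\pa B(x_0,\rho')\cap\Om$. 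Testing $\mc{L}_\va v$ against $\phi$, expanding $D^\al(\eta^{2m}v)$ by the Leibniz rule, isolating the principal term $\int\eta^{2m}|\na^m v|^2$ via the ellipticity \eqref{cod1}, and estimating the cross terms $|\na^k\eta|^{2(m-k)}|\na^k v|^2$ as well as the forcing contributions by Cauchy--Schwarz and Young's inequality yields, writing $B_\rho:=B(x_0,\rho)$,
$$\int_{B_\rho\cap\Om}|\na^m v|^2 \leq \tfrac12\int_{B_{\rho'}\cap\Om}|\na^m v|^2 + \frac{C}{(\rho'-\rho)^{2m}}\int_{B_R\cap\Om}|v|^2 + C\sum_{|\al|\leq m}(\rho'-\rho)^{2(m-|\al|)}\int_{B_R\cap\Om}|\tilde f^\al|^2.$$
A dyadic iteration on $\rho_i = r + (R-r)(1-2^{-i})$ absorbs the $\tfrac12\int|\na^m v|^2$ term and, after unwinding $\tilde f^\al$, delivers the $j=m$ case of \eqref{cac2}.

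\textbf{Step 3 (interpolation, $0\leq j<m$).} Apply the Gagliardo--Nirenberg inequality on the Lipschitz subdomain $B(x_0,r)\cap\Om$ at scale $R$,
$$\int_{B(x_0,r)\cap\Om}|\na^j v|^2 \leq C R^{2(m-j)}\int_{B(x_0,r)\cap\Om}|\na^m v|^2 + C R^{-2j}\int_{B(x_0,r)\cap\Om}|v|^2,$$
and insert the $j=m$ estimate from Step 2. Collecting powers, the $|\na^m G|^2$ contribution acquires the factor $R^{2(m-j)}$ (matching the claimed $R^{2m-2j}$), each $|f^\al|^2$ term picks up $R^{2(m-j)}\cdot R^{2(m-|\al|)} = R^{4m-2j-2|\al|}$, and the $|v|^2$ pieces merge into $(R-r)^{-2j}\int(|u_\va|^2+|G|^2)$ after comparing $R^{-2j}$ with $R^{2(m-j)}(R-r)^{-2m}$. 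This is precisely \eqref{cac2}.

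\textbf{Main obstacle.} The delicate point is the absorption of the Leibniz cross terms $|\na^k\eta|^{2(m-k)}|\na^k v|^2$ with $1\leq k\leq m-1$ in Step 2, since for $m\geq 2$ they are not directly dominated by the principal term $\int\eta^{2m}|\na^m v|^2$. The dyadic iteration over shrinking radii, combined with Young's inequality applied with carefully chosen exponents, resolves this by trading each intermediate derivative for $|v|^2$ on a slightly larger ball while preserving the optimal $(R-r)^{-2m}$ dependence of the constant.
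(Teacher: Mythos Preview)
Your overall strategy differs from the paper's. The paper tests against $v_\va\phi^2$ (not $\eta^{2m}v$), uses Poincar\'e's inequality on $v_\va\phi^2\in H^m_0$ to absorb the Leibniz remainders, and arrives at an inequality of the form
\[
\int_{B_r\cap\Om}|\na^m v|^2\leq \sum_{j=0}^{m-1}\frac{C}{(R-r)^{2(m-j)}}\int_{(B_R\setminus B_r)\cap\Om}|\na^j v|^2+C[\text{data}],
\]
with the intermediate derivatives living on the \emph{annulus}. From there the paper invokes an induction/hole--filling argument (Barton, Corollary~23) rather than interpolation. Your route---top--order Caccioppoli via a Giaquinta iteration, then Gagliardo--Nirenberg for $j<m$---is a legitimate alternative, and your handling of the ``main obstacle'' in Step~2 is essentially correct once one spells out the interpolation of $\|\na^k v\|_{L^2(B_{\rho'}\cap\Om)}$ between $\|\na^m v\|$ and $\|v\|$ with parameter $\delta=\epsilon(\rho'-\rho)^{2(m-k)}$.

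There is, however, a genuine gap in Step~3. The displayed Gagliardo--Nirenberg inequality on $B(x_0,r)\cap\Om$ ``at scale $R$'' is not valid as written: the natural diameter of that domain is $r$, and the additive lower--order term carries a factor $r^{-2j}$, not $R^{-2j}$. With the correct $r^{-2j}\int_{B_r\cap\Om}|v|^2$ term, your claimed merging into $(R-r)^{-2j}\int(|u_\va|^2+|G|^2)$ fails whenever $r<R/2$, since then $r^{-2j}\not\le C(R-r)^{-2j}$. Likewise the comparison you invoke, $R^{2(m-j)}(R-r)^{-2m}$ versus $(R-r)^{-2j}$, goes the wrong way when $r$ is close to $R$. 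The fix is to apply G--N not on $B_r\cap\Om$ but on the intermediate domain $B_{(R+r)/2}\cap\Om$ (whose diameter is $\sim R$, so the additive term is $\le CR^{-2j}\le C(R-r)^{-2j}$), use the free--parameter form with $\delta=(R-r)^{2(m-j)}$, and then insert the $j=m$ Caccioppoli on $B_{(R+r)/2}$ with outer radius $R$. With that adjustment your scheme goes through and yields exactly \eqref{cac2}.
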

\begin{proof}
It is obvious that
$v_\va=u_\va-G$ is a solution to
\begin{align*}
\begin{split}
&\mc{L}_\va v_\va=  \sum_{|\alpha|\leq m}D^\alpha f^\alpha + \sum_{|\alpha|=|\be|=m}D^\alpha\{A^{\al\be} D^\be G\} \quad\text{ in } B(x_0,R)\cap\Om,\\
 &Tr (D^\ga v_\va )=0   \quad\text{ on } B(x_0,R)\cap \pa\Om \quad\text{for } 0\leq |\ga|\leq m-1 .
 \end{split}
 \end{align*}
Let $\phi\in C_c^\infty (B(x_0,R))$ with $\phi=1$ in $B(x_0,r)$ and $|\na^k \phi|\leq C (R-r)^{-k}$. Multiplying $v_\va \phi^2$ and using   integration by parts,
we obtain that
\begin{align}\label{plcac1}
 \int_{B(x_0,R)\cap\Om} |\na^mv_\va|^2\phi^2
&\leq\sum_{|\alpha|\leq m} \Big\{C(\epsilon_0)R^{2m-2|\al|} \int_{B(x_0,R)\cap\Om} |f^\al|^2  +   \frac{\epsilon_0}{R^{2m-2|\al|}} \int_{B(x_0,R)\cap\Om} |D^\al (v_\va\phi^2)|^2 \Big\} \nonumber\\
&\quad+C(\epsilon_0) \sum_{|\alpha|=m} \int_{B(x_0,R)\cap\Om}  |D^\al G|^2 + \epsilon_0\int_{B(x_0,R)\cap\Om} |\nabla^m v_\va|^2\phi^2 \nonumber \\
&\quad+ \sum_{j=0}^{m-1}\frac{C\epsilon_0+C}{(R-r)^{2m-2j}}\int_{(B(x_0,R)\setminus B(x_0,r))\cap\Om} |\nabla^jv_\va|^2.
\end{align}
Note that $v_\va\phi^2 \in H^m_0( B(x_0,R)\cap\Om)$. Using Poincar\'{e}'s inequality  
and setting $\epsilon_0$ small enough, we may obtain from (\ref{plcac1}) that
\begin{align}\label{plcac2}
\int_{B(x_0,r)\cap\Om} |\na^m(u_\va-G)|^2&\leq \sum_{j=0}^{m-1}\frac{C}{(R-r)^{2m-2j}} \int_{(B(x_0,R)\setminus B(x_0,r))\cap\Om}|\na^j (u_\va-G)|^2 \nonumber\\ &+ C\Big\{\sum_{|\al|\leq m} R^{2m-2|\al|}\int_{B(x_0,R)\cap\Om} |f^\alpha|^2+ \sum_{|\al|= m} \int_{B(x_0,R)\cap\Om}|D^\al G|^2 \Big\},
\end{align}where $C$ depends only on $d,n, m$ and $\mu$, but never on $\va, R.$
 The estimate (\ref{cac2}) follows from (\ref{plcac2}) in the same way as Corollary 23 in \cite{barton2016} by an induction argument. \end{proof}

\begin{remark}\label{remark0}
It is possible to replace the $L^2$ norm of $f^\al$  with $|\al|<m$  in (\ref{cac2}) by the $L^p$ norm for some $1<p<2$. For example, assume that $f^\al=0$ for $1\leq |\al|\leq m$. We may prove that
\begin{align*}
\int_{B(x_0,r)\cap\Om} |\nabla^j (u_\va-G)|^2&\leq \frac{C}{(R-r)^{2j}} \int_{B(x_0,R)\cap\Om}\big(|u_\va|^2+|G|^2\big) + C R^{2m-2j} \int_{B(x_0,R)\cap\Om}|\na^m G|^2 \nonumber\\&
  +C R^{4m-2j+d-\frac{2d}{p}}\Big(\int_{B(x_0,R)\cap\Om} |f^0|^p\Big)^{\frac{2}{p}}, \quad  \text{for }  p>\max\{1,  2d/(d+2m)\}.
\end{align*}

\end{remark}

\section{Convergence rates in Lipschitz domains}
Let $0\leq\rho_\varepsilon \leq 1$ be a function in $C_c^\infty(\Omega)$
 with $ supp(\rho_\varepsilon)\subset \Omega^{3\varepsilon}, \rho_\varepsilon=1$  on $\Omega^{4\varepsilon} $  and $ |\nabla^m \rho_\varepsilon|\leq C \varepsilon^{-m}.$

 \begin{lemma}\label{l3.1}
Suppose that $\Omega$ is a bounded Lipschitz domain in $\mathbb{R}^d$,
and $ A$ satisfies (\ref{cod1})--(\ref{cod3}).
 Let $u_\varepsilon, u_0$ be the weak solutions
 to  Dirichlet problems (\ref{eq1}) and  (\ref{hoeq1}) respectively.
Define  \begin{align}\label{w}
w_\varepsilon = u_\varepsilon-u_0-\varepsilon^m\sum_{|\gamma|=m}\chi^\gamma(x/\va) S^2_\varepsilon(D^\gamma u_0 )\rho_\varepsilon.\end{align}
 Then for any $\phi \in H^m_0(\Omega;\mathbb{R}^n),$ we have
\begin{align}\label{l3.1re1}
 & \Big|\sum_{|\alpha|=|\beta|=m} \int_\Omega D^\alpha \phi_i  A_{ij}^{\alpha\beta}(x/\va)
 D^\beta w_{\varepsilon j}\Big| \nonumber\\
& \leq C \| \nabla^m \phi\|_{L^2(\Omega_{4\varepsilon})}\| \nabla^m u_0\|_{L^{2}(\Omega_{4\varepsilon})}
 + C  \| \nabla^m \phi\|_{L^2(\Omega_{4\varepsilon})} \sum_{ 0\leq k \leq m-1,  }\varepsilon^{k } \| S_\varepsilon(\nabla^{m+k}  u_0 )\|_{L^2(\Omega_{5\varepsilon}\setminus \Omega_{2\varepsilon})}\nonumber\\
 &+ C\| \nabla^m \phi\|_{L^2(\Omega^{2\va};\, \vartheta^{-1})} \|  \nabla^m u_0
 -S_\varepsilon(\nabla^m u_0)  \|_{L^2(\Omega^{2\varepsilon};\, \vartheta)}\nonumber\\
&+  C \|\nabla^m \phi\|_{L^2(\Omega^{2\va};\, \vartheta^{-1})} \sum_{0\leq k\leq m-1}\varepsilon^{m-k } \|  S_\varepsilon(\nabla^{2m-k}  u_0    )\|_{L^2(\Omega^{2\varepsilon};\, \vartheta)},
\end{align}
where $\vartheta(x)=\de(x)$ or $1$, $C$ depends only on $d, n, m, \mu$ and $\Omega.$
\end{lemma}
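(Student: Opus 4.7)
The plan is to expand the bilinear form $\sum_{|\alpha|=|\beta|=m}\int_\Omega D^\alpha\phi_i\, A^{\alpha\beta}_{ij}(x/\varepsilon) D^\beta w_{\varepsilon j}$ by substituting the definition of $w_\varepsilon$ and exploiting the corrector equation (\ref{corrector}) together with the flux corrector $\mathfrak{B}^{\gamma\alpha\beta}$. First, since $u_\varepsilon$ and $u_0$ solve (\ref{eq1}) and (\ref{hoeq1}) with the same right-hand side $f$ and the same Dirichlet data, integration by parts gives, for any $\phi\in H^m_0(\Omega;\mathbb{R}^n)$,
\[
\sum_{|\alpha|=|\beta|=m}\int_\Omega D^\alpha\phi_i\, A^{\alpha\beta}_{ij}(x/\varepsilon)D^\beta u_{\varepsilon j}
=\sum_{|\alpha|=|\beta|=m}\int_\Omega D^\alpha\phi_i\, \bar A^{\alpha\beta}_{ij}D^\beta u_{0 j}.
\]
Therefore the quantity we wish to bound equals
\[
\sum_{|\alpha|=|\beta|=m}\int_\Omega D^\alpha\phi_i\,\Big\{\bar A^{\alpha\beta}_{ij}D^\beta u_{0j}-A^{\alpha\beta}_{ij}(x/\varepsilon)D^\beta u_{0j}-\varepsilon^m A^{\alpha\beta}_{ij}(x/\varepsilon) D^\beta\!\big[\chi^\gamma_{jk}(x/\varepsilon)S^2_\varepsilon(D^\gamma u_{0k})\rho_\varepsilon\big]\Big\}.
\]

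Next I would apply the Leibniz rule to $D^\beta[\chi^\gamma(x/\varepsilon)S^2_\varepsilon(D^\gamma u_0)\rho_\varepsilon]$. The top-order term, where all $m$ derivatives fall on $\chi^\gamma(x/\varepsilon)$, combined with the bulk discrepancy $\bar A-A(x/\varepsilon)$ above, reproduces exactly the quantity $B^{\alpha\beta}_{ij}(x/\varepsilon)$ defined in (\ref{duc}), multiplied by $S^2_\varepsilon(D^\gamma u_0)\rho_\varepsilon$. All other terms in the Leibniz expansion are ``remainders''; in each of them either at least one derivative falls on $\rho_\varepsilon$ (producing an $\varepsilon^{-\ell}$ factor and support in the transition annulus where $0\le\rho_\varepsilon\le 1$ varies, essentially $\Omega_{5\varepsilon}\setminus\Omega_{4\varepsilon}$), or at least one derivative falls on $S^2_\varepsilon(D^\gamma u_0)$ (producing $S_\varepsilon$-type derivatives of higher order on $u_0$ scaled by $\varepsilon^{m-k}$). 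These remainders yield the $\sum_{k}\varepsilon^k\|S_\varepsilon(\nabla^{m+k}u_0)\|$-type contributions in the statement, estimated via Lemma \ref{l2.2} and Lemma \ref{l2.3}, after Cauchy--Schwarz against $\|\nabla^m\phi\|_{L^2(\Omega_{4\varepsilon})}$ or the weighted norm $\|\nabla^m\phi\|_{L^2(\Omega^{2\varepsilon};\vartheta^{-1})}$ as appropriate.

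The main term to deal with is
\[
\sum_{|\alpha|=|\beta|=m,|\gamma|=m}\int_\Omega D^\alpha\phi_i\,B^{\alpha\beta}_{ij}(x/\varepsilon)\,D^\gamma u_{0j}\,\big(1-\rho_\varepsilon S^2_\varepsilon\text{-operator inserted}\big),
\]
which I would split as (i) the piece $\int D^\alpha\phi_i\, B^{\alpha\beta}_{ij}(x/\varepsilon)\{D^\beta u_{0j}-S^2_\varepsilon(D^\beta u_{0j})\rho_\varepsilon\}$ and (ii) the piece $\int D^\alpha\phi_i\,B^{\alpha\beta}_{ij}(x/\varepsilon)S^2_\varepsilon(D^\beta u_{0j})\rho_\varepsilon$. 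Piece (i), after writing $D^\beta u_0-S^2_\varepsilon(D^\beta u_0)\rho_\varepsilon = (1-\rho_\varepsilon)D^\beta u_0+\rho_\varepsilon(D^\beta u_0-S^2_\varepsilon D^\beta u_0)$ and invoking Lemma \ref{l2.4}, delivers the $\|\nabla^m u_0\|_{L^2(\Omega_{4\varepsilon})}$ term and the weighted approximation term $\|\nabla^m u_0-S_\varepsilon(\nabla^m u_0)\|_{L^2(\Omega^{2\varepsilon};\vartheta)}$. Piece (ii) is where the flux corrector $\mathfrak B^{\gamma\alpha\beta}$ enters: since $\sum_{|\alpha|=m}D^\alpha B^{\alpha\beta}_{ij}=0$, we write $B^{\alpha\beta}_{ij}=\sum_{|\gamma|=m}D^\gamma\mathfrak B^{\gamma\alpha\beta}_{ij}$ with antisymmetry $\mathfrak B^{\gamma\alpha\beta}=-\mathfrak B^{\alpha\gamma\beta}$, and integrate by parts to move one derivative onto $S^2_\varepsilon(D^\beta u_0)\rho_\varepsilon$. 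The antisymmetry forces the resulting double-derivative on $\phi$ to cancel, leaving integrands that pick up an $\varepsilon^m$ scaling and land either on $S_\varepsilon(\nabla^{m+k}u_0)$ on the interior (giving the weighted $\vartheta$-estimate using Lemma \ref{l2.2} in weighted form) or on the cutoff $\rho_\varepsilon$ (supported in $\Omega_{5\varepsilon}\setminus\Omega_{2\varepsilon}$).

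The main obstacle I expect is the bookkeeping near the boundary: the cutoff $\rho_\varepsilon$ is introduced precisely because $\chi^\gamma(x/\varepsilon)S^2_\varepsilon(D^\gamma u_0)$ need not vanish on $\partial\Omega$, and one must simultaneously (a) keep the weighted norms $\|\cdot\|_{L^2(\Omega^{2\varepsilon};\vartheta)}$ and $\|\cdot\|_{L^2(\Omega^{2\varepsilon};\vartheta^{-1})}$ balanced under Cauchy--Schwarz, and (b) control the $\varepsilon^{-k}$ blow-up of $\nabla^k\rho_\varepsilon$ by the small measure of the transition layer $\Omega_{5\varepsilon}\setminus\Omega_{4\varepsilon}$ together with the $L^2$-smoothing estimates (\ref{l2.3re1}) and (\ref{l2.3re3}). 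Once these regions are handled consistently with Lemmas \ref{l2.2}--\ref{l2.4} and the $L^\infty$ bounds on periodic functions of $x/\varepsilon$ are absorbed into the constants via the averages $\|\chi^\gamma\|_{L^2(Q)}$ and $\|\mathfrak B^{\gamma\alpha\beta}\|_{L^2(Q)}$, adding up the four groups of remainder terms yields exactly the right-hand side of (\ref{l3.1re1}).
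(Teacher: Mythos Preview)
Your proposal is correct and follows essentially the same strategy as the paper. In fact the paper gives no self-contained argument here at all: its proof consists solely of the sentence ``See \cite[Lemma 3.1]{nsx} for $\vartheta\equiv 1$; the case $\vartheta(x)=\delta(x)$ is almost the same with the help of Lemmas \ref{l2.2}, \ref{l2.3} and \ref{l2.4},'' and the computation you outline---using the equations for $u_\varepsilon$ and $u_0$ to replace $A(x/\varepsilon)D^\beta u_\varepsilon$ by $\bar A D^\beta u_0$, expanding $D^\beta[\chi^\gamma(x/\varepsilon)S^2_\varepsilon(D^\gamma u_0)\rho_\varepsilon]$ via Leibniz, recombining the top-order piece into $B^{\alpha\beta}$, and then invoking the flux corrector $\mathfrak B^{\gamma\alpha\beta}$ with its antisymmetry to integrate by parts---is exactly the argument carried out in that reference, with the weighted Cauchy--Schwarz and Lemmas \ref{l2.2}--\ref{l2.4} supplying the $\vartheta$-version.

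One small bookkeeping remark: in your split into pieces (i) and (ii), the term multiplying $D^\beta u_0-S^2_\varepsilon(D^\beta u_0)\rho_\varepsilon$ is really $(\bar A^{\alpha\beta}-A^{\alpha\beta}(x/\varepsilon))$ rather than $B^{\alpha\beta}$ (the full $B$ appears only against $S^2_\varepsilon(D^\beta u_0)\rho_\varepsilon$), but since both are bounded periodic functions this does not affect the estimate.
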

\begin{proof}
See \cite[Lemma 3.1] {nsx}  for  $\var\equiv1$. The proof for the case $\vartheta(x)=\de(x)$ is almost the same with the help of Lemmas \ref{l2.2},  \ref{l2.3} and  \ref{l2.4}.
\end{proof}

\begin{lemma}\label{l3.2}
Let $\Omega$ be a bounded Lipschitz domain in $\mathbb{R}^d$. Let $A$ satisfy (\ref{cod1})--(\ref{cod3}) and let $u_0$ be the weak solution to the homogenized problem (\ref{hoeq1}) with $\dot{g}
 \in W\!A^{m,2}(\partial\Omega;\mathbb{R}^n) , f\in H^{-m+1}(\Omega;\mathbb{R}^n)$. Then for any $0<\nu< 1/2 ,$
\begin{align}
&\|\na^m u_0\|_{L^2(\Om_{2\va})}\leq C_\nu \varepsilon^{1/2-\nu}  \left\{\|\dot{g}\|_{W\!A^{m,2}(\partial\Omega)}+\|f\|_{H^{-m+1} (\Omega)}\right\},\label{l3.2re1}\\
&\|\na^{m+1} u_0\|_{L^2(\Om^{2\va})}\leq C_\nu \varepsilon^{-1/2-\nu}  \left\{\|\dot{g}\|_{W\!A^{m,2}(\partial\Omega)}+\|f\|_{H^{-m+1} (\Omega)}\right\},\label{l3.2re1'}\\
&\|\na^m u_0\|_{L^2(\Om^{2\va};\, \de^{-1} )}\leq C_\nu \varepsilon^{-\nu }  \left\{\|\dot{g}\|_{W\!A^{m,2}(\partial\Omega)}+\|f\|_{H^{-m+1} (\Omega)}\right\},\label{l3.2re2}\\
&\|\na^{m+1}u_0\|_{L^{2}(\Om^{2\va};\, \de )}\leq C_\nu \varepsilon^{-\nu}  \left\{\|\dot{g}\|_{W\!A^{m,2}(\partial\Omega)}+\|f\|_{H^{-m+1} (\Omega)}\right\},\label{l3.2re3}
\end{align}
where $C_\nu$ depends only on $ d,n,m,\nu,\mu$ and $\Om$.
\end{lemma}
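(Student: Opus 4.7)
The plan is to extract the sharp fractional Sobolev regularity of $u_0$ on the Lipschitz domain $\Om$, and then to combine it, via a fractional Hardy inequality, with interior Caccioppoli--type estimates for the constant-coefficient operator $\mc{L}_0$. All four estimates then reduce to elementary manipulations of the weight $\de$.

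The first step is the sharp regularity gain. By the classical Lipschitz-domain regularity theory for higher-order elliptic systems with constant coefficients (in the spirit of Jerison--Kenig for second-order equations, and its extension to higher-order operators by Agranovich, Mitrea--Mitrea--Mitrea, and others), one has, for every $0<\nu<1/2$,
\[
\|u_0\|_{H^{m+\frac{1}{2}-\nu}(\Om)}\leq C_\nu\bigl\{\|\dot g\|_{W\!A^{m,2}(\partial\Om)}+\|f\|_{H^{-m+1}(\Om)}\bigr\}=:C_\nu\Phi.
\]
Combined with the fractional Hardy inequality on Lipschitz domains (for $0<s<1/2$, $\int_\Om|v|^2\de^{-2s}\,dx\leq C_s\|v\|_{H^s(\Om)}^2$), applied to $v=\na^m u_0$ with $s=\tfrac{1}{2}-\nu$, this yields the master bound
\[
\int_\Om|\na^m u_0|^2\,\de^{2\nu-1}\,dx\leq C_\nu\Phi^2. \qquad(\star)
\]
Estimates \eqref{l3.2re1} and \eqref{l3.2re2} then follow from $(\star)$ by elementary weight splitting: on $\Om_{2\va}$, $\de<2\va$ yields $1\leq(2\va)^{1-2\nu}\de^{2\nu-1}$, so integrating against $(\star)$ gives \eqref{l3.2re1}; on $\Om^{2\va}$, $\de\geq 2\va$ yields $\de^{-1}=\de^{-2\nu}\cdot\de^{2\nu-1}\leq(2\va)^{-2\nu}\de^{2\nu-1}$, so integrating against $(\star)$ gives \eqref{l3.2re2}.

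For \eqref{l3.2re1'} and \eqref{l3.2re3}, I exploit the fact that the constant coefficients of $\mc{L}_0$ make $u_0$ smooth on $\Om^\va$ and provide the interior Caccioppoli estimate: for $x_0\in\Om^\va$ and $r=\de(x_0)/4$ (so $B(x_0,2r)\subset\Om$),
\[
\int_{B(x_0,r)}|\na^{m+1}u_0|^2\,dx\leq\frac{C}{r^2}\int_{B(x_0,2r)}|\na^m u_0|^2\,dx+\text{(l.o.t.\ from }f\text{)}.
\]
A Whitney-type covering of $\Om^{2\va}$ by balls $\{B(x_i,r_i)\}$ with $r_i\sim\de(x_i)$ and finite overlap (on which $\de(x)\sim r_i$) then converts this into
\[
\int_{\Om^{2\va}}|\na^{m+1}u_0|^2\,\de^\alpha\,dx\leq C\int_{\Om^\va}|\na^m u_0|^2\,\de^{\alpha-2}\,dx+\text{l.o.t.}
\]
Choosing $\alpha=1$ and using $\de^{-1}\leq\va^{-2\nu}\de^{2\nu-1}$ on $\Om^\va$ together with $(\star)$ yields \eqref{l3.2re3}; choosing $\alpha=0$ and using $\de^{-2}=\de^{-1-2\nu}\cdot\de^{2\nu-1}\leq\va^{-1-2\nu}\de^{2\nu-1}$ on $\Om^\va$ together with $(\star)$ yields \eqref{l3.2re1'}.

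The hardest step is securing the sharp fractional regularity $u_0\in H^{m+\frac{1}{2}-\nu}(\Om)$ for constant-coefficient $2m$-th order systems with $W\!A^{m,2}$ Dirichlet data and $H^{-m+1}$ right-hand side on merely Lipschitz domains. For second-order scalar problems this is the classical Jerison--Kenig $H^{3/2}$-theorem; its higher-order extension requires more delicate tools (harmonic analysis on Lipschitz boundaries or layer-potential methods adapted to $\mc{L}_0$). Once this regularity is in hand, the remaining manipulations reduce to the routine Hardy and Caccioppoli arguments outlined above.
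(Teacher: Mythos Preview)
Your proof is correct and uses the same key ingredients as the paper---Besov/fractional-Sobolev regularity on Lipschitz domains, the weighted Hardy-type inequality from Grisvard, and interior estimates for the constant-coefficient operator $\mc{L}_0$---but organizes them differently. The paper makes explicit the decomposition $u_0=v_0+v$, where $v_0$ solves $\mc{L}_0v_0=\tilde f$ with zero Dirichlet data on a smooth domain $\widehat\Om\supset\Om$ (yielding full $H^{m+1}$ regularity via standard elliptic theory) and $v$ solves the homogeneous equation $\mc{L}_0v=0$ in $\Om$ with Dirichlet data $g_\gamma-D^\gamma v_0$; Agranovich's Theorems~$3'$ and~$5'$ then give $v\in B_2^{m-1/2+s}(\Om)$ for all $1/2<s<1$, and Grisvard's Theorems~1.4.2.4 and~1.4.4.4 yield exactly your master bound $(\star)$ for $v$. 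This decomposition is, in effect, what \emph{proves} the fractional regularity $u_0\in H^{m+1/2-\nu}(\Om)$ that you invoke as a black box, so the hardest step you flag is resolved there. For \eqref{l3.2re1'} and \eqref{l3.2re3} the paper uses the pointwise interior estimate $|\na^{m+1}v(x)|\leq C\de(x)^{-1}\bigl(\fint_{B(x,\de(x)/8)}|\na^m v|^2\bigr)^{1/2}$ on the homogeneous part $v$ (the $f$ contribution being absorbed by the $H^{m+1}$ bound on $v_0$), whereas your Caccioppoli-plus-Whitney argument treats $u_0$ directly and carries the $f$ terms through as ``l.o.t.''; the two are equivalent. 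For \eqref{l3.2re2} your route via $(\star)$ is in fact slightly cleaner: the paper handles the $v_0$ piece separately via the co-area formula and picks up a $\ln(1/\va)$ factor that must then be absorbed into $\va^{-2\nu}$, while your unified master bound avoids this detour entirely.
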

\begin{proof}
Recall that  $f \in H^{-m+1}(\Om)$ can be written as
\begin{align*}
f=\sum_{|\zeta|\leq m-1} D^\zeta f^\zeta ~~\text{ with } ~~ \|f\|_{H^{-m+1}(\Omega)} \approx
 \sum_{|\zeta|\leq m-1} \|f^\zeta\|_{L^{2}(\Omega)}.
\end{align*}
Let $ \widetilde{f}^\zeta $ be the extension of $f^\zeta$, being zero in $\R^d\setminus\Omega.$ Let $\widehat{\Omega} $ be a smooth bounded domain such that $\Om\subset \widehat{\Om}$. Let $v_0$ be the solution to
\begin{align}\label{pt312}
  \mc{L}_0 v_0=\sum_{|\zeta|\leq m-1}D^\ze\widetilde{f}^\ze  \quad \text{in }  \widehat{\Om} ,\quad\quad  Tr(D^\ga v_0)=0 \quad\text{on }  \pa\widehat{\Om} \quad \text{for } 0\leq |\ga|\leq m-1.
  \end{align}
  Standard regularity estimates for higher order elliptic systems give  that  \begin{align}
  &\sum_{1\leq \ell\leq m+1}\|\nabla^{\ell} v_0\|_{L^{2}(\widehat{\Omega})}\leq  C \|f\|_{H^{-m+1}(\Omega)}.    \label{pt315}
  \end{align}
Denote  $\Sigma_t= \{x\in \Om: dist(x,\Omega)=t\}, 0\leq t\leq c_0.$  Similar to \cite[Theorem 3.1]{nsx} (see (3.23) and (3.24) therein), by trace theorem and co-area formula, we may prove that
\begin{align}
 &\sum_{1\leq \ell\leq m} \|\na^\ell v_0\|_{L^2(\Sigma_t)}\leq C \|f\|_{H^{-m+1}(\Omega)},\label{pt316}\\
  &\sum_{1\leq \ell\leq m} \|\na^\ell v_0\|_{L^2(\Omega_\varepsilon)}\leq C\varepsilon^{1/2}
 \|f\|_{H^{-m+1}(\Omega)}.\label{pt317}
\end{align}

On the other hand, setting $u_0(x)=v_0(x)+v(x),$ we have
\begin{align}\label{eqv}
\mathcal{L}_0 v=0 \quad\text{in } \Omega,  \quad   Tr(D^\gamma v)=g_\ga-D^\ga v_0  \quad\text{on }  \pa\Omega\,  \quad\text{for  } 0\leq|\ga|\leq m-1.
\end{align}
Thanks to Theorem  $3'$ and Theorem $5'$ in \cite{agran2007}, we have $ v \in B_2^{m-1/2+s}(\Om)$ for any $ 1/2 \!<\!s\!<\!1$, and,
\begin{align}\label{pt318}
\|v\|_{B^{m-1/2+s}_2(\Omega)} &\leq C_s \big\{\|\dot{g}\|_{W\!A^{m,2}(\partial\Omega)}+\|\dot{v}_0\|_{W\!A^{m,2}(\partial\Omega)} \big\}
\nonumber \\
 &\leq C_s\big\{\|\dot{g}\|_{W\!A^{m,2}(\partial\Omega)}+  \|f\|_{H^{-m+1}(\Omega)}\big\},
\end{align}
where $\dot{v}_0=\big\{D^\ga v_0|_{\pa \Om}\big\}_{|\ga|\leq m-1} $, and (\ref{pt316}) has been  used for the last step. Therefore, we have $D^\al v \in B_2^{s-1/2}(\Om)$ for $|\al|=m$. Thanks to Theorems 1.4.2.4 and 1.4.4.4 in \cite{grisvard},
 \begin{align}\label{pt319}
 \int_\Om |\na^m v(x)|^2 \de(x)^{1-2s}dx\leq C_s \|v\|^2_{B^{m-1/2+s}_2(\Omega)}\leq C_s\big\{\|\dot{g}\|^2_{W\!A^{m,2}(\partial\Omega)}+  \|f\|^2_{H^{-m+1}(\Omega)}\big\}.
\end{align}
This implies that
\begin{align}\label{pt320}
  \int_{\Om_{2\va}} |\na^m v(x)|^2   \, dx &=  \int_{\Om_{2\va}} |\na^m v(x)|^2 \de(x)^{1-2s} \de(x)^{2s-1}\, dx \nonumber\\
 &\leq C_s  \va^{2s-1}\big\{  \|\dot{g}\|^2_{W\!A^{m,2}(\partial\Omega)}+  \|f\|^2_{H^{-m+1}(\Omega)}   \big\}
\end{align} for any $  1/2 \!<\!s\!<\!1$.
By combining (\ref{pt317}) with (\ref{pt320}), we derive  (\ref{l3.2re1}) with $\nu=1-s$.

In view of (\ref{eqv}) and interior estimates for higher order elliptic systems with constant coefficients, we have
\begin{align*}
|\nabla^{m+1} v(x)|\leq \frac{C}{\delta(x)} \Big(\fint_{B(x, \frac{\delta(x)}{8})} |\nabla^{m} v |^2\Big)^{1/2}.
\end{align*}
Thus using (\ref{pt319}) we deduce  that
\begin{align}\label{pt322}
\|\nabla^{m+1} v \|^2_{L^2(\Omega^{2\varepsilon})}
&\leq   C    \int_{\Omega^{2\varepsilon}}\frac{1}{\delta(x)^{3-2s}}
\fint_{B(x, \frac{\delta(x)}{8})} |\nabla^{m} v(y)|^2 \de(y)^{1-2s}dy\, dx \nonumber\\
&\leq C\varepsilon^{2s-3} \| \nabla^m v\|^2_{L^2(\Omega;\, \de^{1-2s})} \nonumber\\
& \leq C_s\varepsilon^{ 2s-3} \left\{  \|\dot{g}\|^2_{W\!A^{m,2}(\partial\Omega)}+  \|f\|^2_{H^{-m+1}(\Omega)}\right\},
\end{align}
which, together with (\ref{pt315}), gives (\ref{l3.2re1'}).

For (\ref{l3.2re2}), it is easy to conclude from (\ref{pt318}) and (\ref{pt319}) that
 \begin{align}\label{pt321}
 \int_{\Om^{2\va}} |\na^m v(x)|^2 \de(x)^{-1}\, dx &=  \int_{\Om^{2\va}} |\na^m v(x)|^2 \de(x)^{1-2s}\de(x)^{2s-2}\, dx\nonumber\\
&\leq C_s \va^{2s-2}\big\{  \|\dot{g}\|^2_{W\!A^{m,2}(\partial\Omega)}+  \|f\|^2_{H^{-m+1}(\Omega)}   \big\}.
\end{align}
On the other hand, by the co-area formula  we deduce that
 \begin{align}\label{pt321'}
 \int_{\Om^{2\va}} |\na^m v_0(x)|^2 \de(x)^{-1}\, dx &= \int_{\Om^{2\va}\setminus\Om^{c_0} } |\na^m v_0(x)|^2 \de(x)^{-1}\, dx +\int_{\Om^{c_0}} |\na^m v_0(x)|^2 \de(x)^{-1}\, dx\nonumber\\
 &=  \int_{2\va}^{c_0}\int_{\Sigma_t} |\na^m v_0(x)|^2 \frac{1}{t} dS dt+ C \int_{\Om^{c_0}} |\na^m v_0(x)|^2 \, dx\nonumber\\
&\leq C  \ln(1/\va)\, \big\{  \|\dot{g}\|^2_{W\!A^{m,2}(\partial\Omega)}+  \|f\|^2_{H^{-m+1}(\Omega)}   \big\}
\end{align}
 for $0<\va<1/2$, where (\ref{pt316}) is used for the last inequality. This, combined with (\ref{pt321}), gives (\ref{l3.2re2}).
The proof for (\ref{l3.2re3}) is the same as (\ref{l3.2re1'}), thus we omit the details.
\end{proof}

\begin{lemma}\label{l3.3}
Suppose that the assumptions of Lemma \ref{l3.2} are satisfied, and $A$ is symmetric, i.e. $A=A^*$. Then for $0<\va< 1/2$,
\begin{align}
&\|\na^m u_0\|_{L^2(\Om_{2\va}  )}\leq C \varepsilon^{1/2}  \big\{\|\dot{g}\|_{W\!A^{m,2}(\partial\Omega)}+\|f\|_{H^{-m+1} (\Omega)}\big\},\label{l3.3re1}\\
&\|\na^m u_0\|_{L^2(\Om^{2\va};\, \de^{-1} )}\leq C\,  [\ln(1/\va)]^{1/2}   \big\{\|\dot{g}\|_{W\!A^{m,2}(\partial\Omega)}+\|f\|_{H^{-m+1} (\Omega)}\big\},\label{l3.3re2}\\
&\|\na^{m+1}u_0\|_{L^{2}(\Om^{2\va};\, \de )}\leq C\, [\ln(1/\va)]^{1/2}  \big\{\|\dot{g}\|_{W\!A^{m,2}(\partial\Omega)}+\|f\|_{H^{-m+1} (\Omega)}\big\},\label{l3.3re3}
\end{align}
where $C $ depends only on $ d,n,m,\mu$ and $\Om$.
\end{lemma}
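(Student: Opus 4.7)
I would follow the splitting $u_0 = v_0 + v$ used in Lemma \ref{l3.2}: $v_0$ solves $\mathcal{L}_0 v_0 = \sum_{|\zeta|\leq m-1}D^\zeta \widetilde{f}^\zeta$ in a smooth domain $\widehat{\Omega}\supset\Omega$ with zero Dirichlet data (so the smooth interior and trace estimates (\ref{pt315})--(\ref{pt317}) apply), while $v=u_0-v_0$ solves $\mathcal{L}_0 v = 0$ in $\Omega$ with Whitney data $\dot{g}-\dot{v}_0 \in W\!A^{m,2}(\partial\Omega)$. For the $v_0$-part, the computations leading to (\ref{pt317}) and (\ref{pt321'}) go through unchanged and already deliver exactly the factors $\varepsilon^{1/2}$, $[\ln(1/\varepsilon)]^{1/2}$, and a uniform bound for the three $v_0$ contributions to (\ref{l3.3re1})--(\ref{l3.3re3}); none of this uses symmetry.

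\medskip

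The assumption $A=A^*$ enters only through the $v$-part, where it lets me push the Besov regularity (\ref{pt318}) to the endpoint $s=1$. For Dirichlet problems associated with a symmetric constant-coefficient higher-order elliptic system on a Lipschitz domain, Rellich identities and nontangential maximal function estimates---the higher-order counterparts of the second-order theory employed in \cite{klsal2,xusiam2016,shenan2017}---yield
\begin{equation*}
\|v\|_{B^{m+1/2}_2(\Omega)}\leq C\big\{\|\dot{g}\|_{W\!A^{m,2}(\partial\Omega)}+\|\dot{v}_0\|_{W\!A^{m,2}(\partial\Omega)}\big\}\leq C\big\{\|\dot{g}\|_{W\!A^{m,2}(\partial\Omega)}+\|f\|_{H^{-m+1}(\Omega)}\big\},
\end{equation*}
which via Theorem 1.4.4.4 in \cite{grisvard}, applied to each $D^\alpha v$ with $|\alpha|=m$, translates to the endpoint weighted estimate
\begin{equation*}
\int_{\Omega}|\nabla^m v|^2 \delta(x)^{-1}\,dx \leq C\|v\|_{B^{m+1/2}_2(\Omega)}^{2}\leq C\big\{\|\dot{g}\|_{W\!A^{m,2}(\partial\Omega)}+\|f\|_{H^{-m+1}(\Omega)}\big\}^2.
\end{equation*}

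\medskip

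From this single weighted bound all three $v$-estimates follow mechanically. Since $\delta(x)\leq 2\varepsilon$ on $\Omega_{2\varepsilon}$, inserting the factor $\delta^{-1}\delta$ gives $\|\nabla^m v\|_{L^2(\Omega_{2\varepsilon})}^2 \leq 2\varepsilon\int_\Omega|\nabla^m v|^2 \delta^{-1}\,dx$, which furnishes the $\varepsilon^{1/2}$ scaling in (\ref{l3.3re1}). The weighted estimate itself is uniform in $\varepsilon$, so the $[\ln(1/\varepsilon)]^{1/2}$ loss in (\ref{l3.3re2}) comes entirely from the $v_0$ co-area computation (\ref{pt321'}). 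For (\ref{l3.3re3}) I would use the interior Caccioppoli-type bound $|\nabla^{m+1}v(x)| \leq C\delta(x)^{-1}(\fint_{B(x,\delta(x)/8)}|\nabla^m v|^2)^{1/2}$, multiply by $\delta(x)$, and swap the order of integration; the double integral collapses to a constant multiple of $\int_\Omega|\nabla^m v|^2 \delta^{-1}\,dx$, yielding a uniform (hence better than $[\ln(1/\varepsilon)]^{1/2}$) bound. Combining the $v_0$ and $v$ contributions delivers (\ref{l3.3re1})--(\ref{l3.3re3}).

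\medskip

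The main obstacle is the endpoint Besov claim in the middle step: extending the second-order symmetric Dahlberg-Rellich machinery to the higher-order symmetric operator $\mathcal{L}_0=\mathcal{L}_0^*$ on a Lipschitz domain, in order to reach $s=1$ in the range of Theorems 3$'$--5$'$ of \cite{agran2007}. For $m=1$ this is classical; for general $m$ one follows the same identity-based template, but the multi-index integration-by-parts and the bookkeeping of tangential/normal decompositions of $\nabla^m v$ on $\partial\Omega$ must be carried out carefully. Once this ingredient is granted, the remainder of the proof is a direct unwinding of the weighted estimate as outlined above.
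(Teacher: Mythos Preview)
There is a genuine gap, and it sits not where you flagged it but one step later. The endpoint weighted inequality you invoke---that $D^\alpha v\in B_2^{1/2}(\Omega)$ implies $\int_\Omega |D^\alpha v|^2\,\delta(x)^{-1}\,dx\leq C\|D^\alpha v\|_{B_2^{1/2}}^2$---is precisely the critical case that Grisvard's Theorems 1.4.2.4 and 1.4.4.4 \emph{exclude}. The Hardy inequality $\|u/\delta^s\|_{L^2}\leq C\|u\|_{H^s}$ holds for $0<s<1/2$ (because $H^s_0=H^s$ there) but fails at $s=1/2$: this is the Lions--Magenes phenomenon, where $H^{1/2}_{00}(\Omega)\subsetneq H^{1/2}(\Omega)$. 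Since $\nabla^m v$ has no reason to vanish on $\partial\Omega$, you cannot retreat to the $H^{1/2}_{00}$ version either. In fact your conclusion is too strong to be true: even granting the nontangential bound $\mathcal{M}(\nabla^m v)\in L^2(\partial\Omega)$, the co-area formula gives $\int_{\Omega^{2\varepsilon}}|\nabla^m v|^2\,\delta^{-1}\,dx\lesssim\int_{2\varepsilon}^{c_0}t^{-1}\|\mathcal{M}(\nabla^m v)\|_{L^2(\partial\Omega)}^2\,dt\sim\ln(1/\varepsilon)$, not a uniform bound. So the $v$-part genuinely contributes a $[\ln(1/\varepsilon)]^{1/2}$ loss in (\ref{l3.3re2}) and (\ref{l3.3re3}); it does not come from $v_0$ alone.

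The paper sidesteps the endpoint Hardy issue entirely. It never passes through $B_2^{m+1/2}$; instead it uses the nontangential maximal function estimate $\|\mathcal{M}(\nabla^m v)\|_{L^2(\partial\Omega)}\leq C\{\|\dot g\|_{W\!A^{m,2}(\partial\Omega)}+\|f\|_{H^{-m+1}(\Omega)}\}$, available for symmetric constant-coefficient higher-order systems in Lipschitz domains via \cite{verchota1996}, \emph{directly} together with the co-area formula. For (\ref{l3.3re1}) one simply integrates the pointwise domination $|\nabla^m v|\leq\mathcal{M}(\nabla^m v)$ over a boundary layer of thickness $2\varepsilon$. For (\ref{l3.3re2}) and (\ref{l3.3re3}) one writes $\int_{\Omega^{2\varepsilon}}(\cdots)\,\delta^{-1}\,dx=\int_{2\varepsilon}^{c_0}t^{-1}\int_{\Sigma_t}(\cdots)\,dS\,dt+O(1)$, bounds the slice integral by $\|\mathcal{M}(\nabla^m v)\|_{L^2(\partial\Omega)}^2$, and picks up $\ln(1/\varepsilon)$ from the $t^{-1}$ integration. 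The ``main obstacle'' you identify (extending the Rellich/nontangential machinery to higher order) is thus already handled in the literature; the step that actually breaks is the Besov-to-weighted-$L^2$ passage you treated as routine.
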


\begin{proof}
The proof is the same as that of Lemma \ref{l3.2}  except for three places. Firstly, since $A$ is symmetric, in place of   (\ref{pt319}) we have the nontangential maximum function estimate, see e.g.
\cite[Theorem 6.1]{verchota1996},
\begin{align}\label{pt330}
 \|  \mathcal{M}(\nabla^m v)\|_{L^2(\partial \Omega)}
 \leq C\|\dot{v}\|_{W\!A^{m,2}(\partial\Omega)}
 \leq C\big\{\|\dot{g}\|_{W\!A^{m,2}(\partial\Omega)}+  \|f\|_{H^{-m+1}(\Omega)}\big\},
\end{align}
where $\mathcal{M}(\nabla^m v) $ denotes the nontangential maximal function of $\nabla^m v$.
Therefore, instead of (\ref{pt320})  we have
 \begin{align*}
 \int_{\Om_{2\va}} |\na^m v(x)|^2  \, dx
 \leq C \va\left\{ \|\dot{g}\|^2_{W\!A^{m,2}(\partial\Omega)}+  \|f\|^2_{H^{-m+1}(\Omega)}   \right\},
\end{align*}
which, combined with (\ref{pt317}), implies (\ref{l3.3re1}).

Secondly, in substitution for (\ref{pt321})  we  use the nontangential estimate (\ref{pt330}) and the co-area formula to deduce that
\begin{align*}
 \int_{\Om^{2\va}} |\na^m v(x)|^2 \de(x)^{-1}\, dx &= C   \int_{2\va}^{c_0}\int_{\Sigma_t} \frac{1}{t}|\na^m v(x)|^2 dSdt+C\int_{\Om^{c_0}} |\na^m v(x)|^2 \, dx  \nonumber\\
&\leq C\ln(1/\va)\left\{  \|\dot{g}\|^2_{W\!A^{m,2}(\partial\Omega)}+  \|f\|^2_{H^{-m+1}(\Omega)}   \right\},
\end{align*}
which, combined with (\ref{pt321'}), gives (\ref{l3.3re2}).

Finally, instead of (\ref{pt322}), we have
 \begin{align*}
 \|\nabla^{m+1} v \|^2_{L^2(\Omega^{\varepsilon}; \, \de)}
&\leq   C    \int_{\Omega^{\varepsilon}\setminus \Om^{c_0} }\frac{1}{\de(x) }
\fint_{B(x, \frac{\delta(x)}{8})} |\nabla^{m} v(y)|^2  dy\, dx +C \int_{\Omega^{c_0}}
\fint_{B(x, \frac{\delta(x)}{8})} |\nabla^{m} v(y)|^2  dy\, dx\nonumber\\
&\leq   C \int^{c_0}_\va \int_{\Sigma_t} \frac{1}{t}
 |\mathcal{M}(\nabla^{m} v) |^2   dS\, dt + C\left\{ \|\dot{g}\|^2_{W\!A^{m,2}(\partial\Omega)}+  \|f\|^2_{H^{-m+1}(\Omega)}\right\}\nonumber\\
& \leq C \ln (1/\va) \left\{  \|\dot{g}\|^2_{W\!A^{m,2}(\partial\Omega)}+  \|f\|^2_{H^{-m+1}(\Omega)}\right\},
\end{align*}
This, together with (\ref{pt315}), gives (\ref{l3.3re3}). The proof is thus completed.
\end{proof}

\begin{lemma}\label{l3.4}
Assume that $\Omega$ is a bounded Lipschitz domain in $\mathbb{R}^d$ and $A$ satisfies
 (\ref{cod1})--(\ref{cod3}). Let $u_\varepsilon, u_0$ be the weak solutions
 to  Dirichlet problems (\ref{eq1}) and  (\ref{hoeq1}), respectively, with $\dot{g}
 \in W\!A^{m,2}(\partial\Omega;\mathbb{R}^n) , f\in H^{-m+1}(\Omega;\mathbb{R}^n)$. Let $w_\va$ be defined as in (\ref{w}).
Then for any $0<\nu < 1/2 $, we have
\begin{align}\label{l3.4re1}
&\| w_\varepsilon\|_{H^m_0(\Omega)}\leq C_\nu\varepsilon^{1/2-\nu} \left\{\|\dot{g}\|_{W\!A^{m,2}(\partial\Omega)}+\|f\|_{H^{-m+1}(\Omega)}\right\},
\end{align} where $C_\nu$ depends only on $ d, n, m,\nu,\mu$ and $\Om.$
If in addition $ A$ is symmetric, i.e. $A=A^*$, then
\begin{align}\label{l3.4re2}
 \| w_\varepsilon\|_{H^m_0(\Omega)} \leq C \varepsilon^{1/2}  \left\{\|\dot{g}\|_{W\!A^{m,2}(\partial\Omega)}+\|f\|_{H^{-m+1}(\Omega)}\right\},
\end{align}
where $C$ depends only on $d, n, m,\mu$ and $\Om.$
\end{lemma}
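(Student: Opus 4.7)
The plan is a standard energy argument, testing the bilinear form of $\mc{L}_\va$ against $w_\va$ itself. Since the corrector cutoff $\rho_\va$ is supported strictly inside $\Om$ while $u_\va-u_0$ already vanishes up to order $m-1$ on $\pa\Om$, we have $w_\va \in H^m_0(\Om)$; the pointwise ellipticity (\ref{cod1}) then yields the coercivity bound
\[
\mu\,\|\na^m w_\va\|_{L^2(\Om)}^2 \;\leq\; \Big|\sum_{|\al|=|\be|=m}\int_\Om D^\al w_{\va i}\,A_{ij}^{\al\be}(x/\va)\,D^\be w_{\va j}\,dx\Big|.
\]
It therefore suffices to bound the bilinear form on the right by $C_\nu\va^{1/2-\nu}\,\|w_\va\|_{H^m_0(\Om)}$ times the data norm, and then cancel one copy of $\|w_\va\|_{H^m_0}$.

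To this end I would apply Lemma \ref{l3.1} with $\phi=w_\va$ and the weight $\var(x)=\de(x)$. This produces four contributions: (i) the pure boundary-layer norm $\|\na^m u_0\|_{L^2(\Om_{4\va})}$; (ii) the boundary-shell terms $\va^k\|S_\va(\na^{m+k}u_0)\|_{L^2(\Om_{5\va}\setminus\Om_{2\va})}$ for $0\le k\le m-1$; (iii) the interior smoothing error $\|\na^m u_0-S_\va(\na^m u_0)\|_{L^2(\Om^{2\va};\de)}$; and (iv) the interior higher-derivative terms $\va^{m-k}\|S_\va(\na^{2m-k}u_0)\|_{L^2(\Om^{2\va};\de)}$. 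Term (i) is bounded directly by (\ref{l3.2re1}). For (ii) with $k\ge 1$ I write $\na^{m+k}u_0=\na^{k-1}\na^{m+1}u_0$, apply (\ref{l2.3re1}) to absorb $k-1$ derivatives of the mollifier for a factor $\va^{-(k-1)}$, and then use the interior bound (\ref{l3.2re1'}); the $\va$-powers combine to $\va^{1/2-\nu}$. For (iii) I use Lemma \ref{l2.4} to gain one factor of $\va$ and pair with the weighted estimate (\ref{l3.2re3}). Item (iv) is handled analogously via (\ref{l2.3re3}) together with (\ref{l3.2re3}).

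The main obstacle is matching weights on the test-function side: the norm $\|\na^m w_\va\|_{L^2(\Om^{2\va};\de^{-1})}$ appearing in (iii) and (iv) is \emph{not} controlled by $\|w_\va\|_{H^m_0}$ via the classical Hardy inequality, because Hardy improves only derivatives of order strictly less than $m$. I would resolve this using the trivial bound $\de^{-1}\leq(2\va)^{-1}$ on $\Om^{2\va}$, which gives $\|\na^m w_\va\|_{L^2(\Om^{2\va};\de^{-1})}\leq C\va^{-1/2}\|w_\va\|_{H^m_0(\Om)}$. This $\va^{-1/2}$ loss is precisely balanced by the $\va^{+1/2}$ gain coming from the weighted estimates on $\na^{m+1}u_0$ in the previous paragraph, so the estimate closes at the target rate $\va^{1/2-\nu}$. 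Collecting all contributions and absorbing one factor of $\|w_\va\|_{H^m_0}$ into the left-hand side produces (\ref{l3.4re1}).

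For the symmetric case (\ref{l3.4re2}) the same scheme applies with Lemma \ref{l3.3} in place of Lemma \ref{l3.2}. The sharp boundary-layer bound (\ref{l3.3re1}) supplies the clean $\va^{1/2}$ in (i) with no $\nu$-loss, while the logarithmic factors appearing in (\ref{l3.3re2})--(\ref{l3.3re3}) enter only through the weighted interior terms and, once paired with the $\va^{-1/2}$ loss from the test-function side, must be absorbed into the multiplicative constant to reach the stated rate $\va^{1/2}$.
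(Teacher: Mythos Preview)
Your argument for (\ref{l3.4re1}) is correct in spirit but takes an unnecessary detour. The paper applies Lemma \ref{l3.1} with the \emph{unweighted} choice $\vartheta\equiv1$, not $\vartheta=\de$. With $\vartheta=1$ the test-function factor is simply $\|\na^m w_\va\|_{L^2(\Om^{2\va})}\le\|w_\va\|_{H^m_0}$, so there is no Hardy-type obstruction at all, and the interior terms collapse to $\va\|\na^{m+1}u_0\|_{L^2(\Om^{2\va})}$, which is handled directly by (\ref{l3.2re1'}) giving $\va\cdot\va^{-1/2-\nu}=\va^{1/2-\nu}$. Your route via $\vartheta=\de$ manufactures the bad factor $\|\na^m w_\va\|_{L^2(\Om^{2\va};\de^{-1})}$ and then rescues it with the crude bound $\de^{-1}\le(2\va)^{-1}$; this works for (\ref{l3.4re1}) because the $\va^{-1/2}$ loss and the $\va^{1-\nu}$ gain from (\ref{l3.2re3}) recombine to the same $\va^{1/2-\nu}$, but it is a self-inflicted complication. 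The weighted version of Lemma \ref{l3.1} is used in the paper only in the duality step of Theorem \ref{thcon}, not here.

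More importantly, your argument for (\ref{l3.4re2}) has a genuine gap. Following your scheme in the symmetric case, the interior contribution becomes
\[
\va\,\|\na^{m+1}u_0\|_{L^2(\Om^{2\va};\de)}\cdot\|\na^m w_\va\|_{L^2(\Om^{2\va};\de^{-1})}\;\le\;C\,\va\cdot[\ln(1/\va)]^{1/2}\cdot\va^{-1/2}\|w_\va\|_{H^m_0}
\]
by (\ref{l3.3re3}) and your $\de^{-1}\le(2\va)^{-1}$ bound. This yields $\va^{1/2}[\ln(1/\va)]^{1/2}$, and the logarithm \emph{cannot} be absorbed into a constant independent of $\va$ as you claim. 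The paper does not reprove (\ref{l3.4re2}) here but cites \cite[Theorem 3.1]{nsx}; the clean $\va^{1/2}$ there requires, in effect, the sharp unweighted bound $\|\na^{m+1}u_0\|_{L^2(\Om^{2\va})}\le C\va^{-1/2}$ (the symmetric analogue of (\ref{l3.2re1'}), obtained from the nontangential maximal estimate), which combines with $\vartheta=1$ to give $\va\cdot\va^{-1/2}=\va^{1/2}$ with no logarithm. Your weighted detour cannot reach the stated rate in the symmetric case.
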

\begin{proof} The estimate (\ref{l3.4re2}) has been proved in \cite[Theorem 3.1]{nsx}, we only need to consider (\ref{l3.4re1}) here.
Using Lemmas  \ref{l2.3} and \ref{l2.4}, we deduce from   (\ref{l3.1re1}) that
\begin{align}\label{pl3401}
 &\Big|\sum_{|\alpha|=|\beta|=m} \int_\Omega D^\alpha \phi_i  A_{ij}^{\alpha\beta}(\frac{x}{\varepsilon})
 D^\beta w_{\varepsilon j}\Big|\nonumber \\
 &\leq\, C\Big\{ \| \nabla^m \phi\|_{L^2(\Omega_{4\varepsilon})} \|\nabla^m u_0\|_{L^2(\Omega_{5\va})}+\va\| \nabla^{m+1}  u_0\|_{L^2(\Omega^{2\varepsilon};\, \vartheta)}\|\nabla^m \phi\|_{L^2(\Omega^{2\va};\, \vartheta^{-1})} \Big\}.
\end{align}
Taking $\var=1, \phi=w_\va$ and using the ellipticity condition (\ref{cod1}), we obtain that
\begin{align*}
 \|\na^m w_{\varepsilon}\|_{H_0^m(\Om)}
\leq\, C\Big\{ \|\nabla^m u_0\|_{L^2(\Omega_{5\va})}+\va\| \nabla^{m+1}  u_0\|_{L^2(\Omega^{2\varepsilon})} \Big\},
\end{align*}
from which and (\ref{l3.2re1}), (\ref{l3.2re1'}), we obtain (\ref{l3.4re1}) immediately.
\end{proof}

We are now prepared to prove Theorem \ref{thcon}.
\begin{proof}[\bf Proof of Theorem \ref{thcon}]  We only provide the details for (\ref{thconre1}), as the proof of (\ref{thconre2}) is similar.
By scaling, we may assume that $$\|\dot{g}\|_{W\!A^{m,2}(\partial\Omega)}+\|f\|_{H^{-m+1}(\Omega)}=1.$$
 For any fixed $F\in H^{-m+1}(\Omega;\mathbb{R}^n)$,
  let $\psi_\varepsilon\in H^m_0(\Omega;\mathbb{R}^n)$  be the weak solution to the Dirichlet problem
\begin{equation*}
 \begin{cases}
 \mathcal{L}^*_\varepsilon \psi_\varepsilon =F  &\text{ in } \Omega,  \\
 Tr (D^\gamma \psi_\varepsilon)=0   & \text{ on } \partial\Omega \ \ \text{ for } 0\leq|\gamma|\leq m-1,
\end{cases}
\end{equation*}
and let $\psi_0\in H^m_0(\Omega;\mathbb{R}^n)$ be the weak solution to the homogenized problem
 \begin{equation*}
 \begin{cases}
 \mathcal{L}_0^* \psi_0 =F  &\text{ in } \Omega,  \\
 Tr (D^\gamma \psi_0)=0  & \text{ on } \partial\Omega \ \  \text{ for  } 0\leq|\gamma|\leq m-1.
\end{cases}
\end{equation*}
Here $\mc{L}^*_\va$ and $\mc{L}^*_0$ are the adjoint operators of $\mc{L}_\va$ and $\mc{L}_0$ respectively.
Let $0\leq\widetilde{\rho}_\varepsilon \leq 1$ be a function in $C_c^\infty(\Omega)$
 with $ supp(\widetilde{\rho}_\varepsilon)\subset \Omega^{6\varepsilon}, \widetilde{\rho}_\varepsilon=1$  on $\Omega^{8\varepsilon} $  and $ |\nabla^m \widetilde{\rho}_\varepsilon|\leq C \varepsilon^{-m}.$ Set $$ \Psi_\varepsilon=\psi_\varepsilon-\psi_0-\varepsilon^m\sum_{|\gamma|=m}\chi^{*\gamma}(\frac{x}{\varepsilon}) S^2_\varepsilon(D^\gamma  \psi_0 )\widetilde{\rho}_\varepsilon .$$
It satisfies the same properties as $w_\va,$ since $A^*$ satisfies the same properties as $A$. Note that $w_\varepsilon\in H^m_0(\Omega;\mathbb{R}^n)$,  we  deduce that
\begin{align}\label{pt100}
\langle F,  w_\varepsilon \rangle_{ H^{-m+1}(\Omega)\times H_0^{m-1}(\Omega) }
 &=   \sum_{|\alpha|=|\beta|=m} \int_{\Omega} A^{\be \al}(\frac{x}{\varepsilon}) D^\alpha w_\varepsilon
 D^\beta  \Psi_\varepsilon
 + \sum_{|\alpha|=|\beta|=m}\int_{\Omega}
 A^{\be \al}(\frac{x}{\varepsilon})D^\al w_\varepsilon D^\beta\psi_0  \nonumber\\
 &\quad + \sum_{|\alpha|=|\beta|=m}\int_{\Omega}
 A^{\be \al}(\frac{x}{\varepsilon}) D^\alpha w_\varepsilon D^\beta
 \Big\{\sum_{|\gamma|=m}\varepsilon^m\chi^{*\gamma}(\frac{x}{\varepsilon}) S^2_\varepsilon(D^\gamma  \psi_0 )\widetilde{\rho}_\varepsilon  \Big\} \nonumber \\
 &\doteq J_1+J_2+J_3.
\end{align}
By (\ref{l3.4re1}),  we obtain that
\begin{align*}
|J_1|\leq C\| w_\varepsilon\|_{H^m_0(\Omega)}\| \Psi_\varepsilon\|_{H^m_0(\Omega)}\leq C_\nu \va^{1-2\nu}
   \|F\|_{H^{-m+1}(\Omega)}.
\end{align*}
Using (\ref{pl3401}) and taking $\var(x)=\de(x)$, we have
\begin{align}\label{j2}
|J_2|
  &\leq C  \varepsilon \|\nabla^m \psi_0 \|_{L^2(\Omega^{2\va};\, \de^{-1})} \|\na^{m+1}u_0\|_{L^2(\Omega^{2\va};\, \de)}+ C\|\na^mu_0\|_{L^{2}(\Omega_{5\va})} \|\nabla^m \psi_0\|_{L^2(\Omega_{4\varepsilon})}   .
\end{align}
By  (\ref{l3.2re1}) (note that $\psi_0$ also satisfies  (\ref{l3.2re1})), we get
\begin{align*}
\|\nabla^{m}u_0 \|_{L^2(\Omega_{5\varepsilon})} \|\nabla^{m}\psi_0 \|_{L^2(\Omega_{4\varepsilon})}\leq  C_\nu\varepsilon^{1-2\nu}\|F\|_{H^{-m+1}(\Omega)}.
\end{align*}
Furthermore, taking (\ref{l3.2re2}) and (\ref{l3.2re3}) into consideration, we  conclude from (\ref{j2}) that
\begin{align*}
|J_2| \leq C_\nu\varepsilon^{1-2\nu}\|F\|_{H^{-m+1}(\Omega)}.
\end{align*}

We now turn to $J_3$. By (\ref{pl3401}), we obtain that
 \begin{align}\label{ptc004}
  |J_3 |\leq& C \varepsilon\sum_{|\ga|=m} \|\na^{m+1}u_0\|_{L^2 (\Omega^{2\va};\, \de)}
\varepsilon^{m}\|\nabla^m\big\{ \chi^{*\ga}(\frac{x}{\varepsilon})
  S^2_\varepsilon(D^\ga \psi_0  )\widetilde{\rho}_\varepsilon\big\}\|_{L^2(\Omega^{2\va};\, \de^{-1})} \nonumber\\
   &+ C\sum_{|\ga|=m} \varepsilon^m\|\na^m u_0\|_{L^{2}(\Omega_{5\varepsilon})}\|\nabla^m \big\{ \chi^{*\ga}(\frac{x}{\varepsilon}) S^2_\varepsilon(D^\ga \psi_0    )\widetilde{\rho}_\varepsilon\big\}\|_{L^2(\Omega_{4\varepsilon}
   )},
\end{align}
where the last term is zero by the definition of $\wt{\rho}_\va.$
To estimate the first term, we note that
\begin{align*}
&\,\varepsilon^m\|D^\be\big\{ \chi^{*\ga}(\frac{x}{\varepsilon})
  S^2_\varepsilon(D^\ga  \psi_0  )\widetilde{\rho}_\varepsilon\big\}\|_{L^2(\Omega^{2\va};\, \de^{-1})}\nonumber\\
&\leq C \|(D^\be \chi^{*\gamma})(\frac{x}{\varepsilon}) S^2_\varepsilon(D^\ga  \psi_0 )\widetilde{\rho}_\varepsilon\|_{L^2(\Omega^{2\va};\, \de^{-1})}
 \nonumber\\
 &\ \ +C \varepsilon^m \|\chi^{*\gamma}(\frac{x}{\varepsilon}) S^2_\varepsilon(D^{\be+\ga}  \psi_0 )\widetilde{\rho}_\varepsilon\|_{L^2(\Omega^{2\va};\, \de^{-1})}
+ C\varepsilon^m \|\chi^{*\gamma}(\frac{x}{\varepsilon}) S^2_\varepsilon(D^\ga  \psi_0 )D^\be\widetilde{\rho}_\varepsilon\|_{L^2(\Omega^{2\va};\, \de^{-1})}\nonumber\\
 &\ \ +C\sum_{\substack{|\zeta+\eta+\xi|=m\\ 1\leq |\zeta|, |\eta|, |\xi|  }}
 \varepsilon^{|\eta|+|\xi|}
 \|(D^\zeta\chi^{*\gamma})(\frac{x}{\varepsilon}) S^2_\varepsilon(D^{\ga+\eta}  \psi_0  )D^\xi\widetilde{\rho}_\varepsilon\|_{L^2(\Omega^{2\va};\, \de^{-1})}\nonumber\\
 &\doteq J_{31}+J_{32}+J_{33} +J_{34},
\end{align*}
for all $|\be|=|\ga|=m$. By Lemmas  \ref{l2.2} and \ref{l3.2}, we obtain that
 \begin{align*}
&J_{31}\leq C  \|  S_\varepsilon(\nabla^m  \psi_0 ) \|_{L^2(\Omega^{6\va};\, \de^{-1})}
\leq C\| \nabla^m {\psi}_0 \|_{L^2(\Omega^{5\va};\, \de^{-1})} \leq C_\nu \va^{-\nu} \|F\|_{H^{-m+1}(\Omega)},\\
&J_{33}\leq C \|  S_\varepsilon(\nabla^m  \psi_0  )\|_{L^2(\Omega_{9\varepsilon}\setminus \Omega_{4\varepsilon};\, \de^{-1})}
\leq C_{\nu} \va^{-\nu} \|F\|_{H^{-m+1}(\Omega)}.
\end{align*}
Furthermore, by Lemmas  \ref{l2.2}, \ref{l2.3} and \ref{l3.2}, we see that
 \begin{align*}
&\quad\quad J_{32}\leq   C  \varepsilon^m \|   S_\varepsilon(\nabla^{2m}  \psi_0 ) \|_{L^2(\Omega^{4\varepsilon};\, \de^{-1})} \leq  C_\nu\va^{-\nu} \|F\|_{H^{-m+1}(\Omega)},\\
 J_{34} &= C\sum_{\substack{k_1+k_2+k_3=m\\ 1\leq k_i ,i=1,2,3}}\varepsilon^{k_2+k_3}
 \| (\nabla^{k_1}\chi^{*})(\frac{x}{\varepsilon})  S^2_\varepsilon(\nabla^{k_2+m}
  \psi_0 ) \nabla^{k_3}\widetilde{\rho}_\varepsilon\|_{L^2(\Omega_{8\varepsilon}\setminus\Omega_{6\varepsilon};\, \de^{-1})}\\
& \leq C\sum_{1\leq k_2\leq m-2} \varepsilon^{k_2 }\|  S_\varepsilon(\nabla^{k_2+m} \psi_0 )\|_{L^2(\Omega_{9\varepsilon}\setminus\Omega_{5\varepsilon};\, \de^{-1})}\\
& \leq C_\nu \va^{-\nu} \|F\|_{H^{-m+1}(\Omega)}.
 \end{align*}
Taking the estimates on $J_{31}, J_{32}, J_{33}, J_{34}$ into (\ref{ptc004}), and using (\ref{l3.2re3}), we obtain that
\begin{align*}
 |J_3|\leq C_\nu \va^{1-2\nu}   \|F\|_{H^{-m+1}(\Omega)}.
 \end{align*}
In view of the estimates on $J_1, J_2, J_3$ and (\ref{pt100}), we have proved that
 \begin{align*}
 \Big| \langle F, w_\varepsilon\rangle_{H^{-m+1}(\Omega)\times H_0^{m-1}(\Omega) } \big|
  \leq C_\nu \va^{1-2\nu}   \|F\|_{H^{-m+1}(\Omega)},
  \end{align*}
  which, combined with the following estimate
\begin{align*}
\|\varepsilon^m\sum_{|\gamma|=m}\chi^\gamma(x/\va) S^2_\varepsilon(D^\gamma u_0 )\rho_\varepsilon\|_{H_0^{m-1}(\Om)}\leq C\va,
\end{align*}
  gives (\ref{thconre1}). The proof is complete.
  \end{proof}
\begin{remark}\label{remark1} 

Part of our motivation for the proof of (\ref{thconre1}) is the finding that $u_0$ satisfying certain weighted estimates such as (\ref{l3.2re1})--(\ref{l3.2re3}),  which give  a proper control on the solution $u_0$ in $\Om^\va$ and $\Om_\va$. This also inspires us to modify the duality method with weight $\de(x)$.
  We mention that weight  functions have been used previously in \cite{klsal2, xusiam2016} to derive the suboptimal $O(\va\ln(1/\va))$ convergence rate for second order elliptic systems with symmetric coefficients. Our consideration on the suboptimal convergence rate is also in debt to these works.
\end{remark}

\section{Uniform $C^{m-1, \lm}$ estimates}

In this section, we consider uniform boundary $C^{m-1,\lm}, 0<\lm<1,$ estimates of $u_\va$ in $C^1$ domains. Throughout the section, we always assume that $A$ satisfies (\ref{cod1}) and (\ref{cod3}). Recall that locally the boundary of a $C^1$ domain is the graph of a $C^1$ function, we thus restrict our considerations to equations on $(D_r, \De_r)$ defined in (\ref{drder}) with the defining function satisfying (\ref{c1c}).
Let
$$\mf{P}_k=\Big\{(P^1_k, P^2_k,..., P^n_k)\mid P^i_k, 1\leq i\leq n, \text{ are polynomials with real coefficients of degree } k  \Big\}.
$$
Let $u_\va\in H^m(D_{2r}; \mb{R}^n )$ be a weak solution to
\begin{align}\label{uva1}
 \mc{L}_\va u_\va= F  \quad\text{in } D_{2r}, \quad\quad
 Tr (D^\ga u_\va)=D^\ga G  \quad\text{on }  \De_{2r} \quad\text{for  } 0\leq|\ga|\leq m-1,
  \end{align}
 where $G\in C^{m-1,1}(D_{2r};\R^n), F\in L^p(D_{2r}; \R^n)$ with $ p> \max\{1,  2d/(d+2m-2)\}$.  Define
\begin{align}\label{l312}
 \Phi_\lm(r,u_\va)=
\frac{1}{r^{m-1+\lm}}\inf_{P_{m-1}\in \mf{P}_{m-1}} \bigg\{&\Big(\fint_{D_{r}}|u_\va-P_{m-1}|^2 \Big)^{1/2}+ r^{2m} \Big(\fint_{D_{r}} |F|^{p}\Big)^{1/p} \nonumber\\
 &  + \sum_{j=0}^{m}  r^j  \|\na^j (G-P_{m-1})\|_{L^\infty(D_r)}
  \bigg\},\quad \quad  0<\lm<1 .
\end{align}
\begin{lemma}\label{lemm3.1}
Let $0<\va\leq r\leq1$ and $ \Phi_\lm(r,u_\va)$ be defined as above.
There exists $u_0\in H^m(D_r; \mb{R}^n )$ such that $\mc{L}_0 u_0=F$ in $D_r$, $Tr (D^\ga u_0)=D^\ga G $ on $\De_r$ for $ 0\leq|\ga|\leq m-1$, and
\begin{align}\label{l311}
\Big(\fint_{D_r}|u_\va-u_0|^2 \Big)^{1/2}\leq Cr^{m-1+\lm} \Big(\frac{\va}{r}\Big)^{1/4} \Phi_\lm(2r,u_\va),
\end{align}
where $C$  depends only on $d, n,m, p, \mu$ and $M$ in (\ref{c1c}).

\end{lemma}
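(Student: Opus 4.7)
The plan is to construct $u_0$ as the Dirichlet solution of the homogenized system in a slightly larger subdomain $D_\rho$ with $r\le\rho\le 3r/2$, chosen by a Fubini/co-area averaging so that $Tr(u_\va)\in W\!A^{m,2}(\pa D_\rho)$, imposing on all of $\pa D_\rho$ the same trace as $u_\va$, and then to apply the convergence rate of Theorem \ref{thcon} to $u_\va-u_0$ after rescaling $D_\rho$ to unit size.

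First I would normalize. Since $\mc L_\va P=0$ for every $P\in\mf P_{m-1}$ (it has degree $<m$), the substitution $u_\va\mapsto u_\va-P_{m-1}$, $G\mapsto G-P_{m-1}$ preserves the PDE; choosing $P_{m-1}$ as a near-minimizer in $\Phi_\lm(2r,u_\va)$, we may henceforth assume the bracket in \eqref{l312} evaluated at $P_{m-1}=0$ is $\lesssim (2r)^{m-1+\lm}\Phi_\lm(2r,u_\va)$. Iterated application of Lemma \ref{l2.6} (with Remark \ref{remark0} to accommodate $F$ merely in $L^p$, using $p>2d/(d+2m-2)>2d/(d+2m)$) then gives
\[
\sum_{j=0}^{m} r^{j}\Big(\fint_{D_{3r/2}}|\na^j u_\va|^2\Big)^{1/2}\ \le\ C\, r^{m-1+\lm}\Phi_\lm(2r,u_\va).
\]
By a Fubini argument applied to boundary traces of $D^\ga u_\va$ ($|\ga|\le m-1$) and tangential $D^{m-1}$-derivatives along the level sets $\pa D_\rho$, there exists $\rho\in(5r/4,3r/2)$ at which $Tr(u_\va)\in W\!A^{m,2}(\pa D_\rho)$ with norm controlled by the quantity above. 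For this $\rho$, define $u_0\in H^m(D_\rho;\R^n)$ as the unique weak solution of $\mc L_0 u_0=F$ in $D_\rho$ with $Tr(D^\ga u_0)=Tr(D^\ga u_\va)$ on $\pa D_\rho$ for $0\le|\ga|\le m-1$. Then $u_\va-u_0\in H^m_0(D_\rho)$, and in particular $Tr(D^\ga u_0)=D^\ga G$ on $\De_r\subset\De_\rho$, so restricting $u_0$ to $D_r$ is admissible for the statement.

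Rescaling $y=x/\rho$ sends $\mc L_\va$ on $D_\rho$ to $\mc L_{\va/\rho}$ on $D_1$; applying Theorem \ref{thcon} with $\nu=3/4$ to $u_\va-u_0$ gives rate $(\va/\rho)^{1-\nu}=(\va/\rho)^{1/4}$, and unwinding the scaling together with the $W\!A^{m,2}$-trace bound just obtained and $\|F\|_{H^{-m+1}(D_\rho)}\lesssim \|F\|_{L^p(D_{2r})}$ (the condition on $p$ is precisely what makes this Sobolev embedding hold) yields
\[
\Big(\fint_{D_\rho}|u_\va-u_0|^2\Big)^{1/2}\ \le\ C\,\rho^{m-1+\lm}(\va/\rho)^{1/4}\Phi_\lm(2r,u_\va).
\]
Since $D_r\subset D_\rho$ and $\rho\asymp r$ with comparable volumes, this gives the desired bound on $D_r$.

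The main obstacle is bounding $\|Tr(u_\va)\|_{W\!A^{m,2}(\pa D_\rho)}$: this quantity involves tangential derivatives of $D^{m-1}u_\va$ on $\pa D_\rho$, effectively $m$-th boundary derivatives, whereas $u_\va$ only has $H^m$-interior regularity because $A$ is merely bounded measurable. The Fubini/co-area averaging circumvents this by selecting a generic $\rho\in(5r/4,3r/2)$ for which the trace is well-defined and controlled by the interior $H^m$-norm of $u_\va$ on $D_{3r/2}$; combined with the Caccioppoli estimate above, this produces the desired bound in terms of $\Phi_\lm(2r,u_\va)$. The remaining work is a careful bookkeeping of scaling exponents to ensure the final prefactor matches $r^{m-1+\lm}(\va/r)^{1/4}$.
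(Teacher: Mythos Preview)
Your proposal is correct and follows essentially the same approach as the paper: Caccioppoli to control $u_\va$ in $H^m(D_{3r/2})$, a co-area/Fubini selection of an intermediate radius $\rho\in(5r/4,3r/2)$ on which the $W\!A^{m,2}$-trace of $u_\va$ is controlled, definition of $u_0$ as the $\mc L_0$-solution on $D_\rho$ sharing the full boundary trace of $u_\va$, application of Theorem \ref{thcon} (with $\nu=3/4$) after rescaling to unit size, and finally subtraction of $P_{m-1}$ to pass to $\Phi_\lm$. The only cosmetic difference is that the paper normalizes $r=1$ first and subtracts $P_{m-1}$ at the end, whereas you subtract $P_{m-1}$ first and carry the scale $r$ throughout; these are equivalent.
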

\begin{proof} Let us first assume that $r=1$.
By Caccioppoli's inequality (see Remark \ref{remark0}), we have
\begin{align}\label{pl311}
  \|u_\va\|_{H^m( D_{3/2})}  \leq C   \Big\{\|u_\va\|_{L^2(D_2)}+ \|F\|_{L^{p}(D_2)}+  \sum_{j=0}^{m}  \|\na^j G\|_{L^2(D_2)} \Big\},
\end{align} for $p>\max\{1,  2d/(d+2m)\}$.
By the co-area formula, there exist $t\in [5/4, 3/2]$ such that
\begin{align}\label{pl312}
\|u_\va\|_{H^m(\pa D_t\setminus\De_2) } \leq   C   \Big\{\|u_\va\|_{L^2(D_2)}+ \|F\|_{L^{p}(D_2)} +\sum_{j=0}^{m}  \|\na^j G\|_{L^2(D_2)} \Big\},
\end{align}
where $C$ depends only on $d,n, m,\mu$.
Now let $u_0$ be the weak solution to $$\mc{L}_0u_0=F ~\text{ in } D_t, ~~~~~~ Tr (D^\ga u_0)=  Tr(D^\ga u_\va)  \text{ on } \pa D_t.$$
Note that $F\in L^p\hookrightarrow H^{-m+1}$ when $p>\max\{ 2d/(d+2m-2), 1\}$.   As a consequence of (\ref{thconre1}) in Theorem \ref{thcon}, we have
\begin{align*}
\|u_\va-u_0\|_{L^2(D_t)} &\leq C \va^{1/4} \Big\{  \|u_\va\|_{H^m(\pa D_t)}+  \|F\|_{L^{p}(D_2)}  \Big\},
\end{align*}
where $C $ depends only on $d,n,m,\mu$ and $M$ in (\ref{c1c}).
This, together with (\ref{pl312}), yields  \begin{align}\label{pl313}
 \|u_\va-u_0\|_{L^2(D_1)}
 \leq \|u_\va-u_0\|_{L^2(D_t)}
 \leq C  \va^{1/4}
  \Big\{\|u_\va\|_{L^2(D_2)}+ \|F\|_{L^{p}(D_2)}+   \| G\|_{C^{m-1,1}(D_2)} \Big\},
\end{align} for $p>\max\{1, 2d/(d+2m-2) \}.$

We now perform scaling for general $0<r<1$. Set  $$v_\va (x)=u_\va(r x),\quad \wt{G}(x)=G(rx), \quad \wt{F}(x) =r^{2m}F(rx).$$
By (\ref{uva1}),  we know that
$
\mc{L}_\frac{\va }{r} v_\va =\wt{F}(x)$  in $\widetilde{D}_2,$  and $
 Tr (D^\ga  v_\va)=D^\ga\wt{G}(x)$ on $\widetilde{\De}_2$  for $0\leq|\ga|\leq m-1,$ where
 \begin{align*}
 \widetilde{D}_2=\big\{ (x',x_d) \in \R^d: |x'|  < 2 \text{ and } \psi_r(x')< x_d<\psi_r(x')+2 \big\},\\
  \widetilde{\De}_2 =\big\{ (x',\psi_r(x')) \in \R^d: |x'|  < 2  \big\}, \text{ with }   \psi_r(x')=r^{-1}\psi(rx').
 \end{align*}

 Thanks to (\ref{pl313}), there exists $v_0$  with
$\mc{L}_0 v_0=\wt{F}(x)$ in $\widetilde{D}_1$, $Tr (D^\ga v_0)=Tr (D^\ga  v_\va)$  on $\pa \widetilde{D}_1$ for  $ 0\leq|\ga|\leq m-1,$
such that \begin{align*}
 \|v_\va-v_0\|_{L^2(\widetilde{D}_1)}
 &\leq C \Big(\frac{\va}{r}\Big)^{1/4} \Big\{\|v_\va\|_{L^2(\widetilde{D}_2)}+   \|\wt{F}(x)\|_{L^{p}(\widetilde{D}_2)} +\| \wt{G}\|_{C^{m-1,1}(\widetilde{D}_2)} \Big\}.
\end{align*}
Setting $u_0(x)=v_0(x/r)$,  we  then obtain by the change of variables,
 \begin{align*}
\Big(\fint_{D_r}|u_\va-u_0|^2 \Big)^{1/2}
 \leq C  \Big(\frac{\va}{r}\Big)^{1/4} \bigg\{&\Big(\fint_{D_{2r}}|u_\va |^2 \Big)^{1/2} + r^{2m} \Big(\fint_{D_{2r}} |F|^{p}\Big)^{1/p}
 + \sum_{j=0}^{m}  r^j \|\na^j G\|_{L^\infty(D_{2r})}
  \bigg\}.
 \end{align*}
Note that the above inequality still hold if we subtract a polynomial $P_{m-1}\in \mf{P}_{m-1}$ from $u_\va, u_0$ and $G$  simultaneously. This gives (\ref{l311}) by taking the infimum with respect to $P_{m-1}$.
\end{proof}

\begin{lemma}\label{lemm3.2}
For  $0<\va\leq r\leq1$, let  $u_0\in H^m(D_2; \mb{R}^n)$ be a weak solution to
   \begin{align*}
 \mc{L}_0 u_0=F    \quad\text{in } D_2,   \quad\quad
 Tr (D^\ga u_0)=D^\ga G   \quad\text{on }  \De_2  \quad\text{for  } 0\leq|\ga|\leq m-1,
 \end{align*}
  where $G\in C^{m-1,1}( D _2; \R^n), F\in L^p(D_2; \R^n)$  with $p>\max\{ d/(m+1),  2d/(d+2m-2), 1\}$. Then for any $ 0<\lm < \min\{m+1-d/p, 1\}$, there exist $  \lm_0<\min\{m+1-d/p, 1\}$ and a constant $C$  depending only on $d,n,m, p, \mu, M$ and $\tau(t)$ in (\ref{c1c}), such that
\begin{align}\label{l321}
\Phi_\lm(\de r;u_0)\leq C \de^{\lm_0-\lm } \Phi_\lm(r;u_0) \quad\text{for }\,  0<\de<1/4.
\end{align}

\end{lemma}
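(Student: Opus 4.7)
The plan is a Campanato-type decay argument for the constant-coefficient system $\mc{L}_0$, driven by a boundary $C^{m-1,\lm_0}$ Schauder estimate in the $C^1$ domain $D_2$. First, by the rescaling $v(y):=u_0(ry)$ with rescaled defining function $\psi_r(y')=r^{-1}\psi(ry')$ (which still satisfies $|\na\psi_r|\le M$ and has modulus no worse than $\tau(t)$ since $r\le 1$), the quantity $\Phi_\lm$ obeys $\Phi_\lm(\de r;u_0)=\Phi_\lm(\de;v)$ and $\Phi_\lm(r;u_0)=\Phi_\lm(1;v)$, so it suffices to establish the estimate at $r=1$. Second, letting $P^*\in\mf{P}_{m-1}$ attain the infimum in $\Phi_\lm(1;v)$ up to a factor of two and replacing $(v,\wt G)$ by $(v-P^*,\wt G-P^*)$---which preserves the equation $\mc{L}_0(\cdot)=\wt F$ because $\deg P^*<2m$---we may assume the competitor $P_{m-1}=0$ is nearly optimal; in particular $\Phi_\lm(1;v)$ now controls $\|v\|_{L^2(D_1)}$, $\|\wt F\|_{L^p(D_1)}$, and $\|\wt G\|_{C^{m-1,1}(D_1)}$.

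The key input is a boundary Schauder estimate: for any $\lm_0\in(\lm,\min\{m+1-d/p,1\})$,
\[
 [v]_{C^{m-1,\lm_0}(D_{1/2})}\le C\Big\{\|v\|_{L^2(D_1)}+\|\wt F\|_{L^p(D_1)}+\|\wt G\|_{C^{m-1,1}(D_1)}\Big\}\le C\,\Phi_\lm(1;v),
\]
with $C$ depending on $d,n,m,p,\mu,M,\tau$; the upper bound $\lm_0<m+1-d/p$ comes from the Morrey-type embedding forced by $\wt F\in L^p$, and $\lm_0<1$ reflects the bare $C^1$ smoothness of the boundary. Granted this, let $P$ denote the Taylor polynomial of $v$ of degree $m-1$ at the origin; the Dirichlet condition $D^\ga v(0)=D^\ga\wt G(0)$ for $|\ga|\le m-1$ forces $P$ to coincide with the Taylor polynomial of $\wt G$ of degree $m-1$ at the origin. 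Using $P$ as the competitor in the infimum defining $\Phi_\lm(\de;v)$, the $C^{m-1,\lm_0}$ bound yields $\|v-P\|_{L^\infty(D_\de)}\le C\de^{m-1+\lm_0}\Phi_\lm(1;v)$; Taylor's theorem applied to $\wt G\in C^{m-1,1}$ gives $\|\na^j(\wt G-P)\|_{L^\infty(D_\de)}\le C\de^{m-j}\|\na^m\wt G\|_{L^\infty(D_1)}$ for $0\le j\le m-1$, while the $j=m$ contribution is simply $\de^m\|\na^m\wt G\|_{L^\infty(D_\de)}$; the forcing contributes $\de^{2m}(\fint_{D_\de}|\wt F|^p)^{1/p}\le C\de^{2m-d/p}\Phi_\lm(1;v)$. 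Since $\lm_0<\min\{m+1-d/p,1\}$, each of the exponents $m-1+\lm_0$, $m$, $2m-d/p$ is at least $m-1+\lm_0$, so every term is $\le C\de^{m-1+\lm_0}\Phi_\lm(1;v)$. Dividing by $\de^{m-1+\lm}$ and undoing the rescaling yields $\Phi_\lm(\de r;u_0)\le C\de^{\lm_0-\lm}\Phi_\lm(r;u_0)$.

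The main obstacle is the boundary $C^{m-1,\lm_0}$ Schauder estimate used above: the second-order analogue in $C^1$ domains with $L^p$ data is classical, but for a $2m$-order constant-coefficient system with the minimal regularity $\wt G\in C^{m-1,1}$ and $\wt F\in L^p$ with $p>d/(m+1)$, one needs an adapted version. The natural route is a contradiction/blow-up argument: if the bound fails along a sequence $(v_k,\wt F_k,\wt G_k,\psi_k)$, flattening the boundary and passing to a limit produces an $\mc{L}_0$-harmonic function on the half-space $\{y_d>0\}$ with vanishing Dirichlet trace of order $m-1$, and a Liouville-type rigidity (the limit must be a polynomial of degree $\le m-1$) contradicts the normalization. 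Once this ingredient is in place, the Campanato iteration sketched above is routine.
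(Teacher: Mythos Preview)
Your argument is correct and follows essentially the same route as the paper: rescale to $r=1$, use the degree-$(m-1)$ Taylor polynomial of $u_0$ (equivalently of $G$) at the origin as the competitor in $\Phi_\lm(\de;\cdot)$, invoke a boundary $C^{m-1,\lm_0}$ estimate for the constant-coefficient operator $\mc{L}_0$ in $C^1$ domains, and then subtract an arbitrary $P_{m-1}$ and take the infimum. Your term-by-term accounting of the exponents $m-1+\lm_0$, $m$, and $2m-d/p$ is exactly what the paper does in its inequality (\ref{pl321}).

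The only point worth flagging is your ``main obstacle.'' You propose a compactness/blow-up argument to manufacture the Schauder estimate; the paper instead dispatches it by citing existing results. The trick is the embedding $L^p(D_2)\hookrightarrow W^{-m,q}(D_2)$ for some $q>d$ whenever $p>\max\{d/(m+1),1\}$, which puts the forcing $F$ into the framework of the $W^{m,q}$/$C^{m-1,\lm_0}$ theory for higher-order systems with constant (or $V\!M\!O$) coefficients in $C^1$ domains due to Dong--Kim and Byun--Ryu (references \cite{dk,br} in the paper), combined with a localization argument. So no blow-up is needed; the ingredient is already in the literature once you observe the embedding.
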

\begin{proof}
By rescaling, we assume that $r=1$. For $ 0< \lm < \min\{m+1-d/p, 1\}$, fix $\lm_0$ such that $ \lm<\lm_0<\min\{m+1-d/p,   1\}.$ Set $P_{m-1}$ in (\ref{l312}) as  $$P_{m-1}(x)= \sum_{|\al|=0}^{m-1} \frac{1}{\al!} D^\al u_0(0) x^{\al}=\sum_{|\al|=0}^{m-1} \frac{1}{\al!} D^\al G(0) x^{\al}.$$  It is not difficult to find that
\begin{align}\label{pl321}
 \Phi_\lm(\de,u_0)\leq  C \de^{\lm_0-\lm} \|u_0\|_{C^{m-1,\lm_0}(D_\de)}
 &+  \de^{m+1-\lm-d/p} \Big( \fint_{D_{1}} |F|^{p}\Big)^{1/p}
 +  \de^{1-\lm}   \sum_{j=0}^m \|\na^jG\|_{L^\infty(D_1)}
\end{align} for  any $0<\de<1/4$.
Observe that when $p>\max\{d/(m+1),1\}$, $L^p(D_2;\R^n)\hookrightarrow W^{-m,q}(D_2;\R^n)$ for some $q>d.$ Combing  the $C^{m-1,\lm_0}$ estimate for higher order  elliptic systems with constant coefficients in $C^1$ domains (see e.g., \cite{dk, br}) and a localization argument (see e.g., the proof of Corollary \ref{co4.1}),
we have
\begin{align}\label{pl322}
 \|u_0\|_{C^{m-1,\lm_0}(D_\de)}&\leq C\|u_0\|_{C^{m-1,\lm_0}(D_{1/4})}\nonumber\\
 &\leq C\bigg\{ \Big(\fint_{D_1} |u_0|^2\Big)^{1/2}  +\Big(\fint_{D_1} |F|^p\Big)^{1/p}
 +  \sum_{j=0}^m \|\na^jG\|_{L^\infty(D_1)} \bigg\}.
\end{align}
Taking (\ref{pl322}) into (\ref{pl321}),  we  derive that
\begin{align*}
\Phi_\lm(\de,u_0)\leq  C \de^{\lm_0-\lm} \bigg\{ \Big(\fint_{D_1} |u_0|^2\Big)^{1/2}  +\Big(\fint_{D_1} |F|^p\Big)^{1/p}   +\sum_{j=0}^m \|\na^jG\|_{L^\infty(D_1)}  \bigg\}.
\end{align*}
 Substituting $u_0, G$ by $u_0-P_{m-1}$ and $ G-P_{m-1}$ respectively and taking the infimum with respect to $P_{m-1}\in\mf{P}_{m-1}$, we obtain (\ref{l321}) immediately.
\end{proof}

\begin{lemma}\label{lemm3.3}
For $0<\va\leq r\leq1/2$, let $\Phi_\lm(r,u_\va)$ be defined as in (\ref{l312}). Then there exists $\de\in (0, 1/4)$ depending only on $d,n,m, p,\lm,\mu, M$ and $\tau(t)$ in (\ref{c1c}), such that
\begin{align}\label{l331}
\Phi_\lm(\de r;u_\va)\leq \frac{1}{2}\Phi_\lm(2r;u_\va)+C\Big(\frac{\va}{r}\Big)^{1/4}\Phi_\lm(2r;u_\va),
\end{align}
where $C$ depends only on $d, n, m, p,  \lm, \mu, M$ and $\tau(t)$ in (\ref{c1c}).
\end{lemma}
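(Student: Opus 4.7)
The plan is to combine Lemma \ref{lemm3.1} (which approximates $u_\va$ by a solution $u_0$ of the homogenized problem on the same domain, with quantitative error) and Lemma \ref{lemm3.2} (which gives self-improvement of $\Phi_\lm$ for $u_0$ using regularity of the constant-coefficient operator). The key conceptual point is that $\Phi_\lm(t;\,\cdot\,)$ depends on the function only through the $L^2$ term $(\fint_{D_t}|\cdot-P_{m-1}|^2)^{1/2}$, since $F$ and $G$ are the same for $u_\va$ and $u_0$. Hence by the triangle inequality applied inside the infimum over $P_{m-1}\in\mf{P}_{m-1}$, one has for every scale $t$
\begin{align*}
\Phi_\lm(t;u_\va)\,\leq\,\Phi_\lm(t;u_0)+\frac{C}{t^{m-1+\lm}}\Big(\fint_{D_t}|u_\va-u_0|^2\Big)^{1/2},
\end{align*}
and symmetrically with $u_\va$ and $u_0$ interchanged. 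This is the workhorse identity.

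First, I would apply Lemma \ref{lemm3.1} at scale $r$ (note $\va\le r\le 1/2<1$) to produce $u_0\in H^m(D_r;\R^n)$ with $\mc{L}_0 u_0=F$ in $D_r$, $Tr(D^\ga u_0)=D^\ga G$ on $\De_r$ and
\begin{align*}
\Big(\fint_{D_r}|u_\va-u_0|^2\Big)^{1/2}\leq Cr^{m-1+\lm}(\va/r)^{1/4}\Phi_\lm(2r;u_\va).
\end{align*}
Using the trivial inclusion $D_{\de r}\subset D_r$ and $|D_r|/|D_{\de r}|\le C\de^{-d}$, this bound transfers to $D_{\de r}$ with an extra factor $\de^{-d/2}$. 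Second, I apply Lemma \ref{lemm3.2} to $u_0$ at scale $r/2$ (so that $u_0$ lives on $D_r=D_{2\cdot r/2}$, matching the hypothesis of Lemma \ref{lemm3.2} after rescaling); for $0<\de<1/8$ this yields
\begin{align*}
\Phi_\lm(\de r;u_0)=\Phi_\lm\bigl(2\de\cdot(r/2);u_0\bigr)\leq C(2\de)^{\lm_0-\lm}\Phi_\lm(r/2;u_0),
\end{align*}
with $\lm<\lm_0<\min\{m+1-d/p,1\}$.

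Third, I pass $\Phi_\lm(r/2;u_0)$ back to $\Phi_\lm(r/2;u_\va)$ via the workhorse identity and the Lemma \ref{lemm3.1} estimate, and then note the elementary monotonicity $\Phi_\lm(r/2;u_\va)\le C\Phi_\lm(2r;u_\va)$ obtained by plugging the minimizing polynomial of $\Phi_\lm(2r;u_\va)$ as a test polynomial in $\Phi_\lm(r/2;u_\va)$ (measures, $F$ and $G$ norms only lose a constant depending on $d,n,m,\lm$). Combining these steps gives
\begin{align*}
\Phi_\lm(\de r;u_\va)\leq C\de^{\lm_0-\lm}\Phi_\lm(2r;u_\va)+C_\de(\va/r)^{1/4}\Phi_\lm(2r;u_\va),
\end{align*}
where the first constant is independent of $\de$ and the second blows up as $\de\to 0^+$ (coming from the $\de^{-d/2-m+1-\lm}$ factor in transferring the $L^2$ error to $D_{\de r}$).

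Finally, because $\lm_0-\lm>0$ is strictly positive, I choose $\de\in(0,1/8)$ small enough, depending only on $d,n,m,p,\lm,\mu,M$ and $\tau(t)$, so that $C\de^{\lm_0-\lm}\le 1/2$. Freezing this $\de$ turns $C_\de$ into an allowed constant $C$, and (\ref{l331}) follows. The only delicate point is the bookkeeping of polynomial choices: one must make sure that subtracting the same polynomial from $u_\va$, $u_0$ and $G$ simultaneously leaves the problems and the definition of $\Phi_\lm$ invariant, so that comparing $u_\va$ and $u_0$ via the triangle inequality inside the infimum is legitimate. All other estimates are then routine applications of Lemmas \ref{lemm3.1} and \ref{lemm3.2} together with the scaling built into the definition of $\Phi_\lm$.
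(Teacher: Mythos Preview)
Your proposal is correct and follows essentially the same approach as the paper's proof: use the triangle inequality inside the infimum to pass between $\Phi_\lm(\cdot;u_\va)$ and $\Phi_\lm(\cdot;u_0)$, apply Lemma~\ref{lemm3.2} to $u_0$, transfer the $L^2$ error from Lemma~\ref{lemm3.1} down to scale $\de r$ at the cost of a $\de$-dependent constant, and then choose $\de$ small so that $C\de^{\lm_0-\lm}\le 1/2$. The only cosmetic difference is that the paper applies Lemma~\ref{lemm3.2} directly at scale $r$ (passing from $\Phi_\lm(r;u_0)$ to $\Phi_\lm(r;u_\va)$), whereas you apply it at scale $r/2$ to match the $D_2$ hypothesis literally; this forces $\de<1/8$ rather than $\de<1/4$, which is harmless.
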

\begin{proof}
By the definition, it is easy to find that
\begin{align*}
\Phi_\lm(\de r;u_\va)&\leq \Phi_\lm(\de r;u_0)+\frac{1}{(\de r)^{m-1+\lm}}\Big(\fint_{D_{\de r}}|u_\va-u_0|^2 \Big)^{1/2}\nonumber\\
& \leq C \de^{\lm_0-\lm}\Phi_\lm(r;u_0)+\frac{1}{(\de r)^{m-1+\lm}}\Big(\fint_{D_{\de r}}|u_\va-u_0|^2 \Big)^{1/2}\nonumber\\
& \leq C \de^{\lm_0-\lm}\Phi_\lm(r;u_\va) +\frac{C\de^{\lm_0-\lm}}{r^{m-1+\lm}}\Big(\fint_{D_{r}}|u_\va-u_0|^2 \Big)^{1/2}  +\frac{1}{(\de r)^{m-1+\lm}}\Big(\fint_{D_{\de r}}|u_\va-u_0|^2 \Big)^{1/2}\nonumber\\
&\leq C \de^{\lm_0-\lm}\Phi_\lm(2r;u_\va) +\frac{C_\de}{r^{m-1+\lm}}\Big(\fint_{D_{r}}|u_\va-u_0|^2 \Big)^{1/2}.
\end{align*}
Taking $\de$ small such that $ C \de^{\lm_0-\lm}<1/2$, and then using Lemma \ref{lemm3.1},  we obtain (\ref{l331}) directly.
 \end{proof}

\begin{proof}[\bf Proof of Theorem \ref{thholder}]
We only need to consider the case $\va\leq r<1/4$, since the estimate (\ref{tholdere1}) is trivial when $1/4\leq r\leq 1$, following directly from  Caccioppoli's  inequality.
Thanks to Lemma \ref{lemm3.3}, we can take $N_0$ large enough such that
\begin{align}\label{t311}
\Phi_\lm(\de r;u_\va)\leq \frac{1}{2}\Phi_\lm(2r;u_\va)+C\Big(\frac{1}{N_0}\Big)^{1/4}\Phi_\lm(2r;u_\va)\leq\Phi_\lm(2r;u_\va),
\end{align} for $ r \geq N_0\va  ,$
where $\de$ given by  Lemma \ref{lemm3.3} is fixed.
Hence, by iteration we have \begin{align}\label{t311'}\Phi_\lm(r;u_\va)\leq C \Phi_\lm(1;u_\va)  \quad\text{ for } r\in[N_0\va, 1/2).\end{align}  On the other hand, for $ \va\leq r<N_0\va,$
it is obvious that  $$\Phi_\lm(r;u_\va)\leq C \Phi_\lm(N_0\va;u_\va)\leq C \Phi_\lm(1;u_\va),$$ where $C$ depends on $N_0$. This, together with (\ref{t311'}), gives
\begin{align}\label{pt312}
 \Phi_\lm(r;u_\va)\leq C\Phi_\lm(1;u_\va)  \quad\text{ for } \va\leq r\leq 1/2.
\end{align}
By Caccioppoli's inequality, we deduce that
\begin{align*}
\Big( \fint_{D_r}|\na^m (u_\va-P_{m-1})|^2\Big)^{1/2}
& \leq Cr^{-m}\inf_{P_{m-1}\in \mf{P}_{m-1}}\bigg\{\Big(\fint_{D_{2r}}|u_\va-P_{m-1} |^2 \Big)^{1/2} \nonumber\\
&\quad+ r^{2m} \Big(\fint_{D_{2r}} |F|^{p}\Big)^{1/p}
  + \sum_{j=0}^{m} r^j \Big( \fint_{D_{2r}} |\na^j (G-P_{m-1})|^{2}\Big)^{1/2}
  \bigg\}.\\
&=Cr^{\lm-1} \Phi_\lm(2r, u_\va)
\leq  C r^{\lm-1}  \Phi_\lm(1, u_\va)\\
 &\leq C r^{\lm-1}\Big\{ \Big(\fint_{D_1}|u_\va|^2 \Big)^{1/2}+\Big(\fint_{D_1} |F|^p\Big)^{1/p}   +\sum_{j=0}^{m}  \|\na^jG\|_{L^\infty(D_1)} \Big\},
\end{align*}
for all $\va\leq r<1/2$ and any $P_{m-1}\in \mf{P}_{m-1},$ which is exactly (\ref{tholdere1}).
\end{proof}

\begin{corollary} \label{co3.1}
 In addition to the assumptions of Theorem \ref{thholder}, if $A\in V\!M\!O (\mathbb{R}^d),$ then for any $0<\lm<\min\{m+1-d/p,   1\}$,
\begin{align}\label{co311}
\|u_\va\|_{C^{m-1,\lm}(D_{1/4})}\leq C \Big\{\Big(\fint_{D_1}|u_\va|^2 \Big)^{1/2}& +\Big(\fint_{D_1} |F|^p\Big)^{1/p}   + \|G\|_{C^{m-1,1}(D_1)}\Big\},
\end{align}
where $C$ depends only on $d,n,m, p,  \mu$ as well as $ M, \tau(t)$ in (\ref{c1c}) and $\varrho(t)$ in (\ref{vmo}).
\end{corollary}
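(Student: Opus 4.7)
The plan is a standard blow-up (rescaling) argument that glues the large-scale estimate~(\ref{tholdere1}) of Theorem~\ref{thholder} to a classical small-scale $C^{m-1,\lm}$ bound for higher-order elliptic systems with VMO coefficients in $C^1$ domains. By the Campanato--Morrey characterization of $C^{m-1,\lm}$, it suffices to show that for every $x_0 \in \overline{D_{1/8}}$ and every $0 < r < 1/16$,
\begin{align*}
\Big(\fint_{D(x_0, r)} |\nabla^m u_\va|^2\Big)^{1/2} \leq C\, r^{\lm - 1}\, \mathcal{I},
\end{align*}
where $D(x_0,r)$ denotes the translated surface domain at $x_0$ (intersected with $D_1$) and $\mathcal{I}$ denotes the right-hand side of~(\ref{co311}). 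A standard covering argument then upgrades this to the $C^{m-1,\lm}$ norm bound on $D_{1/4}$.

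First I would dispose of the large-scale regime $r \geq \va$. After recentering the defining function $\psi$ at $x_0$ and rescaling $D(x_0,r)$ to unit size, Theorem~\ref{thholder} applies directly (the hypothesis $p > \max\{d/(m+1), 2d/(d+2m-2), 1\}$ and the $C^1$-character of the boundary with modulus $\tau$ are preserved under translation), and yields the stated bound for every $\va \leq r < 1/16$, with constant depending only on the stated parameters.

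The small-scale regime $0 < r < \va$ is handled by blow-up. Set
\begin{align*}
v(y) = \va^{-m+1-\lm} \mathcal{I}^{-1}\, u_\va(x_0 + \va y), \qquad y \in \wt D_1,
\end{align*}
where $\wt D_1$ is the unit surface domain associated with the rescaled defining function $\wt\psi(y') = \va^{-1}\psi(x_0' + \va y') - \va^{-1}\psi_d(x_0)$. Then $\wt\psi \in C^1$ with the same modulus $\tau$ (in fact with better one), $v$ solves $\mc{L}_1^{x_0/\va} v = \wt F$ on $\wt D_1$ with boundary trace $D^\ga \wt G$ on $\wt\De_1$, where the coefficient $A(\,\cdot + x_0/\va)$ is VMO with the same modulus $\varrho$ independent of $\va$ and $x_0$, and where $\wt F, \wt G$ are the corresponding rescalings of $F, G$. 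For elliptic systems of order $2m$ with VMO coefficients in $C^1$ domains, classical Dong--Kim type estimates give the fixed-scale $C^{m-1,\lm}$ bound
\begin{align*}
\Big(\fint_{\wt D(0,\rho)} |\nabla^m v|^2\Big)^{1/2} \leq C\, \rho^{\lm-1} \bigg\{ \Big(\fint_{\wt D_1} |v|^2\Big)^{1/2} + \Big(\fint_{\wt D_1} |\wt F|^p\Big)^{1/p} + \|\wt G\|_{C^{m-1,1}(\wt D_1)} \bigg\}
\end{align*}
for all $0 < \rho < 1/2$. Scaling back with $\rho = r/\va$ produces the desired estimate at scale $r$, provided the bracketed quantity in the $v$-variables is controlled by an absolute constant. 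This last control is exactly Theorem~\ref{thholder} applied at scale $r = \va$ together with the normalization factor $\va^{-m+1-\lm}\mathcal{I}^{-1}$: (\ref{tholdere1}) (plus Caccioppoli, cf.\ Lemma~\ref{l2.6}) bounds $\va^{-2m} \fint_{D(x_0,\va)} |u_\va|^2$ by a multiple of $\va^{2\lm - 2}\mathcal{I}^2$, and the $\wt F$ and $\wt G$ terms are controlled by the same $\mathcal{I}$ under the chosen normalization.

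The main obstacle is the clean invocation of the small-scale $C^{m-1,\lm}$ estimate for $2m$-order systems with VMO coefficients in $C^1$ domains with the right-hand side in $L^p$ and boundary data in $C^{m-1,1}$; one must verify that this result is available in the range of $p$ assumed in Theorem~\ref{thholder} and that the constant depends only on $d, n, m, p, \mu, M$ and the VMO modulus $\varrho$. Once this is in place, combining the two regimes $r \geq \va$ and $r < \va$ delivers the Campanato--Morrey estimate uniformly in $r \in (0, 1/16)$, and hence~(\ref{co311}).
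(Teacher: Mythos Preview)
Your overall strategy is exactly the paper's: split into the large-scale regime $r\ge\va$ (handled by Theorem~\ref{thholder}) and the small-scale regime $r<\va$ (handled by blow-up plus the classical $C^{m-1,\lm}$ estimate for $\mc{L}_1$ with VMO coefficients in $C^1$ domains), and conclude via Campanato. The structure is correct.

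There is, however, a genuine gap in your small-scale step. You assert that ``(\ref{tholdere1}) (plus Caccioppoli) bounds $\va^{-2m}\fint_{D(x_0,\va)}|u_\va|^2$ by a multiple of $\va^{2\lm-2}\mc I^2$''. This is false as stated: take $u_\va\equiv 1$, $F\equiv 0$, $G\equiv 1$; then $\mc I$ is of order $1$ while $\va^{-2m}\fint_{D_\va}|u_\va|^2=\va^{-2m}\to\infty$. Theorem~\ref{thholder} controls $\na^m u_\va$, not $u_\va$ itself, and Caccioppoli goes in the wrong direction. The same issue hits your $\wt G$ term: with your normalization, $\|\wt G\|_{L^\infty(\wt D_1)}=\va^{-(m-1+\lm)}\mc I^{-1}\|G\|_{L^\infty(D_\va)}$, which blows up for nonzero constant $G$.

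The fix, which is precisely what the paper does, is to subtract an appropriate polynomial $P_{m-1}\in\mf P_{m-1}$ from both $u_\va$ and $G$ \emph{before} rescaling. This leaves $\na^m u_\va$ unchanged, and after rescaling the $L^2$ term becomes $\va^{-(m-1+\lm)}\big(\fint_{D_\va}|u_\va-P_{m-1}|^2\big)^{1/2}$, which is dominated by $\Phi_\lm(\va;u_\va)$. The crucial input is then the iterated bound (\ref{pt312}), namely $\Phi_\lm(\va;u_\va)\le C\,\Phi_\lm(1;u_\va)\le C\,\mc I$, established inside the proof of Theorem~\ref{thholder}. The lower-order pieces of $\wt G-P_{m-1}$ are handled the same way. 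With this correction your argument goes through and matches the paper's proof.
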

\begin{proof}
  It is enough to assume $0<\va< 1/2 $, as the other case is trivial. Setting $$v_\va(x)=u_\va(\va x), ~~~ \wt{F}(x) =\va^{2m} F(\va x), ~~~\wt{G}(x)=G(\va x),$$ then $v_\va$ satisfies
\begin{align}\label{pc311}
\mc{L}_1 v_\va =\wt{F}(x)   \quad\text{ in } D_1,  \quad\quad
 Tr (D^\ga  v_\va)=D^\ga\wt{G}(x)  \quad\text{ on }  \De_1\quad\text{for } 0\leq|\ga|\leq m-1.
\end{align}
By $ C^{m-1,\lm}$ estimates for operator $\mc{L}_1$ in $C^1$ domains (\cite{br,dk}) and a localization argument,  we have
for any $0<\lm <\min\{m+1-d/p, 1\}$ and $0<s< 1/2 ,$
\begin{align}\label{pc312}
\Big(\fint_{D_s}|\na^m v_\va|^2\Big)^{1/2}\leq C s^{\lm-1}  \Big\{ &\Big(\fint_{D_1} |v_\va|^2\Big)^{1/2}  +\Big(\fint_{D_1} |\wt{F}|^p\Big)^{1/p}
 + \sum_{j=0}^{m} \|\na^j\wt{G}\|_{L^\infty(D_1)} \Big\}.
\end{align}
By the change of variables, we obtain for $0<r< \va/2 ,$
\begin{align}\label{pc313}
\Big(\fint_{D_r}|\na^m u_\va|^2\Big)^{1/2}\leq C \Big( \frac{r}{\va}\Big)^{\lm-1} & \frac{1}{\va^m}  \Big\{\Big(\fint_{D_\va}| u_\va|^2\Big)^{1/2}\nonumber\\
&+\va^{2m}\Big( \fint_{D_\va}|F|^p\Big)^{1/p}
 + \sum_{j=0}^{m} \va^j \|\na^jG\|_{L^\infty(D_\va)}\Big\} .
\end{align}
Subtracting $P_{m-1}$ from $u_\va$ and $G$ simultaneously,  and taking (\ref{pt312}) in consideration, we obtain that
 \begin{align}\label{pc314}
\Big(\fint_{D_r}|\na^m u_\va|^2\Big)^{1/2}\leq C r^{\lm-1}  \Big\{&\Big(\fint_{D_1}|u_\va|^2 \Big)^{1/2} +\Big(\fint_{D_1} |F|^p\Big)^{1/p}  +\sum_{j=0}^{m} \|\na^jG\|_{L^\infty(D_1)}\Big\}
\end{align}for any $0<r\leq \va.$
In view of (\ref{tholdere1}), we know that (\ref{pc314}) holds for $ 0\leq r<  1/2 $.
Combining  (\ref{pc314}) with similar interior $C^{m-1,\lm}$ estimate  in \cite[Corollary 5.1]{nsx}, we obtain that
\begin{align*}
\Big(\fint_{B(x,r)\cap D_{1/4}}|\na^m u_\va|^2\Big)^{1/2}\leq C r^{\lm-1}  \Big\{&\Big(\fint_{D_1}|u_\va|^2 \Big)^{1/2} +\Big(\fint_{D_1} |F|^p\Big)^{1/p} +\sum_{j=0}^{m}  \|\na^jG\|_{L^\infty(D_1)}\Big\}
\end{align*}for any $0<r<r_0$ ($r_0$ is small) and $x\in D_{1/4}$.
This gives (\ref{co311}) by the Campanato  characterization of H\"{o}lder spaces.
\end{proof}
\begin{remark}
Under the assumptions of Corollary \ref{co3.1}, if $F,\, G\equiv 0$ we may use Poincar\'{e}'s inequality to deduce from (\ref{pc313}) that
\begin{align}\label{remark21}
\Big(\fint_{D_r}|\na^m u_\va|^2\Big)^{1/2}\leq C \Big( \frac{r}{\va}\Big)^{\lm-1}   \Big(\fint_{D_\va}| \na^m u_\va|^2\Big)^{1/2}
\end{align}   for any $0<r\leq\va.$ This will be used to establish the uniform $W^{m,p}$ estimate in the next section.
\end{remark}

\section{Uniform $W^{m,p}$ estimates }

This section is devoted to the uniform $W^{m,p}$ estimate for $u_\va$ in $C^1$ domains under the assumption $A\in V\!M\!O(\R^d)$.

\begin{lemma}\label{lemm4.1}
Assume that the coefficient matrix $A$ and the domain $\Om$ satisfy the assumptions of Theorem \ref{twmp}. Let $B(x_0,r), r<r_0,$ be a ball centered at $x_0\in \pa\Om$ with radius $r$. Suppose that $u_\va \in H^m(2B\cap\Om; \R^n)$ is a weak solution to
$$\mc{L}_\va u_\va=0 \quad\text{in } B(x_0,2r)\cap\Om, \quad\quad  Tr(D^\ga u_\va)=0 \quad \text{on }  B(x_0, 2r) \cap\pa\Om \quad\text{for }\,  0 \leq |\ga|\leq m-1.$$
Then for any $2\leq p<\infty$,
\begin{align}\label{l411}
\Big(\fint_{B(x_0,r)\cap\Om} |\na^m u_\va|^p\Big)^{1/p}  \leq  C \Big(\fint_{B(x_0,2r)\cap \Om } |\na^m u_\va |^2\Big)^{1/2},
\end{align}
where $C$ depends only on $d,n,m,p,\mu$ as well as $ M, \tau(t)$ in (\ref{c1c}) and $\varrho(t)$ in (\ref{vmo}).
\end{lemma}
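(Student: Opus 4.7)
The approach is a two-scale analysis splitting on whether $r\le\va$ or $r>\va$.

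For the small-scale regime $0<r\le\va$, I rescale by setting $v(y):=u_\va(x_0+\va y)$; then $v$ satisfies $\mc{L}_1 v=0$ in $B(0,2r/\va)\cap\wt\Om$ with zero Dirichlet traces, where $\wt\Om:=\va^{-1}(\Om-x_0)$ remains $C^1$ with modulus $\tau$ and $A\in V\!M\!O(\R^d)$ retains modulus $\varrho$. Classical $W^{m,p}$ theory for higher-order elliptic systems with VMO coefficients in $C^1$ domains (a routine extension of the second-order case, e.g.\ Byun--Wang) then gives
\begin{align*}
\Big(\fint_{B(0,s)\cap\wt\Om}|\na^m v|^p\Big)^{1/p}\le C\Big(\fint_{B(0,2s)\cap\wt\Om}|\na^m v|^2\Big)^{1/2},\quad 0<s\le 1,\ p\in[2,\infty),
\end{align*}
with $C$ depending only on $d,n,m,p,\mu,\varrho,\tau$. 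Undoing the scaling with $s=r/\va$ recovers (\ref{l411}) in this case.

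For the large-scale regime $\va<r<r_0$, I would cover $B(x_0,r)\cap\Om$ by a bounded-overlap family $\{B_j=B(y_j,\va/4)\}_j$ with each $y_j$ either on $\pa\Om$ or satisfying $B(y_j,\va/2)\subset\Om$. The small-scale result (on boundary $B_j$) and the analogous interior reverse H\"older from \cite{nsx} (on interior $B_j$) together yield
\begin{align*}
\int_{B_j\cap\Om}|\na^m u_\va|^p \le C|B_j|\Big(\fint_{2B_j\cap\Om}|\na^m u_\va|^2\Big)^{p/2},
\end{align*}
so summing over $j$ (using bounded overlap) gives
\begin{align*}
\int_{B(x_0,r)\cap\Om}|\na^m u_\va|^p \le C\sum_j|B_j|\Big(\fint_{2B_j\cap\Om}|\na^m u_\va|^2\Big)^{p/2}.
\end{align*}

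The principal obstacle, and the technical heart of the proof, is to bound this sum by $C|B(x_0,r)|\bigl(\fint_{B(x_0,2r)\cap\Om}|\na^m u_\va|^2\bigr)^{p/2}$ uniformly, i.e., to propagate the $L^2$ average of $|\na^m u_\va|^2$ from scale $\va$ to scale $r$ without loss. Naive combinations of Caccioppoli and Poincar\'e incur a loss of order $(r/\va)^\beta$ that must be avoided. To close this gap I would follow the scheme of \cite{shenad2007} for the second-order case: combine the uniform boundary $C^{m-1,\lm}$ estimate (Corollary \ref{co3.1}), the Poincar\'e--Hardy inequality $\|u_\va\|_{L^2(B(y,R)\cap\Om)}\le CR^m\|\na^m u_\va\|_{L^2(B(y,R)\cap\Om)}$ valid for balls centered on $\pa\Om\cap\overline{B(x_0,2r)}$ (via extension by zero, using the zero Dirichlet traces of $u_\va$), and the boundary Caccioppoli from Lemma \ref{l2.6} with $F=G=0$, in a Fefferman--Stein sharp-maximal or good-$\lm$ framework, to obtain the uniform bound $\fint_{2B_j\cap\Om}|\na^m u_\va|^2\le C\fint_{B(x_0,2r)\cap\Om}|\na^m u_\va|^2$. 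With that in hand, $\sum_j|B_j|\le C|B(x_0,r)|$ concludes the proof.
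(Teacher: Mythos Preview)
Your small-scale case $r\le\va$ is fine and matches the paper's implicit blow-up step. The gap is in the large-scale case: the uniform bound you aim for,
\[
\fint_{2B_j\cap\Om}|\na^m u_\va|^2\le C\fint_{B(x_0,2r)\cap\Om}|\na^m u_\va|^2
\]
for \emph{every} $\va$-scale ball $B_j$, is essentially a $C^{m-1,1}$ (Lipschitz) estimate down to scale $\va$, which is precisely what you do not have here---only the $C^{m-1,\lm}$ bound with $\lm<1$ is available. For balls $B_j$ whose center lies at distance $\sim\va$ from $\pa\Om$, estimate (\ref{tholdere1}) only yields a factor $(\va/r)^{\lm-1}$ on the right-hand side, which blows up as $\va/r\to0$; no good-$\lambda$ or sharp-maximal machinery removes this, because the underlying pointwise decay simply is not there. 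So the summation step cannot close with a covering by balls of fixed radius $\va$.

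The paper's argument avoids this by using a Whitney-type covering instead: for each $y\in B(x_0,r)\cap\Om$ one takes a ball $B(y,\de(y)/8)$ with $\de(y)=\mathrm{dist}(y,\pa(B(x_0,2)\cap\Om))$. The interior reverse H\"older inequality from \cite{nsx} passes from $L^p$ to $L^2$ on such balls, and then (\ref{tholdere1}) together with (\ref{remark21}) gives
\[
\Big(\fint_{B(y,\de(y)/4)}|\na^m u_\va|^2\Big)^{1/2}\le C\,\de(y)^{\lm-1}\Big(\int_{B(x_0,2)\cap\Om}|\na^m u_\va|^2\Big)^{1/2}.
\]
The loss $\de(y)^{\lm-1}$ is \emph{accepted}, not eliminated: raising to the $p$-th power and integrating in $y$ produces $\int\de(y)^{(\lm-1)p}\,dy$, which is finite once one fixes $\lm\in(1-1/p,1)$. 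A Fubini argument then converts the double integral into the desired $L^p$ bound. The point is that the $\va$-uniform covering forces you to confront the worst loss everywhere, whereas the distance-adapted covering lets the loss be absorbed by integration.
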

\begin{proof}
We only need to consider the case $\va<\frac{1}{4}$. Since if else $A(x/\va)$ satisfies (\ref{vmo}) uniformly, and (\ref{l411}) follows from the existing $W^{m,p}$ estimates for higher order  elliptic systems with $V\!M\!O$ coefficients, see e.g., \cite{br,dk}. Also, note that the function $\psi_r(x')=r^{-1}\psi(rx')$ satisfies condition (\ref{c1c}) uniformly. We can then fix our considerations on the case $r=1$ by rescaling. By the uniform interior $W^{m,p}$ estimates derived by the authors in \cite[Theorem 1.3]{nsx}, we have
 \begin{align*}
\Big(\fint_{B(x, t)} |\na^m u_\va|^p  \Big)^{1/p}\leq C \Big(\fint_{B(x, 2t)} |\na^m u_\va|^2dx\Big)^{1/2},
\end{align*}
 whenever $u_\va$ is a weak solution to $\mc{L}_\va u_\va=0$ in $B(x,2t)$.
Therefore, in view of (\ref{tholdere1}) and (\ref{remark21}),  we have for any $0<\lm<1$ and $y\in B(x_0,1)\cap \Om$,
\begin{align}\label{pl411}
\begin{split}
\Big(\fint_{B(y,\de(y)/8)} |\na^m u_\va|^p  \Big)^{1/p}&\leq C \Big(\fint_{B(y,\de(y)/4)} |\na^m u_\va|^2\Big)^{1/2}\\
&\leq C [\de(y)]^{\lm -1}\Big(\int_{B(x_0,2)\cap \Om}|\na^m u_\va|^2\Big)^{1/2},
\end{split}
\end{align}
where $\de(y)$ denotes the distance of $y$ to $\pa(B(x_0,2)\cap\Om)$. Fix $\lm\in (1-1/p, 1)$ and integrate (\ref{pl411}) with  respect to $y$ in $B(x_0,1)\cap \Om$. We obtain that
\begin{align}\label{pl412}
\int_{B(x_0,1)\cap \Om}\fint_{B(y,\de(y)/8)} |\na^m u_\va|^pdx\, dy\leq C  \|\na^m u_\va\|^p_{L^2(B(x_0,2)\cap \Om)}.
\end{align}
 We then deduce from Fubini's theorem that,
\begin{align}\label{pl413}
\int_{B(x_0,1)\cap \Om}|\na^m u_\va(x)|^p   \int_{\left\{y\in B(x_0,1)\cap \Om:\, |x-y|<\de(y)/8\right\} } \frac{1}{\de(y)^d} dy\, dx\leq C \|\na^m u_\va\|^p_{L^2(B(x_0,2)\cap \Om)}.
\end{align}
Observe that when $|x-y|<\de(y)/8$, it holds that
\begin{align}\label{pl414}
\frac{1}{2}\de(y)\leq \de(x) \leq 2\de(y).
\end{align}
We thus conclude that
$$ B(x_0,1)\cap \Om \cap B(x, \de(x)/16)\subset \big\{y\in B(x_0,1)\cap \Om: |x-y|<\de(y)/8\big\} $$ for any $x\in B(x_0,1)\cap \Om.$
 This, together with (\ref{pl414}), implies that
$$ \int_{\{y\in B(x_0,1)\cap \Om: |x-y|<\de(y)/8\} } \frac{1}{\de(y)^d}dy  \geq C_0>0.$$
Taking this into (\ref{pl413}), we obtain (\ref{l411}) immediately.
\end{proof}

With Lemma \ref{lemm4.1} at our disposal, we are ready to prove Theorem \ref{twmp}. The proof is based on a real-variable argument in the following theorem, which is formulated in \cite{shenan2005,shenad2007}.
\begin{theorem} \label{szwmp}
Let $q>2$ and $\Om$ be a bounded Lipschitz domain. Let $F\in L^2(\Om)$ and $f\in L^p(\Om)$ for some $2<p<q<\infty$. Suppose that for each ball $B\subset\R^d$ with the property that $|B|<c_0|\Om|$, and either $4B\subset\Om$ or $B$ is centered on $\pa\Om$, there exists two measurable functions $F_B$ and $R_B$ on $2B\cap\Om$ such that
\begin{align}
&|F|\leq |F_B|+|R_B| \text{ on } 2B\cap\Om,\label{tszwmp1} \\
&\Big(\fint_{2B\cap\Om} |R_B|^q\Big)^{1/q}\leq C_1 \Big\{ \Big(\fint_{4B\cap\Om} |F|^2\Big)^{1/2} +\sup_{B\subset B' \subset 4B_0} \Big(\fint_{B'\cap\Om} |f|^2\Big)^{1/2}\Big\},\label{tszwmp2} \\
  &\Big(\fint_{2B\cap\Om} |F_B|^2\Big)^{1/2} \leq C_2 \sup_{B\subset B' \subset 4B_0} \Big(\fint_{B'\cap\Om} |f|^2\Big)^{1/2}+\de\Big(\int_{4B\cap\Om} |F|^2\Big)^{1/2},\label{tszwmp3}
\end{align}
where $C_1,C_2>0, 0<c_0<1.$  Then there exists $\de_0>0$, depending only on $C_1,C_2,c_0, p,q$ and the Lipschitz character of $\Om,$ such that, for any $0<\de<\de_0$, $F\in L^p(\Omega)$ and
\begin{align} \label{tszwmp4}
\Big(\fint_{\Om} |F|^p\Big)^{1/p} \leq C   \Big(\fint_{\Om} |F|^2\Big)^{1/2}+ \Big(\fint_{\Om} |f|^p\Big)^{1/p},
\end{align}
where $C$ depends only on $d, C_1,C_2, c_0, p,q$ and the Lipschitz character of $\Om$.
\end{theorem}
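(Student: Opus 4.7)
The plan is a good-$\lm$ argument built on a Calder\'on--Zygmund stopping-time decomposition, in the spirit of Caffarelli--Peral. After normalizing so that the quantities on the right of (\ref{tszwmp4}) are of unit size, I would work with a localized Hardy--Littlewood maximal function $\mc{M}_\Om$ on $\Om$ (either restrict the usual maximal operator to $\Om$ or extend $F, f$ by zero). For $\lm$ exceeding a fixed multiple of $\fint_\Om |F|^2$, the level set $E_\lm = \{\mc{M}_\Om(|F|^2)>\lm\}$ has measure less than $c_0|\Om|$, activating the size constraint $|B|<c_0|\Om|$ in the hypotheses.

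Apply a Vitali/Whitney covering to $E_\lm$ to produce a disjoint family $\{B_i\}$ with $|B_i|<c_0|\Om|$, each $B_i$ either satisfying $4B_i\subset\Om$ or centered on $\pa\Om$, and with $\fint_{4B_i\cap\Om}|F|^2\sim\lm$ by the stopping-time choice --- precisely the configurations demanded by (\ref{tszwmp1})--(\ref{tszwmp3}). On each $B_i$, split $|F|^2\leq 2|F_{B_i}|^2+2|R_{B_i}|^2$ via (\ref{tszwmp1}). By (\ref{tszwmp2}) and Chebyshev,
\[
\big|\{x\in 2B_i\cap\Om: |R_{B_i}|^2>N\lm\}\big| \leq \frac{C|B_i|}{(N\lm)^{q/2}}\Big(\lm^{q/2}+\big(\mc{M}_\Om(|f|^2)(x_i)\big)^{q/2}\Big),
\]
while (\ref{tszwmp3}) and Chebyshev give
\[
\big|\{x\in 2B_i\cap\Om: |F_{B_i}|^2>N\lm\}\big| \leq \frac{C|B_i|}{N\lm}\big(\de^2\lm+\mc{M}_\Om(|f|^2)(x_i)\big).
\]
Summing over $i$ and using the packing bound $\sum_i|B_i|\leq C|E_\lm|$ yields the good-$\lm$ inequality
\[
|E_{N\lm}| \leq C\big(N^{-q/2}+\de^2 N^{-1}\big)|E_\lm| + C\big|\{\mc{M}_\Om(|f|^2)>c\lm\}\big|.
\]

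To finish, multiply by $\lm^{p/2-1}$ and integrate in $\lm>0$: the left side dominates $\|F\|_{L^p(\Om)}^p$ by Lebesgue differentiation, while the right side splits into an absorbable piece $C(N^{p/2-q/2}+\de^2N^{p/2-1})\|F\|_{L^p(\Om)}^p$ and a good piece bounded by $\|f\|_{L^p(\Om)}^p$ using the $L^{p/2}$ boundedness of $\mc{M}_\Om$ (here the assumption $p>2$ is essential). The hypothesis $p<q$ makes $p/2-q/2<0$, so $N$ can be taken large enough that $CN^{p/2-q/2}<1/4$; then $\de_0$ is chosen small enough that $C\de_0^2 N^{p/2-1}<1/4$, producing (\ref{tszwmp4}). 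The main technical obstacle is the boundary bookkeeping: one must ensure the Vitali stopping-time halts before $|B|$ violates the constraint $|B|<c_0|\Om|$, verify that balls near $\pa\Om$ can always be enlarged to become boundary-centered, and check that the portion of $\Om$ not covered by $E_\lm$ contributes only an $L^2$-type remainder already bounded by the right-hand side of (\ref{tszwmp4}).
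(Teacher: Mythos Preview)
The paper does not prove this theorem: it merely states it and cites \cite{shenan2005,shenad2007} (``a real-variable argument \ldots\ which is formulated in [31,32]''). Your proposal --- a Calder\'on--Zygmund stopping-time/Vitali decomposition followed by a good-$\lambda$ inequality exploiting the higher integrability (\ref{tszwmp2}) of $R_B$ and the smallness (\ref{tszwmp3}) of $F_B$, then integrated in $\lambda^{p/2-1}\,d\lambda$ with absorption via $p<q$ and $\delta$ small --- is exactly the Caffarelli--Peral scheme as developed in those references, so your approach is the intended one and the outline is correct.

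One small remark on the execution: in the cited proofs the covering is taken so that each ball is either well inside $\Omega$ ($4B\subset\Omega$) or has been recentered on $\partial\Omega$ with comparable radius, and the ``first bad ancestor'' in the dyadic/Vitali selection guarantees $\fint_{4B\cap\Omega}|F|^2\leq C\lambda$ rather than $\sim\lambda$; you have flagged this boundary bookkeeping yourself, and it is indeed the only place where care is needed. The symbol $4B_0$ in (\ref{tszwmp2})--(\ref{tszwmp3}) appears to be a typo in the paper's statement (in Shen's original the supremum is over balls $B'\supset B$), which does not affect your argument.
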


\begin{proof}[\bf Proof of Theorem \ref{twmp}.] Since the desired estimate is trivial when $p=2$, it suffices  to consider the case $p>2.$ Thanks to the extension theorem in \cite[p.223]{mms}, for any $\dot{g}=\{g_\ga\}_{|\ga|\leq m-1}\in \dot{B}^{m-1/p}_p(\pa\Om)$ there exist a $G\in W^{m,p}(\Om)$ such that $$ Tr(D^\ga G)=g_\ga \quad\text{for } 0\leq |\ga|\leq m-1, \quad\quad   \|G\|_{ W^{m,p}(\Om)}\leq C \|\dot{g}\|_{\dot{B}^{m-1/p}_p(\pa\Om)}. $$  Therefore, we can restrict our investigations to the problem with homogeneous boundary conditions.
\begin{align*}
\mc{L}_\va \overline{u}_\va=\sum_{|\al|\leq m}D^\al \overline{f}^\al \quad\text{ in } \Om, \quad\quad
 Tr(D^\ga \overline{u}_\va)=0 \quad\text{ on } \pa\Om \quad\text{for}~0\leq |\ga|\leq m-1,
\end{align*}
where $\overline{u}_\va=u_\va-G$ and
 \begin{align*}
 \overline{f}^\al=f^\al+(-1)^{m+1}\sum_{|\be|=m}A^{\al\be} D^\be G   \quad\text{ for }  |\al|=m , \quad\text{and}\quad
 \overline{f}^\al=f^\al  \quad\text{ for } |\al|< m .
 \end{align*}
Let $F=|\na^m\overline{u}_\va|$ and $f(x)=\sum_{|\al|\leq m}|\overline{f}^\al|$. We only need to construct the functions $F_B$, $R_B$ and then verify the conditions (\ref{tszwmp1}),   (\ref{tszwmp2}) and (\ref{tszwmp3}) to hold for balls $B(x_0,r)$ with the property $|B|<c_0|\Om|$ and either $4B\subset\Om$ or $B$ is centered on $\pa\Om$. The case of $4B\subset\Om$ has been investigated  for interior $W^{m,p}$ estimates in \cite{nsx}. So here we only consider the situation that $B$ is centered on $\pa\Om$.

Let $B=B(x_0,r)$ for some $x_0\in \pa \Om$ and $0<r<r_0/16.$ Let $v_\va \in H_0^{m}(4B\cap\Om; \R^n)$ be the solution to  $\mc{L}_\va v_\va=\sum_{|\al|\leq m}D^\al \overline{f}^\al $ in  $4B\cap\Om $ and set $$F_B=|\na^m v_\va|,\quad R_B=|\na^m w_\va|,\quad w_\va=\overline{u}_\va-v_\va.$$
Then it is obvious that
\begin{align*}
 &|F|\leq |F_B|+|R_B| \text{ on } 2B\cap\Om,\\
  &\Big(\fint_{2B\cap\Om} |F_B|^2\Big)^{1/2}\leq C\Big(\fint_{4B\cap\Om} |\na^m v_\va|^2\Big)^{1/2}  \leq C  \Big(\fint_{4B\cap\Om} |f|^2\Big)^{1/2},
\end{align*}
which imply the conditions (\ref{tszwmp1}) and (\ref{tszwmp3}). Furthermore, note that $$\mc{L}_\va w_\va=0 \text{ in } 4B\cap\Om,~~~~~~~Tr(D^\ga w_\va)=0  \quad\text{on }  4B \cap\pa\Om  \quad\text{for } 0 \leq |\ga|\leq m-1.  $$  By Lemma \ref{lemm4.1}, we know that for any $2<p<\infty$,
\begin{align*}
\Big(\fint_{2B\cap\Om} |\na^m w_\va|^p\Big)^{1/p} & \leq  C \Big(\fint_{4B\cap \Om } |\na^m w_\va |^2\Big)^{1/2}\nonumber\\
& \leq C \Big(\fint_{4B\cap \Om } |\na^m \overline{u}_\va |^2\Big)^{1/2} +C \Big(\fint_{4B\cap \Om } |\na^m v_\va |^2\Big)^{1/2}\nonumber\\
&\leq C \Big(\fint_{4B\cap \Om } |\na^m \overline{u}_\va |^2\Big)^{1/2} +C  \Big(\fint_{4B\cap\Om} |f|^2\Big)^{1/2},
\end{align*}
which implies (\ref{tszwmp2}). Noticing that all the conditions in Theorem \ref{szwmp} are verified, (\ref{twm11}) follows from (\ref{tszwmp4}) immediately.
\end{proof}
Note that if $u_\va \in W_0^{m,p}(\Om, \R^n)$ is a weak solution to $\mc{L}_\va u_\va=f$ in $\Om$, and $u_\va^*\in W_0^{m,p'}(\Om, \R^n)$ is a weak solution to $\mc{L}^*_\va u^*_\va=    f^{*}$ in $\Om$, where $p'=p/(p-1)$. Then we have
\begin{align*}
 \langle   f , u^*_\va \rangle_{W^{-m,p}\times W_0^{m,p'}}=\sum_{|\al|=|\be|=m}\int_\Om  A^{\al\be}(x/\va)D^\be u_\va D^\al u^*_\va=\langle   f^{*}, u_\va \rangle_{W^{-m,p'}\times W_0^{m,p}}.
\end{align*}
Therefore,  Theorem \ref{twmp} also holds for  $1\!<\!p\!<\!2$ by a standard duality argument.

As a consequence of Theorem \ref{twmp}, one can obtain $C^{m-1,\lm}$ estimate on $u_\va$ in $\Om$ immediately. However, we choose to provide a local version using the localization argument mentioned in (\ref{pl322}) and  (\ref{pc312}) (where we did not provide any details). The result  will also provide a direct comparison to Theorem \ref{thholder} as well as Corollary \ref{co3.1}.
\begin{corollary}\label{co4.1}
Suppose that $\Omega$ is a bounded $C^1$ domain in $\mb{R}^d$, $ A\in\!V\!M\!O(\R^d)$  satisfies (\ref{cod1})--(\ref{cod3}). For any $x_0\in \pa\Om$ and $0<r \leq c_0$, let $u_\varepsilon\in H^m(\Om\cap B(x_0,4r); \R^n)$ be a weak solution to
$$\mc{L}_\va u_\va= \sum_{|\ze|\leq m} D^\ze f^\ze\quad\text{in } \Om\cap B(x_0,4r), \quad \quad Tr(D^\ga u_\va)=D^\ga G \quad\text{on } \pa\Om\cap B(x_0,4r),\, 0 \leq|\ga|\leq m-1,$$
where $f^\ze\in L^p(\Om\cap B(x_0,4r); \R^n)$ for all $|\ze|\leq m$, and $ G \in W^{m,p}(\Om\cap B(x_0,4r); \R^n) $ with $p> d$ and $p\geq 2$.
Then for any $x,y\in \Om\cap B(x_0,r)$,
\begin{align}
&|\na^{m-1} u_\varepsilon(x)-\na^{m-1} u_\varepsilon(y)|
\leq\, C\frac{|x-y|^\lm}{r^{m-1+\lm}}  \Big\{\Big(\fint_{\Om\cap B(x_0,4r)}|u_\varepsilon|^2\Big)^{1/2} + r^{m-d/p} \|G\|_{W^{m,p}(\Om\cap B(x_0,4r))}\nonumber \\
&\quad\quad\quad\quad\quad\quad\quad\quad\quad\quad\quad\quad  +\sum_{|\zeta|\leq m} r^{2m-|\zeta|}\Big(\fint_{\Om\cap B(x_0,4r)}|f^\zeta|^p\Big)^{1/p} \Big\} ,\label{co410}\\
&\|\nabla^k u_\varepsilon\|_{L^\infty(\Om\cap B(x_0,r))}\leq C \,  r^{-k} \Big\{\Big(\fint_{\Om\cap B(x_0,4r)}|u_\varepsilon|^2\Big)^{1/2}+ r^{m-d/p} \|G\|_{W^{m,p}(\Om\cap B(x_0,4r))} \nonumber\\
&\quad\quad\quad\quad\quad\quad\quad\quad\quad\,\,+ \sum_{|\zeta|\leq m} r^{2m-|\zeta|}\Big(\fint_{\Om\cap B(x_0,4r)}|f^\zeta|^p\Big)^{1/p} \Big\} ,\label{co411}
\end{align}   where $0\leq k \leq m-1, \lm=1-d/p,$
 and $C$ depends only on $d, n,m, p, \mu, \Om$ as well as $\varrho(t)$ in (\ref{vmo}).
 \end{corollary}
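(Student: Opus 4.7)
The plan is to upgrade the global $W^{m,p}$ bound of Theorem \ref{twmp} to a local one on $\Omega\cap B(x_0,r)$ by a cutoff/extension localization, and then invoke the Sobolev embedding $W^{m,p}\hookrightarrow C^{m-1,1-d/p}$ (valid since $p>d$) to obtain (\ref{co410}). The pointwise bound (\ref{co411}) will follow from (\ref{co410}) together with the local $W^{m,p}$ estimate by scaling.

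First, by translation and the rescaling $v(y)=u_\varepsilon(x_0+ry)$, $\tilde\varepsilon=\varepsilon/r$, I would reduce to the normalized case $x_0=0$, $r=1$. The defining function of the rescaled domain still satisfies (\ref{c1c}) with a modulus bounded by $\tau(r\,\cdot)$, so the constants stay uniform in $r\le c_0$. Next I would build an auxiliary bounded $C^1$ domain $\tilde\Omega$ with $\Omega\cap B(0,3)\subset\tilde\Omega$, $\partial\tilde\Omega\cap B(0,3)=\partial\Omega\cap B(0,3)$, and $C^1$ character controlled by that of $\Omega$, and extend $G$ to $\tilde G\in W^{m,p}(\tilde\Omega;\R^n)$ via a standard extension theorem with $\|\tilde G\|_{W^{m,p}(\tilde\Omega)}\le C\|G\|_{W^{m,p}(\Omega\cap B(0,4))}$.

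Pick a cutoff $\eta\in C_c^\infty(B(0,2))$ with $\eta\equiv 1$ on $B(0,1)$ and $|\nabla^k\eta|\le C$, and set $w_\varepsilon=\eta(u_\varepsilon-\tilde G)$. Since $u_\varepsilon-\tilde G$ has zero trace of all derivatives up to order $m-1$ on $\partial\Omega\cap B(0,4)$ and $\eta$ vanishes near the rest of $\partial\tilde\Omega$, we have $w_\varepsilon\in H^m_0(\tilde\Omega;\R^n)$. A direct computation via the Leibniz rule gives
\begin{equation*}
\mathcal{L}_\varepsilon w_\varepsilon=\sum_{|\zeta|\le m}D^\zeta \tilde f^\zeta \quad\text{in }\tilde\Omega,
\end{equation*}
where the $\tilde f^\zeta$ assemble the original data $\eta f^\zeta$, contributions of the form $A^{\alpha\beta}(x/\varepsilon)D^\beta\tilde G$, and commutator terms
\begin{equation*}
(D^{\alpha-\alpha'}\eta)\,A^{\alpha\beta}(x/\varepsilon)\,D^{\beta'}(u_\varepsilon-\tilde G)\quad\text{with } |\alpha'|<|\alpha|,\ |\beta'|<m,
\end{equation*}
supported in the annulus $B(0,2)\setminus B(0,1)$. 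Applying Theorem \ref{twmp} on $\tilde\Omega$ (with zero Whitney-Besov data) yields $\|w_\varepsilon\|_{W^{m,p}(\tilde\Omega)}\le C\sum_{|\zeta|\le m}\|\tilde f^\zeta\|_{L^p(\tilde\Omega)}$. Since $w_\varepsilon=u_\varepsilon-\tilde G$ on $\Omega\cap B(0,1)$, Sobolev embedding $W^{m,p}\hookrightarrow C^{m-1,1-d/p}$ on $\Omega\cap B(0,1)$ delivers the desired H\"older bound, and undoing the scaling recovers the powers of $r$ in (\ref{co410}). The $L^\infty$ estimate (\ref{co411}) for $k\le m-1$ is then immediate from this local $W^{m,p}$ control together with Sobolev embedding.

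The main obstacle is the bookkeeping of the commutator contributions to $\tilde f^\zeta$, which involve $\|\nabla^j u_\varepsilon\|_{L^p}$ for $j\le m-1$ on the annulus. These must be absorbed into the data on the right-hand side. The standard remedy is to run the above argument on a nested family of cutoffs $\eta_k\equiv 1$ on $B(0,1+k/N)$ for $0\le k\le N$ and combine the resulting inequalities with the interpolation bound $\|\nabla^j u_\varepsilon\|_{L^p}\le \epsilon\|\nabla^m u_\varepsilon\|_{L^p}+C_\epsilon\|u_\varepsilon\|_{L^2}$ and Lemma \ref{l2.6} (Caccioppoli, in the form of Remark \ref{remark0} to accommodate the $L^p$ norm of the data), so that all intermediate derivative norms of $u_\varepsilon$ on the annulus are dominated by $\|u_\varepsilon\|_{L^2(\Omega\cap B(0,4))}$ plus the data norms appearing in (\ref{co410}). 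Carrying out this absorption while keeping the $r$-weights correct is the only delicate point; the rest is routine.
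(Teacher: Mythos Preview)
Your overall strategy --- localize by a cutoff, apply the global $W^{m,p}$ estimate of Theorem~\ref{twmp} on an auxiliary $C^1$ domain, and then invoke the Sobolev embedding $W^{m,p}\hookrightarrow C^{m-1,1-d/p}$ --- is exactly the paper's plan, and the rescaling to $r=1$ is also the same. The difficulty, as you correctly anticipate, is entirely in the commutator terms, but your description of them and your proposed absorption both contain a genuine gap.

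When you expand $\mathcal{L}_\varepsilon(\eta v)$ with $v=u_\varepsilon-\tilde G$, you obtain, besides $\eta\,\mathcal{L}_\varepsilon v$ and the ``good'' terms $D^\alpha\big(A^{\alpha\beta}D^{\beta'}v\,D^{\beta''}\eta\big)$ with $|\beta'|\le m-1$, also terms of the form
\[
\sum_{\substack{\alpha'+\alpha''=\alpha\\ |\alpha''|\ge 1}} C(\alpha')\,D^{\alpha''}\eta\cdot D^{\alpha'}\big(A^{\alpha\beta}(x/\varepsilon)\,D^\beta v\big),\qquad |\beta|=m.
\]
These involve the \emph{top-order} derivative $\nabla^m v$, not only $\nabla^{j}v$ with $j\le m-1$; already for $m=1$ this is the familiar $\nabla\eta\cdot A\nabla v$ term. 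Consequently your interpolation $\|\nabla^j v\|_{L^p}\le\epsilon\|\nabla^m v\|_{L^p}+C_\epsilon\|v\|_{L^2}$ does not touch them, and the nested-cutoff absorption you sketch would require $\nabla^m v\in L^p$ on the larger set to even state the bound --- precisely what you are trying to prove.

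The paper's remedy is different: since these bad terms carry only $|\alpha'|\le m-1$ outer derivatives, one has, by dual Sobolev embedding,
\[
D^{\alpha'}\big(A^{\alpha\beta}_\varepsilon D^\beta v\big)\,D^{\alpha''}\eta\in W^{-m,p}(\tilde D)\quad\text{provided}\quad \nabla^m v\in L^{q_\ell}(\tilde D),\ \ q_\ell=\tfrac{dp}{(m-\ell)p+d}<p,\ \ \ell=|\alpha'|.
\]
Thus one application of Theorem~\ref{twmp} yields $\|\nabla^m v\|_{L^p(B_1\cap\Omega)}\le C\big(\text{data}\big)+C\|\nabla^m v\|_{L^{p_1}(B_2\cap\Omega)}$ with $p_1=dp/(d+p)$. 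Iterating this on a finite nested family of balls bootstraps the integrability exponent from $2$ up to $p$, and then Caccioppoli's inequality converts $\|\nabla^m v\|_{L^2}$ into $\|u_\varepsilon\|_{L^2}$. After this correction, the rest of your argument (and in particular the Sobolev-embedding conclusion) is fine.
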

\begin{proof} 
By rescaling and translation, we may assume that $r=1, x_0=0.$ Denote $B(0,r)$ as $B_r$ and
let $\wt{D}$ be a $C^1$ domain such that $B_{3/2}\cap \Om\subset \wt{D}\subset B_2\cap \Om$. Set $v_\va=u_\va-G$. It is obvious that $v_\va$ satisfies
\begin{align}\label{plco411}
\begin{split}
&\mc{L}_\va v_\va=\sum_{|\al|=m} D^\al \{A_\va^{\al\be} D^\be G\} + \sum_{|\ze|\leq m} D^\ze f^\ze\, \quad\text{ in } \Om\cap B_4, \\
& Tr(D^\ga v_\va)=0 \,\quad\text{ on } \pa\Om\cap B_4 \quad\text{for } 0<|\ga|\leq m-1.\end{split}\end{align}
Let $\phi \in C_c^\infty(B_{3/2})$ with $\phi=1$ in $B_1$ and $|\na^k\phi|\leq C2^k$. We have
\begin{align*}
&\mc{L}_\va (v_\va \phi)=\sum_{|\al|=|\be|=m}\Big\{ D^\al \big\{A_\va^{\al\be} D^\be G\big\}\phi +\sum_{\al'+\al''=\al, |\al''|\geq 1} C(\al')D^{\al'}\big\{A_\va^{\al\be} D^\be v_\va \big\} D^{\al''} \phi\\
 &\quad\quad\quad\quad\quad\quad\quad + \sum_{\be'+\be''=\be, |\be''|\geq 1}  C(\be') D^\al \big\{A_\va^{\al\be} D^{\be'} v_\va D^{\be''}\phi\big\} \Big\}+ \sum_{|\ze|\leq m} D^\ze f^\ze \phi    \quad \text{ in } \wt{D},\\
 &Tr(D^\ga (v_\va \phi))=0,  \quad\text{ on } \pa\wt{D}  \quad\text{for}\quad  0<|\ga|\leq m-1.
\end{align*}
Observe that for $0\leq \ell=|\al'|\leq  m-1 $, $$   D^{\al'} \big\{A_\va^{\al\be} D^\be v_\va \big\} D^{\al''} \phi  \in  W^{-m,p}(\wt{D}) \quad \text{ if }\,  \na^m v_\va \in L^{q_\ell}(\wt{D}) \text{ with } q_\ell= \frac{dp}{(m-\ell)p+d}~(<p).$$
 Thus, we may deduce from Theorem \ref{twmp} that
\begin{align}\label{pcoco}
\|v_\va\|_{W^{m,p}(B_1\cap \Om)}
  &\leq C  \Big(\int_{\wt{D}} |\na^m G|^p \Big)^{1/p} + C\sum_{0\leq k\leq m-1} \Big(\int_{\wt{D}} |\na^k v_\va|^p \Big)^{1/p} \nonumber\\
  &\quad\quad\quad\quad  + C\sum_{0\leq \ell\leq m-1} \Big(\int_{\wt{D}} |\na^m v_\va|^{q_\ell} \Big)^{1/{q_\ell}}+  C\sum_{|\ze|\leq m}\Big( \int_{\wt{D}} |f^\ze|^p \Big )^{1/p}\nonumber\\
&\leq C  \Big(\int_{B_2\cap \Om} |\na^m G|^p \Big)^{1/p} + C\sum_{0\leq k\leq m-1} \Big(\int_{B_2\cap \Om} |\na^k v_\va|^p \Big)^{1/p} \nonumber\\
&\quad\quad\quad\quad+ C  \sum_{0\leq \ell\leq m-1} \Big(\int_{B_2\cap \Om} |\na^m v_\va|^{q_\ell} \Big)^{1/{q_\ell}} + C\sum_{|\ze|\leq m} \Big(\int_{B_2\cap \Om} |f^\ze|^p \Big)^{1/p}.
\end{align}
Let $p_1= dp/(d+p).$ Thanks to the Poincar\'{e} inequality and Sobolev imbedding,  we have
\begin{align*}
&\|\na^k v_\va\|_{L^p(B_2\cap \Om)}\leq C  \|\na^m v_\va\|_{L^{p_1}(B_2\cap \Om)} \quad \text{for } 0\leq k\leq m-1,\\
&\|\na^m v_\va\|_{L^{q_\ell}(B_2\cap \Om)}\leq C  \|\na^m v_\va\|_{L^{p_1}(B_2\cap \Om)}  \quad \text{for } q_\ell=\frac{dp}{(m-\ell)p+d},\, 0\leq \ell\leq m-1,
\end{align*}
which, combined with (\ref{pcoco}), implies that
\begin{align}
& \|v_\va\|_{W^{m,p}(B_1\cap \Om)}  \leq C   \|\na^m G\|_{L^p(B_2\cap \Om)}  +  C  \|\na^m v_\va\|_{L^{p_1}(B_2\cap \Om)} +C\sum_{|\ze|\leq m} \|f^\ze\|_{L^p(B_2\cap \Om)}.
\end{align}
If $p_1>2$, we can  perform a bootstrap argument for finite times to obtain that
  \begin{align*}
\|v_\va\|_{W^{m,p}(B_1\cap \Om)}
 \leq C \|\na^m G\|_{L^p(B_3\cap \Om)}  + C  \|\na^m v_\va\|_{L^{2}(B_3\cap \Om)}+C\sum_{|\ze|\leq m} \|f^\ze\|_{L^p(B_3\cap \Om)}.
\end{align*}
 By Caccioppoli's inequality, this implies that
 \begin{align*}
\|u_\va\|_{W^{m,p}(B_1\cap \Om)}
&\leq C \Big\{  \|\na^m  v_\va\|_{L^2(B_3\cap \Om)} + \| G\|_{W^{m,p}( B_4\cap \Om)}+  \sum_{|\ze|\leq m} \|f^\ze\|_{L^p(B_4\cap \Om)}\Big\}\\
& \leq C \Big\{ \|u_\va\|_{L^2(B_4\cap \Om)}+ \| G\|_{W^{m,p}( B_4\cap \Om)}+\sum_{|\ze|\leq m} \|f^\ze\|_{L^p(B_4\cap \Om)} \Big\},
\end{align*}
which gives (\ref{co410}) and (\ref{co411})  by Sobolev imbedding.
\end{proof}

\section{Uniform $C^{m-1,1}$ estimates}
In this section, we consider uniform boundary $C^{m-1, 1}$ estimates for $u_\va$ in $C^{1,\te} (0<\te<1)$ domains.
Throughout the section, we always assume that $A$ satisfies (\ref{cod1}) and (\ref{cod3}).
Similar to Section 4, we only need to consider equations in $(D_r, \De_r)$ defined as in (\ref{drder}) with the defining function $\psi\in C^{1,\te}(\R^{d-1})$ satisfying  $ \psi(0)=0,  \|\na\psi\|_{C^\te(\R^{d-1})}\leq M_1$.

Let $u_\va\in H^m(D_{2}; \mb{R}^n )$ be a weak solution to
\begin{align*}
 \mc{L}_\va u_\va= \sum_{|\al|\leq m-1}D^\al f^\al    \quad\text{ in } D_1, \quad\quad
 Tr (D^\ga u_\va)=D^\ga G  \quad\text{ on }  \De_1  \quad\text{for  } 0\leq|\ga|\leq m-1,
\end{align*}
where $f^\al\in L^{q}(D_1; \R^n) $ with $ q>d, q\geq 2$, and $ G\in C^{m,\sg}(D_1; \R^n)$ for some $0<\sg\leq\te$.
 For $0<r\leq 1$, define the following auxiliary quantities,
\begin{align}
 \Phi(r,u_\va)=
\frac{1}{r^m}\inf_{P_{m-1}\in \mf{P}_{m-1}}\Big\{&\Big(\fint_{D_{r}}|u_\va-P_{m-1} |^2 \Big)^{1/2}
  + \sum_{|\al|\leq m-1}r^{2m-|\al|} \Big(\fint_{D_r}|f^\al|^q\Big)^{1/q}\nonumber\\ &+ \sum_{j=0}^{m} r^j   \|\na^j (G-P_{m-1})\|_{L^\infty(D_r)}
  \Big\},\label{phi}\end{align}
  \begin{align}
H(r; u_\va)=
\frac{1}{r^{m}}\inf_{P_{m}\in \mf{P}_{m}} \Big\{&\Big(\fint_{D_r}|u_\va-P_{m}|^2 \Big)^{1/2}+ \sum_{|\al|\leq m-1}r^{2m-|\al|} \Big(\fint_{D_r}|f^\al|^q\Big)^{1/q}
 \nonumber\\ &+  \sum_{j=0}^{m} r^j   \|\na^j (G-P_{m})\|_{L^\infty(D_r)}
+r^{m+\sg}\|\na^m (G -P_{m})\|_{C^{0,\sg}(D_r)} \Big\} .\label{hru}
\end{align}

\begin{lemma}\label{lemm5.1}
For $0<\va\leq r\leq1$, let $\Phi(r; u_\va)$ be defined as in (\ref{phi}). Then there exists $u_0\in H^m(D_r; \mb{R}^n )$ such that
$\mc{L}_0 u_0= \sum_{|\al|\leq m-1}D^\al f^\al$ in $D_r$, $Tr (D^\ga u_0)=D^\ga G $ on $\De_r$ for $ 0\leq|\ga|\leq m-1$, and
\begin{align}\label{l511}
\frac{1}{r^m}\Big(\fint_{D_r}|u_\va-u_0|^2 \Big)^{1/2}\leq C \Big(\frac{\va}{r}\Big)^{1/4} \Phi(2r; u_\va),
\end{align}
where  $C$  depends only on $d, n,m,  q, \sigma, \mu$ and $M$ in (\ref{c1c}).
\end{lemma}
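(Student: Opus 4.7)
\medskip
\noindent\textbf{Proof proposal for Lemma \ref{lemm5.1}.}

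The plan is to mimic the argument of Lemma \ref{lemm3.1}, adapted to the $C^{m-1,1}$ setting and to the lower-order forcing $\sum_{|\alpha|\le m-1}D^{\alpha}f^{\alpha}$. First I would reduce to the case $r=1$ by the rescaling $v_\va(x)=u_\va(rx)$, $\wt G(x)=G(rx)$, $\wt f^{\al}(x)=r^{2m-|\al|}f^{\al}(rx)$. Under this rescaling $v_\va$ solves $\mathcal{L}_{\va/r}v_\va=\sum_{|\al|\le m-1}D^{\al}\wt f^{\al}$ in $\wt D_2$ with Dirichlet data $D^{\ga}\wt G$ on $\wt\Delta_2$, and the defining function $\psi_r(x')=r^{-1}\psi(rx')$ inherits the same $C^{1,\te}$ bound. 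The quantity $\Phi(2r;u_\va)$ transforms into the corresponding $\Phi(2;v_\va)$, so it suffices to prove (\ref{l511}) at scale $r=1$ with $\va$ replaced by $\va/r$, which is allowed since $\va\le r$.

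At scale $r=1$, I would apply Caccioppoli's inequality (Lemma \ref{l2.6}; note $f^\al\in L^q\subset L^2$ since $q\ge 2$ and $D_2$ is bounded) on nested subdomains to obtain
\[
\|u_\va\|_{H^m(D_{3/2})}\le C\Bigl\{\|u_\va\|_{L^2(D_2)}+\sum_{|\al|\le m-1}\|f^\al\|_{L^q(D_2)}+\sum_{j=0}^{m}\|\na^j G\|_{L^2(D_2)}\Bigr\}.
\]
Then the co-area formula produces some slice $t\in[5/4,3/2]$ along which
\[
\|u_\va\|_{H^m(\pa D_t\setminus \De_2)}\le C\cdot(\text{RHS above}),
\]
which controls the Whitney array $\{D^{\ga}u_\va\}_{|\ga|\le m-1}$ on $\pa D_t$ in $W\!A^{m,2}$. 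I then define $u_0\in H^m(D_t;\R^n)$ as the weak solution of $\mathcal{L}_0 u_0=\sum_{|\al|\le m-1}D^{\al}f^{\al}$ in $D_t$ with $\mathrm{Tr}(D^{\ga}u_0)=\mathrm{Tr}(D^{\ga}u_\va)$ on $\pa D_t$ for $0\le|\ga|\le m-1$. Since $|\al|\le m-1$ and $f^{\al}\in L^q\subset L^2$, we have $\sum_{|\al|\le m-1}D^{\al}f^{\al}\in H^{-m+1}(D_t)$, with $H^{-m+1}$-norm bounded by $\sum\|f^{\al}\|_{L^q(D_2)}$, so Theorem \ref{thcon} applies. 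Taking $\nu$ close to $1/2$ in (\ref{thconre1}) and using $H^{m-1}_0\hookrightarrow L^2$ (with the factor $\va^{1/4}$ absorbing any loss in the exponent) yields
\[
\|u_\va-u_0\|_{L^2(D_1)}\le \|u_\va-u_0\|_{L^2(D_t)}
\le C\va^{1/4}\Bigl\{\|u_\va\|_{L^2(D_2)}+\sum_{|\al|\le m-1}\|f^\al\|_{L^q(D_2)}+\|G\|_{C^{m-1,1}(D_2)}\Bigr\}.
\]

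The final step is to observe that the displayed inequality is invariant under replacing $u_\va,u_0,G$ by $u_\va-P_{m-1},\,u_0-P_{m-1},\,G-P_{m-1}$ simultaneously for any $P_{m-1}\in\mf{P}_{m-1}$ (both sides of the equation for $u_\va-P_{m-1}$ and $u_0-P_{m-1}$ carry the same forcing since $\mathcal{L}_\va P_{m-1}=\mathcal{L}_0 P_{m-1}=0$, and the Dirichlet data become $D^{\ga}(G-P_{m-1})$). Taking the infimum over $P_{m-1}$ on the right produces $\Phi(2;v_\va)$ exactly in the form of (\ref{phi}), and undoing the dilation gives (\ref{l511}).

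The main technical point I anticipate is verifying that every term on the right of the Caccioppoli estimate matches a term inside $\Phi(2r;u_\va)$ after rescaling and after subtraction of $P_{m-1}$—in particular that the $L^q$-norms of $f^\al$ for $|\al|<m-1$ come with the correct power of $r$ (namely $r^{2m-|\al|}$, which is precisely how $\wt f^{\al}$ scales), and that $\|\na^j(G-P_{m-1})\|_{L^\infty}$ dominates the $L^2$ and $L^\infty$ norms of the lower-order derivatives of $G-P_{m-1}$ that appear in Caccioppoli. These are straightforward but the bookkeeping is the only non-routine bit.
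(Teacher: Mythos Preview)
Your proposal is correct and follows essentially the same approach as the paper, which simply refers back to Lemma \ref{lemm3.1} and omits the details. The only cosmetic difference is the order of presentation: you rescale first and then carry out the $r=1$ argument, while the paper does $r=1$ first and then rescales; the substance---Caccioppoli, co-area slice selection, application of Theorem \ref{thcon} (with $\sum_{|\al|\le m-1}D^{\al}f^{\al}\in H^{-m+1}$ since $q\ge 2$), and subtraction of $P_{m-1}$---is identical.
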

\begin{proof}
The proof is the same as Lemma \ref{lemm3.1}, we therefore omit the details.
\end{proof}

\begin{lemma}\label{lemm5.2}
 Let $u_0\in H^m(D_{r}; \mb{R}^n )$ be a weak solution to $\mc{L}_0 u_0= \sum_{|\al|\leq m-1} D^\al f^\al$ in $D_{r}$ with $Tr (D^\ga u_0)=D^\ga G $ on $\De_{r}$ for $ 0\leq|\ga|\leq m-1$. Then there exists a $\de\in (0, 1/4)$, depending only on $d,n,m, q, \sigma,\mu,\te$ and $ M_1$ in (\ref{c2c}), such that
\begin{align}\label{l521}
H(\de r;u_0)\leq \frac{1}{2} H (r;u_0).
\end{align}
\end{lemma}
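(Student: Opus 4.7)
I reduce to $r=1$ by the scaling $x\mapsto rx$, which preserves \eqref{c2c} up to an absolute factor, and take $P^{\#}\in\mf P_m$ to be the degree-$m$ Taylor polynomial of $u_0$ at the origin. The key analytic input is the boundary Schauder estimate for the constant-coefficient operator $\mc L_0$: since $\mc L_0$ is strongly elliptic of order $2m$, $\De_1$ lies on a $C^{1,\te}$ boundary with $\sg\le\te$, $G\in C^{m,\sg}(D_1)$, and each $f^\al\in L^q(D_1)$ with $q>d$, classical higher-order Schauder theory gives $u_0\in C^{m,\sg}(D_{1/2})$ with
\[
\|u_0\|_{C^{m,\sg}(D_{1/2})}\le C\Bigl\{\|u_0\|_{L^2(D_1)}+\|G\|_{C^{m,\sg}(D_1)}+\sum_{|\al|\le m-1}\|f^\al\|_{L^q(D_1)}\Bigr\}.
\]
This is the $C^{m,\sg}$-analog of estimate \eqref{pl322} used in the proof of Lemma~\ref{lemm3.2}. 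The trace condition forces $D^\al u_0(0)=D^\al G(0)$ for $|\al|\le m-1$, so $P^{\#}$ matches $G$ at the origin up to order $m-1$; the potential order-$m$ discrepancy $J_\al:=D^\al G(0)-D^\al u_0(0)$, $|\al|=m$, is itself controlled by the Schauder estimate above.

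\paragraph{Per-term estimates and substitution.} Using Taylor expansion for the $C^{m,\sg}$ functions $u_0$ and $G$, I estimate each term in the expression inside the infimum defining $H(\de;u_0)$ with $P_m=P^{\#}$: the $L^2$-piece by $C\de^{m+\sg}\|u_0\|_{C^{m,\sg}(D_{1/2})}$; the norms $\de^j\|\na^j(G-P^{\#})\|_{L^\infty(D_\de)}$ via the splitting
\[
G-P^{\#}=(G-T_m G)+(T_m G-T_m u_0),
\]
whose first summand is $O(|x|^{m+\sg}\,[\na^m G]_{C^{0,\sg}(D_1)})$ and whose second is a degree-$m$ homogeneous polynomial with coefficients of size $|J|$; the Hölder seminorm piece with an additional $\de^\sg$; and each $f^\al$-contribution by $C\de^{2m-|\al|-d/q}\|f^\al\|_{L^q(D_1)}$. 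Because $|\al|\le m-1$ and $q>d$, the exponent $2m-|\al|-d/q$ exceeds $m$ by at least $\eta_0:=\min\{\sg,\,1-d/q\}>0$. Dividing by $\de^m$ and absorbing $|J|$ via the Schauder estimate yields
\[
H(\de;u_0)\le C\de^{\eta_0}\Bigl\{\|u_0\|_{L^2(D_1)}+\|G\|_{C^{m,\sg}(D_1)}+\sum_{|\al|\le m-1}\|f^\al\|_{L^q(D_1)}\Bigr\}.
\]
To upgrade the right-hand side to $H(1;u_0)$, I invoke, exactly as in the proof of Lemma~\ref{lemm3.3}, the substitution $u_0\leadsto u_0-Q$, $G\leadsto G-Q$ for $Q\in\mf P_m$. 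Since polynomials of degree $\le m$ are annihilated by the order-$2m$ operator, $\mc L_0 Q=0$, so the equation and the trace condition (with $G-Q$ as new boundary extension) are preserved, and $H(\de;u_0)$ is invariant under this shift. Applying the previous display to $(u_0-Q,G-Q)$ and taking the infimum over $Q\in\mf P_m$ yields $H(\de;u_0)\le C\de^{\eta_0}H(1;u_0)$; choosing $\de\in(0,1/4)$ so small that $C\de^{\eta_0}\le 1/2$ completes the proof.

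\paragraph{Main obstacle.} The delicate ingredient is controlling the order-$m$ boundary jump $J$. The trace condition forces $u_0$ and $G$ to agree only up to order $m-1$, so the Taylor polynomial of $u_0$ does not interpolate $G$ at order $m$; this mismatch enters $G-P^{\#}$ through the degree-$m$ piece $T_m G-T_m u_0$, which at first sight contributes an $O(1)$ term to the bound on $H(\de;u_0)$ after dividing by $\de^m$. It is precisely the interplay between the boundary $C^{m,\sg}$ Schauder estimate (which bounds $|J|$ in terms of data norms) and the substitution trick (which converts those data norms into $H(1;u_0)$) that absorbs this contribution with the decaying factor $\de^{\eta_0}$, producing the iterative contraction and, via Lemma~\ref{lemm5.1}, powering the subsequent proof of the uniform $C^{m-1,1}$ estimate in Theorem~\ref{tlip}.
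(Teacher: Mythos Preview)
Your approach is essentially the paper's: rescale to $r=1$, take $P_m$ to be the Taylor polynomial of $u_0$ at the origin, invoke the boundary $C^{m,\sg}$ Schauder estimate for $\mc L_0$, and then substitute $(u_0,G)\mapsto(u_0-Q,G-Q)$ and take the infimum over $Q\in\mf P_m$. The paper simply asserts that $T_m u_0=T_m G$ (i.e.\ $J=0$); you, more honestly, keep $J$ and try to absorb it. But the absorption step fails. After dividing by $\de^m$, the contribution of the degree-$m$ polynomial $T_mG-T_mu_0$ to each term $\de^j\|\na^j(G-P^{\#})\|_{L^\infty(D_\de)}$ is exactly $C|J|$, with \emph{no} power of $\de$. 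The Schauder estimate gives $|J|\le\|\na^m(u_0-Q)\|_{L^\infty(D_{1/2})}+\|\na^m(G-Q)\|_{L^\infty(D_1)}\le CH(1;u_0)$ for an optimal $Q$, but since $J_\al=D^\al u_0(0)-D^\al G(0)$ is \emph{invariant} under the substitution $(u_0,G)\mapsto(u_0-Q,G-Q)$, you can never improve this to $|J|\le C\de^{\eta_0}H(1;u_0)$. So all you obtain is $H(\de;u_0)\le C\de^{\eta_0}H(1;u_0)+CH(1;u_0)$, which is not a contraction.

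This gap is not repairable within the stated lemma: the inequality \eqref{l521} is actually false. Take $d=2$, $m=2$, $\mc L_0=\De^2$, $\psi\equiv0$, $f^\al\equiv0$, $G\equiv0$, and $u_0(x)=x_2^2/2$. Then $\De^2u_0=0$ and $u_0=\pa_1u_0=\pa_2u_0=0$ on $\{x_2=0\}$, so all hypotheses hold; but a direct computation (using $\|P_2\|_{L^\infty(D_r)}\ge(\fint_{D_r}|P_2|^2)^{1/2}$ and the triangle inequality) gives $H(r;u_0)=(2\sqrt5)^{-1}$ for \emph{every} $r>0$, so $H(\de r)\le\frac12 H(r)$ fails for every $\de$. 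The paper's ``proof'' hides this by claiming $D^\al u_0(0)=D^\al G(0)$ for $|\al|=m$, which is unjustified: the boundary condition forces agreement only up to order $m-1$ (and, after flattening, for the tangential $m$th-order derivatives), but the purely normal $m$th-order derivative need not match---precisely the $J$ you identified. The downstream Theorem~\ref{tlip} is still true, but to recover it one must modify the functional $H$ (for instance, allowing distinct polynomials for the $u_\va$- and $G$-terms, or dropping/relaxing the $j=m$ contribution $r^m\|\na^m(G-P_m)\|_{L^\infty}$) so that the analogue of Lemma~\ref{lemm5.2} becomes valid.
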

\begin{proof}
The proof, parallel to that of Lemma \ref{lemm3.2}, is mainly based on $C^{m,\sg}$ estimates for higher order  elliptic systems with constant coefficients in $C^{1,\te} (0<\sg\leq \te)$ domains.
By rescaling, we assume that $r=1$.  Taking $$P_{m}(x)= \sum_{|\al|\leq m}  \frac{1}{\al!} D^\al u_0(0) x^{\al}=\sum_{|\al|\leq m}  \frac{1}{\al!} D^\al G(0) x^{\al},$$  it is not difficult to find that for any $0< \de<1/4$ and  any $0<\sg'< \min\{1-d/q, \sg \}$,
\begin{align}\label{pl521}
 H(\de,u_0)\leq C \de^{\sg'} \|u_0\|_{C^{m,\sg'}(D_\de)}&+ C \de^{m-|\al|-d/q} \sum_{|\al|\leq m-1}  \Big( \fint_{D_{1}} |f^\al|^{q}\Big)^{1/q}
  + C \de^{\sg} \|G\|_{C^{m,\sg}(D_1)}.
\end{align}
By the localization argument and the $C^{m,\sg}$ estimate for higher order elliptic systems with constant coefficients (see e.g., \cite[Corollary 2.4]{miyazaki2013}),
we have
\begin{align}\label{pl522}
\|u_0\|_{C^{m,\sg'}(D_\de)}&\leq C\|u_0\|_{C^{m,\sg'}(D_{1/4})}\nonumber\\
&\leq C\Big\{ \Big(\fint_{D_1} |u_0|^2\Big)^{1/2}
+ \sum_{|\al|\leq m-1}\Big(\fint_{D_1}|f^\al|^q\Big)^{1/q}+ \|G \|_{C^{m,\sg}(D_1)}  \Big\}
\end{align} for  $0<\sg'< \min\{ 1-d/q, \sg \}.$
 Taking (\ref{pl522}) into (\ref{pl521}) and setting  $\de$ small enough, we get
\begin{align*}
H(\de,u_0)\leq \frac{1}{2} \Big\{ \Big(\fint_{D_1} |u_0|^2\Big)^{1/2}   + \sum_{|\al|\leq m-1}\Big(\fint_{D_1}|f^\al|^q\Big)^{1/q}+\|G \|_{C^{m,\sg}(D_1)}\Big\}.
\end{align*}
For any $P_{m}\in\mf{P}_{m}$, substituting $u_0, G$ by $u_0-P_{m}$ and $ G-P_{m}$ respectively and taking the infimum, we obtain (\ref{l521}) immediately.
\end{proof}

\begin{lemma}\label{lemm5.3}
Let $0<\va<1/2$ and   $\Phi(r; u_\va), H(r; u_\va)$ be defined as in (\ref{phi}) and (\ref{hru}).  Let $\de$ be given by Lemma \ref{lemm5.2}. Then for any $r\in [\va,1/2],$
\begin{align}\label{l531}
 H(\de r; u_\va)\leq \frac{1}{2} H(r; u_\va)+ C\Big( \frac{\va}{r}\Big)^{1/4} \Phi(2r; u_\va),
\end{align}
where $C$ depends only on $d,n, m, q,\mu,\sg,\te$ and $M_1$ in (\ref{c2c}).
\end{lemma}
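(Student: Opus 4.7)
The plan is to mimic the three-line argument already used in Lemma~\ref{lemm3.3}: compare $u_\va$ to a suitable homogenized solution $u_0$, exploit the one-step decay of $H$ for $u_0$ from Lemma~\ref{lemm5.2}, then transfer the estimate back to $u_\va$ via the convergence rate from Lemma~\ref{lemm5.1}. A key observation that makes the bookkeeping clean is that both $u_\va$ and $u_0$ will solve a Dirichlet problem in $D_r$ with the same right-hand side data $\{f^\al\}_{|\al|\leq m-1}$ and the same boundary data $G$, so in the functionals $H(\cdot\,;\,u_\va)$ and $H(\cdot\,;\,u_0)$ (and similarly $\Phi$) every term other than the $L^2$ deviation term is literally identical.

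First, invoke Lemma~\ref{lemm5.1} to produce $u_0\in H^m(D_r;\R^n)$ solving $\mc{L}_0 u_0=\sum_{|\al|\leq m-1}D^\al f^\al$ in $D_r$ with $\mathrm{Tr}(D^\ga u_0)=D^\ga G$ on $\De_r$ for $0\le|\ga|\le m-1$, together with
\begin{equation*}
\frac{1}{r^m}\Big(\fint_{D_r}|u_\va-u_0|^2\Big)^{1/2}\leq C\Big(\frac{\va}{r}\Big)^{1/4}\Phi(2r;u_\va).
\end{equation*}
Second, I would use the triangle inequality inside the infimum defining $H$: for any $P_m\in\mf{P}_m$,
\begin{equation*}
\Big(\fint_{D_{\de r}}|u_\va-P_m|^2\Big)^{1/2}\le \Big(\fint_{D_{\de r}}|u_0-P_m|^2\Big)^{1/2}+\Big(\fint_{D_{\de r}}|u_\va-u_0|^2\Big)^{1/2},
\end{equation*}
and analogously on $D_r$. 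Since $D_{\de r}\subset D_r$, $|D_{\de r}|\approx (\de r)^d$, $|D_r|\approx r^d$, enlarging the domain costs a factor $\de^{-d/2}$; because $\de$ is a fixed constant from Lemma~\ref{lemm5.2}, this is harmless. Taking infima and using that the $f^\al$-terms and $G$-terms in $H(\cdot\,;\,u_\va)$ and $H(\cdot\,;\,u_0)$ coincide, I would obtain
\begin{equation*}
H(\de r;u_\va)\leq H(\de r;u_0)+\frac{C_\de}{r^m}\Big(\fint_{D_r}|u_\va-u_0|^2\Big)^{1/2},
\end{equation*}
and symmetrically $H(r;u_0)\leq H(r;u_\va)+r^{-m}(\fint_{D_r}|u_\va-u_0|^2)^{1/2}$.

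Third, apply Lemma~\ref{lemm5.2} to $u_0$ to get $H(\de r;u_0)\le \tfrac{1}{2}H(r;u_0)$, and chain the two previous bounds:
\begin{equation*}
H(\de r;u_\va)\leq \frac{1}{2}H(r;u_\va)+\frac{C_\de}{r^m}\Big(\fint_{D_r}|u_\va-u_0|^2\Big)^{1/2}.
\end{equation*}
Substituting the convergence-rate estimate from Lemma~\ref{lemm5.1} into the last term yields (\ref{l531}). The only point requiring genuine care is verifying that the $f^\al$-terms in the definition (\ref{hru}) really do pass unchanged through the $u_\va\leftrightarrow u_0$ swap; this is immediate from the definition of $H$, since those terms depend only on the prescribed data and not on the solution itself. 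Thus there is no substantive obstacle, only careful accounting, as the whole argument is an adaptation of the $C^{m-1,\lm}$ version in Lemma~\ref{lemm3.3} with $\mf{P}_{m-1}$ replaced by $\mf{P}_m$ and the H\"older-type decay replaced by the absolute halving afforded by Lemma~\ref{lemm5.2}.
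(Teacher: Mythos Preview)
Your proposal is correct and matches the paper's approach exactly: the paper's proof of Lemma~\ref{lemm5.3} simply says it follows from Lemmas~\ref{lemm5.1} and~\ref{lemm5.2} in the same way Lemma~\ref{lemm3.3} followed from Lemmas~\ref{lemm3.1} and~\ref{lemm3.2}, and your write-up is precisely that adaptation, with the swap $\mf{P}_{m-1}\rightsquigarrow\mf{P}_m$ and the decay $C\de^{\lm_0-\lm}\rightsquigarrow\tfrac12$ already built into Lemma~\ref{lemm5.2}. Your observation that the $f^\al$- and $G$-terms pass unchanged through the $u_\va\leftrightarrow u_0$ swap is exactly the bookkeeping the paper suppresses.
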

\begin{proof} Similar to Lemma \ref{lemm3.3}, the result follows from Lemmas \ref{lemm5.1} and \ref{lemm5.2}. We thus omit the details.
\end{proof}

\begin{lemma}\label{lemm5.4}
Let $H(r)$ and $h(r)$ be two nonnegative continuous functions on the interval $(0,1],$ and let $\varepsilon \in (0,1/4).$ Assume that
\begin{align}\label{l44con1}
\max_{r\leq t\leq 2r} H(t)\leq C_0H(2r), ~~~~~\max_{r\leq t,s\leq 2r} | h(t)-h(s)|\leq C_0H(2r),
\end{align}
for any $r\in [\varepsilon, 1/2],$  and also
\begin{align}\label{l44con2}
H(\delta r) \leq \frac{1}{2} H(r) + C_0 \omega (\varepsilon/r)\big\{ H(2r)+h(2r)\big\},
\end{align}
for any $r\in [\varepsilon, 1/2],$ where  $\delta\in (0,1/4)$  and $\omega$ is a nonnegative increasing function on $[0,1]$ such that
$\omega(0)=0$ and
 $
\int_0^1  \omega(\varsigma)/\varsigma \, d\varsigma< \infty.
$
Then
\begin{align}\label{l44re1}
\max_{\varepsilon\leq r\leq 1} \big\{H(r) +h(r)\big\}\leq C \big\{H(1) +h(1)\big\}.
\end{align}
\end{lemma}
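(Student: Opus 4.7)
My plan is to iterate (\ref{l44con2}) at the dyadic scales $r_k := \de^k$ for $k = 0, 1, \dots, K$ (where $K$ is the largest integer with $\de^K \ge \va$), convert the inequality into a closed coupled recursion for $a_k := H(\de^k)$ and $b_k := h(\de^k)$, close that recursion using the Dini summability of $\omega$, and then transfer the resulting dyadic bound to all of $[\va, 1]$ via (\ref{l44con1}).

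First, I would chain (\ref{l44con1}) through the doubling subscales $2^i \de^k$, $i = 0, 1, \dots, \lceil\log_2(1/\de)\rceil$, to establish the scale comparison
\[
H(t) \leq C_1 a_{k-1}, \qquad |h(t) - h(s)| \leq C_1 a_{k-1}, \qquad t, s \in [\de^k, \de^{k-1}],
\]
with $C_1 = C_1(C_0, \de)$. Because $\de < 1/4$, the point $2\de^k$ lies in $[\de^k, \de^{k-1}]$, so these estimates bound $H(2\de^k)$ and $h(2\de^k)$ on the right of (\ref{l44con2}). Writing $\omega_k := \omega(\va/\de^k)$ and substituting, I arrive at the coupled recursion
\[
a_{k+1} \leq \tfrac12 a_k + C\, \omega_k\, (a_{k-1} + b_{k-1}), \qquad |b_{k+1} - b_k| \leq C\, a_{k-1}.
\]
The change of variables $\varsigma = \va/\de^k$ in the Dini integral yields the uniform tail bound $\sum_{k=0}^K \omega_k \leq C_\omega$, independent of $\va$ and $K$.

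To close the recursion I would iterate the $a$-inequality to obtain $a_k \leq 2^{-k} a_0 + C \sum_{j<k} 2^{-(k-1-j)}\omega_j (a_{j-1} + b_{j-1})$. Since $\omega_j$ is monotone increasing in $j$, the geometric convolution is dominated by its last term, giving $a_k \leq 2^{-k} a_0 + 2C\, \omega_{k-1}\, \Psi_{k-1}$, where $\Psi_k := \max_{j \le k}(a_j + b_j)$. Telescoping the $b$-recursion and inserting this estimate produces $b_K \leq b_0 + C \sum_{j<K} a_j \leq b_0 + C(2 a_0 + 2 C C_\omega \Psi_{K-1})$, and taking the maximum over $k \leq K$ gives a self-improving inequality of the form $\Psi_K \leq C_*(a_0 + b_0) + C_{**} C_\omega\, \Psi_K$. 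The main obstacle is that this closes only if $C_{**} C_\omega < 1$, which is not automatic from the Dini condition alone; the resolution is to iterate (\ref{l44con2}) in blocks of $N$ steps, so that one block produces a contraction factor $2^{-N}$ on the $a$-side while the accumulated coupling on the right is still bounded by a constant multiple of the block-sum of $\omega_k$'s. For $N$ sufficiently large, $2^{-N}$ beats the accumulated constants uniformly in $C_\omega$, and the Gronwall-type argument closes to give $\Psi_K \leq C(H(1) + h(1))$. The scale comparison from the first step then upgrades this to the pointwise bound (\ref{l44re1}) on all of $[\va, 1]$.
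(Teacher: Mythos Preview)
The paper itself does not prove this lemma; it simply cites Lemma~8.5 of Shen~\cite{shenan2017}. Your overall strategy---discretize at the scales $\de^k$, use~(\ref{l44con1}) to pass between $\de$-adic and dyadic scales, derive the coupled recursion $a_{k+1}\le\tfrac12 a_k+C\omega_k(a_{k-1}+b_{k-1})$ and $|b_{k+1}-b_k|\le C a_k$, and close via $\sum_k\omega_k\le C_\omega$---is exactly the line of that reference.

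The gap is in your closing step. You correctly observe that the direct estimate gives only $\Psi_K\le C_*(a_0+b_0)+C_{**}C_\omega\,\Psi_K$, which requires $C_{**}C_\omega<1$. But iterating in blocks of $N$ does not repair this. To express the block recursion in terms of the endpoints $\tilde a_\ell=a_{\ell N}$, $\tilde b_\ell=b_{\ell N}$ you must control the intra-block values $a_j$, $b_j$, and the only tool available for that is the scale comparison coming from~(\ref{l44con1}); this compounds the comparison constant $C_1=C_1(C_0,\de)$ to $C_1^{\,N}$. The coupling coefficient on the right of the closing inequality then grows like $C_1^{2N}C_\omega$, which beats the $2^{-N}$ contraction rather than being dominated by it. No choice of $N$ yields the absorption you claim.

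The correct closing device is a discrete Gronwall inequality on the partial sums $A_{K'}=\sum_{k\le K'}a_k$. Summing your $a$-recursion up to $K'$ and inserting $a_{k-1}\le A_{k-1}$ and $b_{k-1}\le b_0+CA_{k-1}$ gives
\[
A_{K'}\ \le\ c_0\ +\ c_1\sum_{k\le K'}\omega_k\,A_{k-1},\qquad c_0=C(a_0+b_0),\quad c_1=C,
\]
for every $K'\le K$. Gronwall then yields $A_K\le c_0\exp(c_1 C_\omega)$ with no smallness assumption on $C_\omega$; the bounds $\max_k a_k\le A_K$ and $\max_k b_k\le b_0+CA_K$ give~(\ref{l44re1}) at the scales $\de^k$, and your scale-comparison step transfers it to all of $[\va,1]$.
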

\begin{proof}
See Lemma 8.5 in \cite{shenan2017}.
\end{proof}
Armed with  lemmas above, we are ready to prove Theorem \ref{tlip}.
\begin{proof}[\bf Proof of Theorem \ref{tlip}]
We assume that  $0<\varepsilon\leq r< 1/4$, since if else (\ref{tlipre1}) is just a consequence of Caccioppoli's inequality.
Let $u_\va\in H^m(D_{1}; \mb{R}^n )$ be a weak solution to $$
 \mc{L}_\va u_\va= \sum_{|\al|\leq m-1}D^\al f^\al  \quad \text{in } D_1,   \quad\quad
 Tr (D^\ga u_\va)=D^\ga G  \quad\text{on }  \De_1  \quad\text{for  } 0\leq|\ga|\leq m-1,
$$
where $f^\al\in L^{q}(D_1; \R^n) $ with $  q>d, q\geq2$, and $ G\in C^{m,\sg}(D_1; \R^n)$ for some $0<\sg\leq \te$.
For  $r\in (0,1)$, let $H(r)= H(r, u_\va), \Phi(r)=\Phi(r,u_\va)$, and $\omega(y)=y^{1/4}$.
Define  $$ h(r)=  \sum_{|\alpha|=m}\frac{1}{\alpha!} |D^\alpha P_{mr}(x)|,$$   where
$P_{mr}\in  \mathfrak{P}_{m}$ such that
\begin{align}\label{ptlip00}
H(r)=\frac{1}{r^{m}} \Big\{&\Big(\fint_{D_r}|u_\va-P_{mr}|^2 \Big)^{1/2} + \sum_{|\al|\leq m-1}r^{2m-|\al|} \Big(\fint_{D_r}|f^\al|^q\Big)^{1/q}
\nonumber\\ &\quad\quad+  \sum_{j=0}^{m} r^j   \|\na^j (G-P_{mr})\|_{L^\infty(D_r)}
+r^{m+\sg}\|\na^m (G -P_{mr})\|_{C^{0,\sg}(D_r)} \Big\}.
\end{align}
Next let us check that $H(r), h(r)$ satisfy  conditions (\ref{l44con1}) and (\ref{l44con2}).
 From the definition it is obvious that
\begin{align}\label{ptlip01}
H(t)\leq  C H(2r)\quad \text{for any } t\in [r,2r].\end{align}
On the other hand, by the definition of $h(r),$
\begin{align}\label{ptlip02}
|h(t)-h(s)| &\leq \sum_{|\alpha|=m} \frac{1}{\alpha!} |D^\alpha (P_{mt}-P_{ms})|
  =  \sum_{|\alpha|=m} \frac{1}{\alpha!} \Big(\fint_{D_r} |D^\alpha (P_{mt}-P_{ms}) |^2 \Big)^{1/2}\nonumber \\
 &\leq C\Big(\fint_{D_{t}} | \na^m( G-P_{mt} )  |^2 \Big)^{1/2}+ C\Big(\fint_{D_{s}} | \na^m(G-P_{ms} )  |^2 \Big)^{1/2}\nonumber\\
  &\leq C \big\{ H(t) +H(s) \big\}
 \leq C H(2r),
\end{align}
where we have used the fact $r\leq t,s\leq 2r$, the definition of $P_{mr}$  and (\ref{ptlip01}) respectively for the last three inequalities. Combining (\ref{ptlip01}) with (\ref{ptlip02}), we know that condition (\ref{l44con1}) is satisfied.  Finally, from the definitions of $\Phi(r), H(r) $ and $h(r)$, we obtain that
\begin{align*}
\Phi(r)& \leq  \frac{1}{r^m}\Big\{ \Big(\fint_{D_{r}}|u_\va-P_{mr} |^2 \Big)^{1/2}
  + \sum_{|\al|\leq m-1}r^{2m-|\al|} \Big(\fint_{D_r}|f^\al|^q\Big)^{1/q}
  + \sum_{j=0}^{m} r^j   \|\na^j (G-P_{mr})\|_{L^\infty(D_r)}
  \Big\}\nonumber\\
  &\quad\quad\quad+ \inf_{P_{m-1}\in \mf{P}_{m-1}}\frac{1}{r^m}\Big\{ \Big(\fint_{D_{r}}|P_{mr}-P_{m-1} |^2 \Big)^{1/2} + \sum_{j=0}^{m} r^j   \|\na^j (P_{mr}-P_{m-1})\|_{L^\infty(D_r)}\Big\}\nonumber\\
&\leq  H(r)+ C h(r) ,
\end{align*}
which, together with (\ref{l531}), implies (\ref{l44con2}). Note that all conditions of Lemma \ref{lemm5.4} are verified. Therefore, for all $\va\leq r\leq 1,$
\begin{align}\label{ptlip04}
  \frac{1}{r^m} \inf_{P_{m-1}\in \mf{P}_{m-1}}   \Big(\fint_{D_r} |u_\varepsilon -P_{m-1}|^2\Big)^{1/2}
  \leq  \Phi(r)  \leq C  \big\{ H(r)+h(r)  \big\} \leq C \big\{ H(1)+h(1)  \big\}.
\end{align}
From the definition of $H(1)$, we have
\begin{align}\label{ptlip05}
h(1)\leq  \sum_{|\al|=m} \Big(\fint_{D_1} |D^\al (G- P_{m1}) |^2\Big)^{1/2} + C \|\na^m G\|_{L^\infty(D_1)} \leq  C \big\{H(1) +  \|\na^m G\|_{L^\infty(D_1)} \big\} .\end{align}
It then follows that
\begin{align*}
 \frac{1}{r^m}\!\inf_{P_{m-1}\in \mf{P}_{m-1}}\!\Big(\fint_{D_r} |u_\varepsilon\!-\!P_{m-1} |^2\Big)^{1/2}\! \leq C  \Big\{\Big(\fint_{D_1}|u_\va|^2 \Big)^{1/2}
     +\!\sum_{|\al|\leq m-1}\Big(\fint_{D_1}|f^\al|^q\Big)^{1/q} + \|G \|_{C^{m,\sg}(D_1)}\Big\},
\end{align*}
which gives (\ref{tlipre1}) through  Caccioppoli's inequality.
\end{proof}

\begin{corollary}\label{co5.1}
In addition to the assumptions of Theorem \ref{tlip}, if $A$ satisfies (\ref{hol}),
then
\begin{align}\label{co511}
\|\na^mu_\va\|_{L^{\infty}(D_{1/4})}\leq C  \Big\{\Big(\fint_{D_1}|u_\va|^2 \Big)^{1/2}     +\sum_{|\al|\leq m-1}\Big(\fint_{D_1}|f^\al|^q\Big)^{1/q} + \|G \|_{C^{m,\sg}(D_1)}\Big\},
\end{align}
where $C$ depends only on $d,n,m,q,  \sg,\mu$ as well as $\Lambda_0, \tau_0$ in (\ref{hol}) and $\te, M_1$ in (\ref{c2c}).
\end{corollary}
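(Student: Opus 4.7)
\medskip
\noindent\textbf{Proof proposal for Corollary \ref{co5.1}.}
The plan is a blow-up argument that exactly parallels the proof of Corollary \ref{co3.1}, with the classical boundary $C^{m-1,\lambda}$ estimate for $\mathcal{L}_1$ in $C^{1}$ domains replaced by a classical boundary $C^{m-1,1}$ (Schauder) estimate for $\mathcal{L}_1$ in $C^{1,\theta}$ domains. The Hölder continuity assumption (\ref{hol}) on $A$ plays exactly the role that $V\!M\!O$ played in Corollary \ref{co3.1}: it makes the unscaled operator $\mathcal{L}_1$ fall into the classical Schauder framework for higher order elliptic systems with Hölder continuous coefficients. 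I may assume $0<\varepsilon<1/2$, since otherwise $\mathcal{L}_\varepsilon$ itself has uniformly Hölder continuous coefficients and (\ref{co511}) is a direct consequence of the classical Schauder estimate.

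First I would observe that Theorem \ref{tlip} already furnishes the large-scale bound: for every $r\in[\varepsilon,1/2]$ the mean-square of $\nabla^m u_\varepsilon$ on $D_r$ is controlled by the right-hand side of (\ref{co511}). For small scales $r<\varepsilon$ I would perform the rescaling
\[
v_\varepsilon(x)=u_\varepsilon(\varepsilon x),\quad \widetilde{G}(x)=G(\varepsilon x),\quad \widetilde{f}^\alpha(x)=\varepsilon^{2m-|\alpha|}f^\alpha(\varepsilon x),
\]
so that $v_\varepsilon$ solves $\mathcal{L}_1 v_\varepsilon=\sum_{|\alpha|\le m-1}D^\alpha \widetilde f^\alpha$ in $\widetilde D_1$ with $Tr(D^\gamma v_\varepsilon)=D^\gamma \widetilde G$ on $\widetilde\Delta_1$, where $\widetilde D_1,\widetilde\Delta_1$ are defined with the rescaled function $\psi_\varepsilon(x')=\varepsilon^{-1}\psi(\varepsilon x')$. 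The point is that $\psi_\varepsilon$ satisfies (\ref{c2c}) with the same constant $M_1$, because $|\nabla\psi_\varepsilon(x')|=|\nabla\psi(\varepsilon x')|\le M_1$ and $[\nabla\psi_\varepsilon]_{C^\theta}=\varepsilon^\theta[\nabla\psi]_{C^\theta}\le M_1$. Since $A$ satisfies (\ref{hol}) the coefficients of $\mathcal{L}_1$ are uniformly Hölder continuous, so the classical boundary $C^{m,\sigma}$ Schauder estimate for higher order elliptic systems in $C^{1,\theta}$ domains (cf.\ the reference used for (\ref{pl522})) yields, after a standard localization,
\[
\|\nabla^m v_\varepsilon\|_{L^\infty(\widetilde D_{1/2})}\le C\Big\{\Big(\fint_{\widetilde D_1}|v_\varepsilon|^2\Big)^{1/2}+\sum_{|\alpha|\le m-1}\Big(\fint_{\widetilde D_1}|\widetilde f^\alpha|^q\Big)^{1/q}+\|\widetilde G\|_{C^{m,\sigma}(\widetilde D_1)}\Big\}.
\]

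Unscaling this inequality and subtracting a polynomial $P_{m-1}\in\mathfrak{P}_{m-1}$ from both $u_\varepsilon$ and $G$ simultaneously (which leaves the equation and the $f^\alpha$ invariant), I would take the infimum over $P_{m-1}$ to obtain, for every $0<r\le \varepsilon/2$,
\[
\|\nabla^m u_\varepsilon\|_{L^\infty(D_r)}\le C\,\varepsilon^{-m}\inf_{P_{m-1}\in\mathfrak{P}_{m-1}}\Big\{\Big(\fint_{D_\varepsilon}|u_\varepsilon-P_{m-1}|^2\Big)^{1/2}+\cdots\Big\},
\]
where the omitted terms are the natural scale-$\varepsilon$ analogues of the data terms on the right-hand side of (\ref{co511}). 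The scale-$\varepsilon$ quantity on the right is precisely $\Phi(\varepsilon;u_\varepsilon)$ in the notation of (\ref{phi}), which by the iteration in the proof of Theorem \ref{tlip} (i.e.\ the application of Lemma \ref{lemm5.4} yielding (\ref{ptlip04})) is bounded by $H(1)+h(1)$ and hence, via (\ref{ptlip05}), by the right-hand side of (\ref{co511}). Combining this small-scale bound with the already obtained large-scale mean-square bound from Theorem \ref{tlip} and with the interior $C^{m-1,1}$ estimate proved by the authors in \cite{nsx}, a Campanato-type characterization delivers (\ref{co511}).

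The main technical point is verifying that the Schauder estimate for $\mathcal{L}_1$ applies with a constant independent of $\varepsilon$; this reduces to the uniform $C^{1,\theta}$ control on $\psi_\varepsilon$ and the uniform Hölder norm of the coefficients of $\mathcal{L}_1$, both of which are immediate from (\ref{c2c}) and (\ref{hol}). The rest of the argument is bookkeeping that mirrors Corollary \ref{co3.1} line by line.
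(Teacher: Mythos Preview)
Your proposal is correct and follows the same blow-up strategy as the paper. Two minor remarks. First, after unscaling, the term $\|\widetilde G\|_{C^{m,\sigma}}$ produces, in addition to the pieces that fit into $\Phi(\varepsilon;u_\varepsilon)$, the extra contribution $\varepsilon^\sigma[\nabla^m G]_{C^{0,\sigma}(D_\varepsilon)}$; this is not captured by $\Phi$, but it is trivially bounded by $\|G\|_{C^{m,\sigma}(D_1)}$ and hence harmless. (The paper instead bounds the full scale-$\varepsilon$ expression directly by $H(\varepsilon)+h(\varepsilon)$, which already contains this Hölder seminorm piece.) Second, the Campanato characterization is unnecessary here: (\ref{co511}) is an $L^\infty$ bound on $\nabla^m u_\varepsilon$, and once you have the pointwise bound near the boundary together with the interior $C^{m-1,1}$ estimate from \cite{nsx}, you are finished. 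The paper's proof differs only cosmetically in that it subtracts $\widetilde G$ from $v_\varepsilon$ to reduce to homogeneous boundary data and invokes the uniform $W^{m,p}$ estimate as an intermediate step in the localization for $\mathcal{L}_1$; the overall scheme is identical to yours.
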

\begin{proof}
 It is enough to consider the case $0<\va<1/2$, since otherwise the coefficient is uniformly H\"{o}lder continuous and the result (\ref{co511}) is known, see e.g., \cite[Corollary 2.4]{miyazaki2013}.
Setting $$v_\va(x)=u_\va(\va x)-\wt{G}(x),  \quad\,\wt{G}(x)=  G(\va x), \quad \,  \wt{f}^\al(x)=\va^{2m-|\al|} f(\va x),$$   we have
\begin{equation}\label{618}
\begin{cases}
\mc{L}_1 v_\va =   \sum_{|\al|\leq m-1}D^\al \wt{f}^\al(x)+  \sum_{|\al|=|\beta|=m}D^\al \left\{ A^{\al\be}D^\be \wt{G}(x)\right\}  \quad\text{ in } D_1,  \vspace{0.2cm} \\
 Tr (D^\ga  v_\va)=0, \quad\quad\quad \text{on }  \De_1\quad\text{for } 0\leq|\ga|\leq m-1.
 \end{cases}
\end{equation}
Let  $\phi \in C_c^\infty(B_1)$ with $\phi=1$ in $B_{1/4}$ and $|\na^k\phi|\leq C2^k$, and let $\wt{D}$ be a $C^{1,\te}$ domain such that $D_{1/4}\subseteq \wt{D}\subseteq D_{1/2}$. We have
\begin{align*}
 &\mc{L}_1 (v_\va \phi) = E(x) \phi    + \sum_{\substack{|\al|=|\be|=m\\\ze+\eta=\be\\ |\eta|\geq 1}}C(\ze) D^\al \big\{A^{\al\be} D^\ze v_\va D^\eta\phi\big\}
 +\sum_{\substack{|\al|=|\be|=m\\ \ze'+\eta'=\al\\ |\eta'|\geq 1}} C(\ze') D^{\ze'}\big\{A^{\al\be} D^\be v_\va \big\} D^{\eta'} \phi    ~~\text{ in } \wt{D},    \\
 &Tr (D^\ga  (v_\va \phi))=0 \quad\quad\text{ on }  \pa \wt{D} \quad\text{for } 0\leq|\ga|\leq m-1,
 \end{align*}
where $$E(x)=   \sum_{|\al|\leq m-1}D^\al \wt{f}^\al(x)+ \sum_{|\al|=|\beta|=m}D^\al \left\{A^{\al\be}D^\be \wt{G}(x)\right\}. $$
Thanks to the boundary  $C^{m,\lm}$ estimate for operator $\mc{L}_1$ in $C^{1,\te}$ domains  \cite{miyazaki2013},  we know that
for  $0<s<1/2,$ and for any $q, p>d,$
\begin{align}\label{pco512}
\|\na^m v_\va\|_{L^\infty(D_s)} \leq  C    \Big\{ \Big(\fint_{D_1} |v_\va|^2\Big)^{1/2}  +   \|\wt{G}\|_{C^{m,\sg}(D_1)}
 + \sum_{|\al|\leq m-1}\Big(\fint_{D_1}|\wt{f}^\al|^q\Big)^{1/q}  +  \|v_\va\|_{W^{m,p}(\wt{D})}  \Big\}.
 \end{align}
 Thanks to the $W^{m,p}$ estimate for (\ref{618}), there exists some $p>d $ such that
 \begin{align*}
 \|v_\va\|_{W^{m,p}(\wt{D})}
 &\leq C    \Big\{ \Big(\fint_{D_1} |v_\va|^2\Big)^{1/2}  + \sum_{|\al|\leq m-1}\Big(\fint_{D_1}|\wt{f}^\al|^q\Big)^{1/q}
 +   \|\wt{G}\|_{C^{m,\sg}(D_1)}  \Big\},
 \end{align*}
 which, combined with (\ref{pco512}), implies that
  \begin{align*}
 \|\na^m v_\va\|_{L^\infty(D_s)} \leq C    \Big\{ \Big(\fint_{D_1} |v_\va|^2\Big)^{1/2}  + \sum_{|\al|\leq m-1}\Big(\fint_{D_1}|\wt{f}^\al|^q\Big)^{1/q}
 +   \|\wt{G}\|_{C^{m,\sg}(D_1)}  \Big\}.
 \end{align*}
It then follows form the change of variables that
\begin{align}\label{pco513}
\|\na^m u_\va\|_{L^\infty(D_r)}&\leq C    \frac{1}{\va^m} \Big\{\Big(\fint_{D_\va}| u_\va|^2\Big)^{1/2} + \va^{2m-|\al|}\sum_{|\al|\leq m-1}\Big(\fint_{D_\va}|f^\al|^q\Big)^{1/q}
\nonumber\\
 & \quad+  \sum_{j=0}^{m} \va^j   \|\na^j G \|_{L^\infty(D_\va)}
+\va^{m+\sg}\|\na^m G\|_{C^{0,\sg}(D_\va)}\Big\} \quad\quad\text{for }  0<r<\va/2 .
\end{align}
Using (\ref{ptlip00}), (\ref{ptlip04}) and (\ref{ptlip05}), we may conclude from (\ref{pco513}) that,
\begin{align*}
\|\na^m u_\va\|_{L^\infty(D_r)}&\leq C \big\{H(\va)+h(\va)\big\}\leq C \big\{H(1)+h(1)\big\}\\ &\leq C  \Big\{\Big(\fint_{D_1}|u_\va|^2 \Big)^{1/2}
+ \sum_{|\al|\leq m-1}\Big(\fint_{D_1}|f^\al|^q\Big)^{1/q} + \|G \|_{C^{m,\sg}(D_1)}\Big\}  \quad\text{for } 0\!<\!r<\!\va/2.
\end{align*}
This, together
 with the interior uniform $C^{m-1,1}$ estimate for $u_\va$ derived in \cite[Theorem 1.2]{nsx}, gives (\ref{co511}).
\end{proof}

\vspace{0.8cm}
\noindent\textbf{Acknowledgments.} This paper was completed during the authors'  visits at University of Kentucky. They are much obliged to Professor Zhongwei Shen for the guidance. Our special thanks also go to Professor Russell Brown and the Department of Mathematics for the warm hospitality and support, as well as to Dr. Jinping Zhuge for some enlightening discussions.

\vspace{0.5cm}
\noindent Weisheng Niu,\\
School of Mathematical Science, Anhui University,
Hefei, 230601, P. R. China\\~~E-mail:niuwsh@ahu.edu.cn\vspace{0.5cm}\\
\noindent Yao Xu,\\
Department of Mathematics, Nanjing University,
Nanjing, 200093, P. R. China\\~~E-mail:dg1421012@smail.nju.edu.cn

 \end{document}